\newtheorem{thm}{Theorem}
\newtheorem{conj}{Conjecture}
\newtheorem{prop}{Proposition}
\newtheorem{lem}{Lemma}
\newtheorem{rmk}{Remark}
\title{$SO(2)\times SO(3)$-invariant Ricci solitons and ancient flows on $\mathbb{S}^4$}
\author{Timothy Buttsworth}
\date{}
\begin{document}

\maketitle

\abstract{Consider the standard action of $SO(2)\times SO(3)$ on $\mathbb{R}^5=\mathbb{R}^2\oplus \mathbb{R}^3$. 
We establish the existence of a uniform constant $\mathcal{C}>0$ so that any $SO(2)\times SO(3)$-invariant Ricci soliton 
on $\mathbb{S}^4\subset \mathbb{R}^5$ with Einstein constant $1$ must have Riemann curvature and volume bounded by 
$\mathcal{C}$, and injectivity radius bounded below by $\frac{1}{\mathcal{C}}$. 
This observation, coupled with basic numerics, gives 
strong evidence to suggest that the only $SO(2)\times SO(3)$-invariant Ricci solitons on $\mathbb{S}^4$ are round.
We also encounter the so-called `pancake' ancient solution of the Ricci flow.}

\section{Introduction}
Let $M$ be a smooth manifold. A \textit{gradient Ricci soliton} on $M$ is a Riemannian metric $g$ for which there exists a constant $\lambda$ and a smooth function $u:M\to 
\mathbb{R}$ so that
\begin{equation}\label{GRS}
 \text{Ric} (g)+Hess_g(u)=\lambda g.
\end{equation}
Solutions of \eqref{GRS} arise as self-similar solutions to the well-known Ricci flow 
\begin{align}\label{RF}
 \frac{\partial g}{\partial t}=-2\text{Ric}(g). 
\end{align}
In the quest for new solutions to \eqref{GRS}, one often assumes that $g$ and $u$ are invariant under a certain group action $G$ of $M$. 
It is of no use to assume that $G$ acts transitively on $M$, because then $g$ is homogeneous and $u$ is constant, so $g$ must be Einstein. 
Finding homogeneous Einstein metrics is its own well-studied problem; perhaps the crowning achievement in this area of study is the work in \cite{BWZ}, where the authors 
show that the mountain pass theorem is quite generally applicable 
in the construction of compact homogeneous Einstein metrics. The classification of non-compact homogeneous Einstein metrics is the subject of the long-standing 
Alekseevskii conjecture (see Conjecture 7.57 of \cite{Besse}). 

After assuming that $G$ acts transitively, the next natural step is 
assuming that $G$ acts with \textit{cohomogeneity one}, which means that the generic orbits of the action of $G$ in $M$ have dimension one less than that of the manifold. 
Several examples of gradient Ricci solitons have been constructed using cohomogeneity one invariance (perhaps the most notable examples are \cite{Bohm98} and \cite{DancerWang}).
In this paper, 
we consider the problem of solving \eqref{GRS} on $\mathbb{S}^4$ with $\lambda=1$ 
for a pair $(g,u)$ which is invariant under the usual cohomogeneity one 
action of $SO(2)\times SO(3)$.
We first show a compactness result for solutions of this problem. 
\begin{thm}\label{CT}
 There exists a $\mathcal{C}>0$ so that any $SO(2)\times SO(3)$-invariant solution $(g,u)$ of \eqref{GRS} (with $\lambda=1$) on $\mathbb{S}^4$
 has $\text{inj}_g\ge \frac{1}{\mathcal{C}}$, $\text{vol}(g)\le \mathcal{C}$, and Riemann curvature bounded pointwise by $\mathcal{C}$. 
\end{thm}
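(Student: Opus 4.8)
The plan is to use the cohomogeneity-one structure to turn \eqref{GRS} into an autonomous system of ODEs and then extract the three estimates from a careful analysis of that system. First I would fix a unit-speed geodesic meeting the principal orbits orthogonally and write any invariant metric as $g = dt^2 + f(t)^2\, g_{S^1} + h(t)^2\, g_{S^2}$ for $t \in [0,L]$, with $g_{S^1}$ the circle of length $2\pi$, $g_{S^2}$ the round metric of curvature $1$, and $u = u(t)$. The two singular orbits impose the smoothness conditions $f(0) > 0$, $f'(0) = 0$, $h(0) = 0$, $h'(0) = 1$, $u'(0) = 0$ at the collapsed circle and $h(L) > 0$, $h'(L) = 0$, $f(L) = 0$, $f'(L) = -1$, $u'(L) = 0$ at the collapsed $S^2$ (plus vanishing of the odd normal derivatives forced by the slice representations $D^3$ and $D^2$). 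In these variables \eqref{GRS} with $\lambda = 1$ becomes an \emph{autonomous} system, which in the logarithmic variables $x = f'/f$, $y = h'/h$, $z = u'$ closes up as a first-order system in $(h,x,y,z)$ and is readily checked to have no stationary point in the region $h > 0$. I would also record the traced equation $R + \Delta_g u = 4$ (so $\int_{\s^4} R\, dV_g = 4\,\vol(g)$), Hamilton's first integral $R + |\nabla u|^2 - 2u \equiv c$ (normalising the additive constant in $u$ so that $c = 0$, whence $u \ge 0$ and $|\nabla\sqrt u| \le 1/\sqrt 2$ wherever $u>0$), and the fact that $R \ge 0$ by the maximum principle.

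\textbf{Bounding the width $L$ — the main obstacle.} The crux is a universal bound $L \le \mathcal{C}_1$; equivalently, a priori control of the potential $u$. The plan here is to mimic the treatment of cohomogeneity-one Einstein metrics: produce a monotone, scale-invariant quantity built from $x$, $y$, $z$ and $h^{-1}$ — the soliton analogue of the Gauss–Codazzi-constrained ``mean-curvature'' functions — that drives the trajectory of the $(h,x,y,z)$-system through the region of moderate data in bounded $t$-time and prevents it from escaping that region. Since the first integral expresses $u$ as an algebraic function of $(h,x,y,z)$, bounding these quantities bounds $u$ and $z = u'$, and $L \le \mathcal{C}_1$ then follows either directly from the ODE or via a Myers-type estimate for the Bakry–Émery tensor $\text{Ric}(g) + Hess_g(u) = g$. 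I expect this to be the principal difficulty: there is no soft a priori diameter bound for compact shrinking solitons available in this generality, so the symmetry must be used in an essential way, and the estimates here are entangled with the non-collapsing estimates below.

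\textbf{From $L$ to the conclusion.} Once $L \le \mathcal{C}_1$, the remainder is ODE comparison on a fixed-length interval. Integrating $|\nabla\sqrt u| \le 1/\sqrt 2$ bounds $u$ and $z$. The $S^2$-equation shows that $h \ge 1$ at every interior local maximum of $h$ and $h \le 1$ at every interior local minimum, which together with the fixed endpoint behaviour bounds $h$ from above, while the $\theta\theta$-equation bounds $f$ from above and pins $f''/f$ at the collapsed-circle end and at every interior critical point of $f$. The remaining — and most delicate — point is to bound $f$ and $h$ below by $1/\mathcal{C}$ at their interior critical points and at the two pole orbits, i.e. to rule out a thin circle-bundle neck or a small pole orbit; here I would again lean on the phase-space analysis of the $(h,x,y,z)$-system, since for example the curvature $(1-(h')^2)/h^2$ at the $S^2$-pole is exactly $1/h(L)^2$. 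Granting these lower bounds, the equations bound $f''$ and $h''$, hence all sectional curvatures $-f''/f$, $-h''/h$, $-f'h'/(fh)$, $(1 - (h')^2)/h^2$, with the potential blow-ups at the singular orbits controlled by the smoothness expansions, so $|\text{Rm}| \le \mathcal{C}$; the volume bound is then immediate from $\vol(g) = 8\pi^2 \int_0^L f h^2\, dt$. Finally, the curvature bound controls the conjugate radius from below, the lower bounds on $f$ and $h$ control the lengths of the shortest closed orbits, and Klingenberg's lemma gives $\text{inj}_g \ge 1/\mathcal{C}$, after enlarging $\mathcal{C}$ if necessary.
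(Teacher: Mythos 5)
Your high-level strategy---reduce to the cohomogeneity-one ODE system, then prove a priori bounds that give curvature, volume and injectivity-radius control---is the right shape, and the step ``bounded shooting data $\Rightarrow$ conclusion'' is close to what the paper does (the paper works with shooting parameters $(\delta_1,\delta_2,\delta_3)$ at the two singular orbits and closes by a continuity/compactness argument rather than by explicit endpoint estimates; either would serve). But the proposal has a genuine gap exactly where you flag ``the principal difficulty,'' and the mechanism the paper uses to fill it is absent from your sketch.

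Concretely, the hardest a priori bound in the paper is not the width $L$ per se but the parameter $\delta_1$ governing the metric near the collapsing-circle orbit. The paper's argument for bounding $\delta_1$ is: if $\delta_1$ is too large, a careful two-stage ODE comparison---first against the Bryant soliton on $\mathbb{R}^3$ (Theorem~\ref{d1large}, using the appendix), then against the Gaussian shrinker on $\mathbb{R}^2\times\mathbb{S}^2$ (Theorem~\ref{S2orbit})---forces \emph{all} eigenvalues of the Riemann curvature operator to be nonnegative along the whole interval; but then Hamilton's $4$-manifold pinching theorem \cite{Hamilton86} forces the soliton to be round, contradicting the size of $\delta_1$. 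This ``large parameter $\Rightarrow$ nonnegative curvature operator $\Rightarrow$ round by Hamilton'' argument is the heart of the proof and is not present in your proposal. The alternative you suggest in its place---a monotone, scale-invariant quantity in $(h,x,y,z)$ in analogy with cohomogeneity-one Einstein metrics---is not substantiated, and there is no indication that such a quantity exists; note that even the most natural monotone quantity in the paper, $\xi=L_1+2L_2-u'$, is only used to normalize the matching orbit and does not by itself bound the trajectory. Similarly, the lower bounds on the pole sizes that you identify as needed (e.g.\ $h(L)\ge 1/\mathcal{C}$, equivalently the paper's $\delta_3\le 40$ in Theorem~\ref{delta3estimate}) and the bound on $\delta_2$ are obtained in the paper from specific ODE comparison estimates built around the two nonnegative curvature eigenvalues supplied by Propositions~\ref{scf1} and~\ref{sf2p}; you observe that $R\ge 0$, which is much weaker than the componentwise curvature positivity the paper exploits. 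As written, the proposal identifies the obstacles correctly but does not supply the arguments that overcome them, and the one substitute idea offered (a monotone quantity) is not the paper's route and is not shown to work.
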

Theorem \ref{CT} 
forms part of a well-established area of study which seeks to produce various types of compactness results for spaces of gradient shrinking 
Ricci solitons, especially on four-dimensional manifolds. The strongest general result to date appears 
to be Theorem 1.1 in \cite{HaslMuller}, which shows compactness in the orbifold sense (with the pointed Cheeger-Gromov topology), 
but only once a uniform lower bound on the Perelman entropy is known. The proof of this orbifold compactness result
essentially boils down to establishing a uniform $L^2$ norm on the Riemann curvature, 
rather than the stronger uniform $L^{\infty}$ bounds we establish with Theorem \ref{CT}. 

We hope that Theorem \ref{CT} brings us a step closer to actually determining \textit{uniqueness} of invariant Ricci solitons. 
\begin{conj}\label{ClT}
 Any $SO(2)\times SO(3)$-invariant solution of \eqref{GRS} on $\mathbb{S}^4$ with $\lambda=1$ is the round sphere, up to diffeomorphism. 
\end{conj}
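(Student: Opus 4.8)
The plan is to attack Conjecture~\ref{ClT} via a shooting analysis for the cohomogeneity-one reduction, using Theorem~\ref{CT} to supply the a priori control that confines the search to a compact parameter window. Write an invariant metric on the regular part as $g = dt^2 + f(t)^2\,g_{\s^1} + h(t)^2\,g_{\s^2}$ with $u=u(t)$, where $t$ runs over the orbit-space interval; the $\s^2$-fibre collapses at one endpoint (the $\s^1$ singular orbit) and the $\s^1$-fibre at the other (the $\s^2$ singular orbit). The standard smoothness (Eschenburg--Wang) conditions are, at the $\s^1$-orbit end, $h\to 0$, $\dot h\to 1$, $\dot f\to 0$, $\dot u\to 0$ with $f\to f_0>0$, and, at the $\s^2$-orbit end, $f\to 0$, $\dot f\to -1$, $\dot h\to 0$, $\dot u\to 0$ with $h\to h_L>0$. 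Substituting into \eqref{GRS} with $\lambda=1$ gives an ODE system for $(f,h,u)$ which, as is standard in cohomogeneity-one problems, carries a first-order constraint preserved along solutions (the $tt$-component being recovered from the orbital components through the contracted Bianchi identity). Since the $\mathrm{Hess}$-term sees $u$ only up to an additive constant, normalise $u\equiv 0$ at the $\s^1$-orbit; then the solutions smooth there form a one-parameter family indexed by $f_0$, and smooth closure at an $\s^2$-orbit is, after accounting for the constraint, a finite system of equations on finitely many parameters. The round sphere of radius $\sqrt 3$ with $u\equiv 0$ is one solution; the claim is that it is the only one.

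Next I would study the shooting map $f_0 \mapsto \mathcal{D}(f_0)$, where $\mathcal D$ records the defect from smooth closure at the first zero of $f$ for the solution integrated from the $\s^1$-orbit end. Two ingredients are needed. First, \emph{local rigidity at the round metric}: show $\mathcal D$ has a nonzero derivative there, equivalently that the linearised soliton operator on the round $\s^4$ admits no nonzero $SO(2)\times SO(3)$-invariant Jacobi field meeting both smoothness conditions; this should reduce to an explicit ODE/spherical-harmonic computation and be genuinely doable. Second, \emph{global control of $\mathcal D$}: one wants it to vanish only at the round value of $f_0$. Here I would exploit the structure of shrinking solitons — Hamilton's identity $R + |\nabla u|^2 - 2u \equiv \mathrm{const}$, which at the two singular orbits (where $\nabla u=0$) forces an algebraic relation between $f_0$, $h_L$ and the endpoint values of $u$; the positivity $R>0$ and the integral identity $\int_{\s^4} R\, d\vol_g = 4\,\vol(g)$; and monotonicity of suitable quantities along $t$ — together with the behaviour of the initial-value problem at the ends of the $f_0$-range. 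Crucially, Theorem~\ref{CT} already forces any genuine soliton to have $f_0$ in a fixed compact interval bounded away from $0$ and $\infty$ (since $f_0\to 0$ or $f_0\to\infty$ would violate the injectivity-radius or volume bound), so only a bounded window of $f_0$ must be examined, and inside it one must show $\mathcal D$ changes sign at most once.

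The main obstacle is precisely this last global step: proving that $\mathcal D$ has a unique zero is exactly where the present paper defers to numerics, and a rigorous argument would seem to require one of (i) a monotonicity formula for the cohomogeneity-one soliton system not yet available, (ii) a computer-assisted proof validating the numerical shooting with rigorous interval arithmetic on the compact $f_0$-window furnished by Theorem~\ref{CT}, or (iii) a global variational input. For (iii) the natural vehicle is Perelman's $\nu$-entropy restricted to the $SO(2)\times SO(3)$-invariant metrics: the invariant shrinking solitons are exactly its critical points, the compactness of Theorem~\ref{CT} provides the Palais--Smale-type behaviour needed to run a critical-point argument, and one would try to show, in the spirit of the mountain-pass analysis of \cite{BWZ} for homogeneous Einstein metrics, that the round sphere is the \emph{only} critical point on this invariant slice. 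Carrying any one of these routes to completion is the content of Conjecture~\ref{ClT}, which for now remains open.
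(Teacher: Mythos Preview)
This statement is a \emph{conjecture}, and the paper does not prove it. Section~\ref{mainproof} only sketches a programme: cut out a quantitative inverse-function-theorem neighbourhood of the round metric, then numerically sweep the explicit compact box $\delta_1\in[0,10^{20000}]$, $\delta_2\in[-1,10^{20000}]$, $\delta_3\in[0,40]$ supplied by Theorem~\ref{deltaboundedsummary}; the paper explicitly declines to execute this because the box is astronomically large. Your proposal is likewise a strategy rather than a proof, and you say so in your closing sentence, so there is no claimed proof to evaluate --- only plans to compare.

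Your plan differs from the paper's in two structural ways. The paper shoots from \emph{both} singular orbits toward the unique principal orbit where $\xi=L_1+2L_2-u'=0$, and matches a curve $C$ (one parameter $\delta_1$ from the $\mathbb{S}^1$ end) against a surface $S$ (two parameters $\delta_2,\delta_3$ from the $\mathbb{S}^2$ end) in $\mathbb{R}^3$; this is more robust than your one-sided shooting from the $\mathbb{S}^1$ end all the way to a first zero of $f$, because $\xi$ is strictly decreasing and is guaranteed to hit zero, whereas existence of your one-sided trajectory up to a zero of $f$ is not a priori assured and would itself need an argument. Relatedly, the paper's natural parameter at the $\mathbb{S}^1$ end is $\delta_1=\lim_{t\to0}(R(t)-\tfrac1t)/t$, not $f_0=f_1(0)$: in the reduced variables $(\xi,L_1,L_2,R)$ the scale $f_0$ is invisible (only $L_1=f_1'/f_1$ appears) and is recovered a posteriori from the smoothness condition $f_1'(T)=-1$ at the far end. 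Your route (iii), via Perelman's $\nu$-entropy restricted to the invariant slice, is a genuinely different idea the paper does not raise; but showing the round sphere is the unique critical point on that slice is essentially the conjecture restated, so without an additional structural input it does not obviously buy anything over the shooting picture. The honest gap you name --- no monotonicity formula, no validated interval-arithmetic sweep of the huge parameter box, no variational uniqueness --- is exactly the gap the paper leaves open.
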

The prospects of verifying this conjecture numerically are discussed in Section \ref{mainproof}. 

In the course of proving Theorem \ref{CT}, we discover that our notion of compactness is not very rigid, in the sense 
that we find a sequence of `almost' Ricci solitons with unbounded Riemann curvature. These `almost' solitons are an interpolation between 
the Gaussian shrinker on $\mathbb{R}^2\times \mathbb{S}^2$ and a rescaled product of the Bryant soliton on $\mathbb{R}^3$ with a flat metric on $\mathbb{S}^1$. The only 
reason we can conclude that these are \text{not} 
Ricci solitons is that these metrics have non-negative Riemann curvature; if these metrics were solitons, they would be round by 
Hamilton's pinching results in \cite{Hamilton86}. However, it 
turns out that this `pancake' shape is a 
$\kappa$-noncollapsed ancient Ricci flow on $\mathbb{S}^4$ with positive Riemann curvature. 
\begin{thm}\label{NAS}
 There exists a $\kappa>0$ and a $\kappa$-noncollapsed, non-round $SO(2)\times SO(3)$-invariant ancient solution to the Ricci flow  on $\mathbb{S}^4$ 
 with positive Riemann curvature operator. 
\end{thm}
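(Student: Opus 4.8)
The plan is to construct the ancient solution as a limit of Ricci flows on $\mathbb{S}^4$ emanating from initial data that degenerates in a controlled way, following the well-known strategy (used e.g. by Perelman and by Bryant, and systematized in the cohomogeneity-one setting) of running the flow backward from a family of ``barely collapsed'' metrics and extracting a subsequential limit. Concretely, I would parametrize $SO(2)\times SO(3)$-invariant metrics on $\mathbb{S}^4$ by two warping functions $\varphi_1, \varphi_2$ over an interval $[0,L]$ (controlling the sizes of the $\mathbb{S}^1$ and $\mathbb{S}^2$ orbits respectively, with the appropriate smoothness/boundary conditions at the two singular orbits), and then consider, for each large $k$, the solution $g_k(t)$ of \eqref{RF} with $g_k(0)$ chosen so that the $\mathbb{S}^1$ direction is very small relative to the $\mathbb{S}^2$ direction --- morally, a thin ``pancake.'' The key input is that each $g_k(0)$ can be chosen to have \emph{positive Riemann curvature operator}, which by Hamilton's results persists under the flow, and which gives, via Hamilton--Ivey--type estimates in dimension four together with $\kappa$-noncollapsing (Perelman), uniform local curvature and injectivity-radius control on a time interval $[-T_k, 0]$ with $T_k \to \infty$.

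The steps, in order: (1) Write down the ODE system for $(\varphi_1,\varphi_2)$ under Ricci flow and identify a one-parameter family of initial metrics $g_k(0)$, normalized (say) to have maximal sectional curvature $1$ at $t=0$, with positive curvature operator and with ``pancake'' geometry; verify the smoothness conditions at the singular orbits are preserved. (2) Show these initial metrics are $\kappa$-noncollapsed at scale $1$ for a $\kappa$ independent of $k$ --- this should follow from an explicit volume-lower-bound computation for the pancake profile, or alternatively from Perelman's no-local-collapsing applied with a $k$-independent entropy bound, using that the initial metrics interpolate between two fixed model geometries. (3) Use positivity of the curvature operator plus the normalization to get, for each $k$, a uniform curvature bound on a backward time interval whose length $T_k$ tends to infinity (one expects $T_k \to \infty$ because the ``distance'' of $g_k(0)$ from the round metric, measured in flow time, grows with $k$); combine with $\kappa$-noncollapsing and Shi's derivative estimates. (4) Apply Hamilton's compactness theorem to extract a subsequential limit flow $g_\infty(t)$ defined for all $t \in (-\infty, 0]$; it is $SO(2)\times SO(3)$-invariant, $\kappa$-noncollapsed, and has nonnegative curvature operator. (5) Rule out that $g_\infty$ is round: here one uses that the pancake profile is quantitatively far from round at $t=0$ (e.g., its ratio $\sup\varphi_2 / \sup\varphi_1$ is large, a scale-invariant quantity that passes to the limit), so $g_\infty(0)$ cannot be the round sphere; then upgrade nonnegative to strictly positive curvature operator by the strong maximum principle / Hamilton's rigidity (a compact shrinking soliton with nonnegative curvature operator that is not flat and not round would contradict the classification, but more directly, if the curvature operator had a kernel it would split off a flat factor locally, impossible on the compact orbit structure here).

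The main obstacle I expect is Step (3): establishing that the uniform curvature bound persists on a backward time interval of length $T_k \to \infty$, rather than shrinking. Positivity of the curvature operator and the normalization give an \emph{a priori} curvature bound only after one knows the flow exists and does not develop a singularity going backward; one must show the ``pancake'' does not instantaneously round up or pinch off as $t$ decreases. The natural way to do this is a direct analysis of the reduced ODE system for $(\varphi_1, \varphi_2)$ --- showing that backward in time the solution limits (after rescaling) onto the claimed model, the product of the Bryant soliton on $\mathbb{R}^3$ with a flat $\mathbb{S}^1$, on one end and the Gaussian shrinker on $\mathbb{R}^2 \times \mathbb{S}^2$ on the other --- which provides the needed barriers and long-time curvature control. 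A secondary technical point is verifying uniform $\kappa$-noncollapsing in Step (2) independently of $k$; if a uniform entropy bound is awkward to obtain directly, one can instead use the explicit cohomogeneity-one structure to bound volumes of metric balls from below by hand.
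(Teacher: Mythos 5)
Your overall strategy is the same as the paper's (flow a sequence of explicit SO(2)$\times$SO(3)-invariant ``pancake'' metrics with nonnegative curvature operator, establish uniform $\kappa$-noncollapsing, and take a compactness limit), but there are two genuine gaps that your proposal does not close. The first is the time indexing: you place the pancake at $t=0$ and then ask for curvature bounds on $[-T_k,0]$, which amounts to running Ricci flow \emph{backward} from prescribed data --- an ill-posed problem. The paper avoids this entirely by running \emph{forward} from the pancake; since the curvature operator is nonnegative, Hamilton's theorem guarantees the forward flow becomes singular in finite time $T_L$ with a round-sphere singularity model. One then rescales in space and time so that the singular time is $0$ and the time at which the curvature pinching ratio is $1+\delta$ is $-1$; in these coordinates the pancake slice sits at a time $t_0(L)$, and the goal becomes showing $t_0(L)\to -\infty$. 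You correctly flag this step as the main obstacle, but your proposed fix (ODE barriers showing backward-in-time limiting onto the Bryant$\times$flat and Gaussian models) is not what is needed and would be substantially harder. The paper's actual mechanism is a diameter estimate: from the pinching at $t=-1$ one gets $\hat{S}(-1)\le 3$; Hamilton's trace Harnack inequality then gives an upper bound on $\hat{S}$ for $t\le -1$; this bounds the rate at which distances can shrink; Myers' theorem bounds the diameter at $t=-1$; but the (rescaled) pancake has diameter comparable to $L/\sqrt{T_L-t_L}$. Integrating the distance-shrinkage bound over $[t_0(L),-1]$ forces $T_L-t_L\to 0$ and hence $t_0(L)\to-\infty$. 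This chain of estimates is the essential input your proposal is missing.

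The second gap is the non-roundness argument. You suggest passing a scale-invariant aspect ratio of the pancake to the limit, but in the rescaled flows the pancake slice is at $t_0(L)\to-\infty$, so it does not survive as a fixed time slice of the limiting flow; the aspect ratio there tells you nothing about the limit at a fixed finite time. The paper's normalization --- pinching ratio exactly $1+\delta$ at the fixed rescaled time $t=-1$ --- is what passes to the limit and certifies that the limiting ancient flow is not round. (Your step on promoting nonnegative to strictly positive curvature operator by strong maximum principle, and your step 2 on explicit $\kappa$-noncollapsing via decomposition into pieces isometric to $\mathbb{R}^2\times\mathbb{S}^2$ and a warped $\mathbb{S}^1\times\mathbb{S}^3$, do essentially match what the paper does.)
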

\section*{Acknowledgements}
I would like to thank David Buttsworth for guidance in implementing some of the numerical aspects of this project. 
I would also like to thank Max Hallgren, Mat Langford, Jason Lotay 
and Yongjia Zhang for several useful conversations regarding the `pancake' ancient solution of the Ricci flow. 
\section{Preliminaries}
In case the metric $g$ and the function $u$ are invariant under a certain cohomogeneity one action, the Ricci soliton equation \eqref{GRS} reduces 
to a system of ordinary differential equations. In this section, we discuss these equations in the case that our cohomogeneity one action 
is $SO(2)\times SO(3)$, and we provide some initial results on their solutions using the maximum principle. 
It turns out that all of the material in this section applies to solitons on both $\mathbb{S}^4$ and $\mathbb{S}^2\times \mathbb{S}^2$ which 
are invariant under the obvious action of $SO(2)\times SO(3)$, so for 
this section only, we consider both of these four-dimensional manifolds. 
\subsection{The boundary value problem}
Under the action of $SO(2)\times SO(3)$, the principal orbits of $\mathbb{S}^4$ or $\mathbb{S}^2\times \mathbb{S}^2$ 
are product spheres $\mathbb{S}^1\times \mathbb{S}^2$. In the case of $\mathbb{S}^4$, the two singular orbits 
are one copy of $\mathbb{S}^1$ and one copy of $\mathbb{S}^2$, whereas for $\mathbb{S}^2\times \mathbb{S}^2$, both singular orbits are copies of $\mathbb{S}^2$. 
Up to diffeomorphism, any $SO(2)\times SO(3)$-invariant Riemannian metric on $\mathbb{S}^4$ or $\mathbb{S}^2\times \mathbb{S}^2$ has the form 
\begin{equation}\label{metricform}
 dt^2+f_1^2 d\theta^2+f_2^2Q,
\end{equation}
 where $f_i:(0,T)\to \mathbb{R}$ are smooth and positive functions, $d\theta$ is the standard one-form on $\mathbb{S}^1$, and 
 $Q$ is the round metric of unit Ricci curvature on $\mathbb{S}^2$. In order for the Riemannian metric $g$ to close up smoothly at 
 the singular orbits, the functions $f_1$ and $f_2$ must be smoothly extendable to functions on $(-T,2T)$ so that
\begin{align}\label{metricsmoothness}
\begin{split}
 f_1(0)>0\ \text{and $f_1(t)$ is even about $t=0$},\qquad
 f_2'(0)=1\ \text{and $f_2(t)$ is odd about $t=0$},\\
f_1'(T)=-1\ \text{and $f_1(t)$ is odd about $t=T$},\qquad
f_2(T)>0\ \text{and $f_2(t)$ is even about $t=T$},\\
 \end{split}
\end{align}
in the case of $\mathbb{S}^4$, and so that 
\begin{align}\label{metricsmoothness'}
\begin{split}
 f_1'(0)=1\ \text{and $f_1(t)$ is odd about $t=0$},\qquad
 f_2(0)>0\ \text{and $f_2(t)$ is even about $t=0$},\\
f_1'(T)=-1\ \text{and $f_1(t)$ is odd about $t=T$},\qquad 
f_2(T)>0\ \text{and $f_2(t)$ is even about $t=T$},\\
 \end{split}
\end{align}
in the case of $\mathbb{S}^2\times \mathbb{S}^2$. 
At any point in a principal orbit, let $\{e_1,e_2,e_3\}$ be an orthonormal basis adapted to the standard product metric $d\theta^2+Q$ on $\mathbb{S}^1\times \mathbb{S}^2$. 
Then in the basis 
$\{\partial_t\wedge e_1,\partial_t\wedge e_2,\partial_t\wedge e_3,e_2\wedge e_3,e_1\wedge e_2, e_1\wedge e_3\}$, we compute (cf. \cite{GroveZiller}) 
the Riemann curvature operator:
\begin{align}\label{riemanncurvatureoperator}
 \mathcal{R}=\begin{pmatrix}
              -\frac{f_1''}{f_1}&0&0&0&0&0\\
              0&-\frac{f_2''}{f_2}&0&0&0&0\\
              0&0&-\frac{f_2''}{f_2}&0&0&0\\
              0&0&0&\frac{1-f_2'^2}{f_2^2}&0&0\\
              0&0&0&0&-\frac{f_1'f_2'}{f_1f_2}&0\\
              0&0&0&0&0&-\frac{f_1'f_2'}{f_1f_2}
             \end{pmatrix}.
\end{align}
Suppose we have an $SO(2)\times SO(3)$-invariant metric of the form \eqref{metricform} which is a gradient shrinking Ricci soliton with an 
$SO(2)\times SO(3)$-invariant potential function $u$. 
Then $u$ depends only on the $t$ parameter, and can be smoothly extended to a function on $(-T,2T)$ so that 
\begin{align}\label{usmoothness}
 u(t) \ \text{is even about both $t=0$ and $t=T$},
\end{align}
and the functions $(f_1,f_2,u)$ satisfy \eqref{metricsmoothness} or \eqref{metricsmoothness'}, alongside 
the Ricci soliton equation \eqref{GRS} on $(0,T)$ which becomes
\begin{align}\label{solitonequations}
\begin{split}
 -\frac{f_1''}{f_1}-2\frac{f_2''}{f_2}+u''&=\lambda,\\
 -\frac{f_1''}{f_1}-2\frac{f_1'f_2'}{f_1f_2}+u'\frac{f_1'}{f_1}&=\lambda,\\
 -\frac{f_2''}{f_2}-\frac{f_1'f_2'}{f_1f_2}+\frac{1-f_2'^2}{f_2^2}+u'\frac{f_2'}{f_2}&=\lambda. 
 \end{split}
\end{align}
We find it useful to set $\lambda=1$ and introduce the new functions $L_i=\frac{f_i'}{f_i}$, $\xi=L_1+2L_2-u'$, $R=\frac{1}{f_2}$, so the 
Ricci soliton equations in these variables are 
\begin{align}\label{equationsnewf}
\begin{split}
 \xi'=-L_1^2-2L_2^2-1,\\
 L_1'=-\xi L_1-1,\\
 L_2'=-\xi L_2+R^2-1,  \\
 R'=-L_2 R.
 \end{split}
\end{align}
Using the boundary conditions, a solution of \eqref{equationsnewf} on $(0,T)$ uniquely determines a solution of \eqref{solitonequations}. 
Note that in these co-ordinates, the Riemann curvature eigenvalues for a Ricci soliton are:
\begin{itemize}
 \item $\xi L_1+1-L_1^2$ (we know this must be non-negative for a soliton by Proposition \ref{scf1} below);
 \item $R^2-L_2^2$ (we know this must be non-negative for a soliton by Proposition \ref{sf2p} below);
 \item $-L_1L_2$ (occurs twice); and 
 \item $\xi L_2+1-R^2-L_2^2$ (occurs twice). 
\end{itemize}

\subsection{An initial step towards compactness: the maximum principle}
If we were lucky enough to know \textit{a priori} that any $SO(2)\times SO(3)$-invariant shrinking soliton on $\mathbb{S}^4$ had to 
have non-negative Riemann curvature operator, then Hamilton's pinching results for $4$-manifolds under the Ricci flow 
in \cite{Hamilton86} would prove Conjecture \ref{ClT} in the affirmative 
immediately (and would therefore give us the proof of Theorem \ref{CT} as well). Although there does not seem to be a way to cheaply show that our Ricci solitons must 
have non-negative Riemann curvature, 
a relatively simple application of the maximum principle does show that at least two of the six Riemann curvature eigenvalues must be non-negative everywhere. 
Since this observation is straightforward, and is frequently used in the remainder of the paper, we include the proofs in this preliminary section. 
\begin{prop}\label{scf1}
 For any $SO(2)\times SO(3)$-invariant gradient shrinking Ricci soliton $(g,u)$ on $\mathbb{S}^4$ or $\mathbb{S}^2\times \mathbb{S}^2$ 
 of the form \eqref{metricform}, 
 the quantity $\frac{-f_1''}{f_1}$ is non-negative everywhere. 
\end{prop}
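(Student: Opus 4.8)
The plan is to use the maximum principle on the quantity $-f_1''/f_1$, which by \eqref{riemanncurvatureoperator} is one of the sectional curvatures and (via the notation of \eqref{equationsnewf}) equals $\xi L_1 + 1 - L_1^2$. First I would derive an ODE satisfied by this quantity along the one-dimensional soliton. Differentiating $L_1' = -\xi L_1 - 1$ and using $\xi' = -L_1^2 - 2L_2^2 - 1$ gives an expression for $(\xi L_1 + 1 - L_1^2)'$; the goal is to show that at an interior minimum of this quantity (where its derivative vanishes), if the minimum value were negative we would reach a contradiction. So I would set $w = \xi L_1 + 1 - L_1^2 = -f_1''/f_1$ and compute $w'$ in terms of $w$, $L_1$, $L_2$, $R$, hoping for something of the shape $w' = (\text{something})\cdot w + (\text{manifestly nonnegative terms})$, i.e. a linear ODE inequality $w' \ge a(t) w$ with continuous coefficient $a$.

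Next I would handle the endpoints. Because of the smoothness conditions \eqref{metricsmoothness}/\eqref{metricsmoothness'}, the function $-f_1''/f_1$ extends smoothly (as an even function) across $t=0$ and $t=T$; in particular it is a smooth function on the closed interval $[0,T]$ (the apparent singularities where $f_1$ vanishes are removable, since near such a point $f_1$ behaves like $c\,\mathrm{dist}$ and $f_1''/f_1$ has a finite limit equal to $-$(sectional curvature), which is smooth). So $w$ attains its minimum on $[0,T]$. If the minimum is attained at an interior point $t_0$, then $w'(t_0)=0$ and $w(t_0)\le 0$ would, via the differential inequality, force a sign contradiction unless $w(t_0)\ge 0$. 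If the minimum is attained at an endpoint, I would use that $w$ is even there, so $w'$ also vanishes at the endpoint, and the same argument applies — alternatively, one can read off the value of $w$ at the endpoints directly from the smoothness conditions and the closed-manifold geometry and check it is nonnegative there.

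Concretely, the cleanest route is probably: by the soliton equations, $-f_1''/f_1 = \xi L_1 + 1 - L_1^2$, and a direct computation (differentiating and substituting \eqref{equationsnewf}) should yield
\begin{align*}
\left(\xi L_1 + 1 - L_1^2\right)' = -\xi\left(\xi L_1 + 1 - L_1^2\right) + (\text{nonnegative remainder}),
\end{align*}
so that $w$ satisfies $w' + \xi w \ge 0$, i.e. $(e^{\int \xi}\, w)' \ge 0$. Since $w$ is continuous on $[0,T]$, even about the endpoints, and (by the remark on Riemann curvature eigenvalues, or by a direct limit computation) $w$ is nonnegative at $t=0$ and $t=T$ — for instance at the $\mathbb{S}^2$ singular orbit of $\mathbb{S}^4$ one has $f_1(T)=0$, $f_1'(T)=-1$, and the corresponding curvature limit is controlled — a standard ODE/maximum-principle argument then propagates nonnegativity to all of $(0,T)$.

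The main obstacle I anticipate is the algebra of verifying that the remainder term in the ODE for $w$ is genuinely nonnegative; this is where the specific structure of the $SO(2)\times SO(3)$ equations must be exploited, and one may need to also invoke $\xi' \le 0$ (immediate from \eqref{equationsnewf}) or the sign of $\xi$ near the endpoints (which follows from the boundary behaviour of $u$, since $\xi = L_1 + 2L_2 - u'$ and $u$ is even at the endpoints). A secondary subtlety is making the endpoint analysis rigorous: one must confirm both that $w$ extends smoothly across $t=0,T$ and that its boundary values are nonnegative, rather than merely that the ODE is well-behaved in the interior.
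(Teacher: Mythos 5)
Your approach is genuinely different from the paper's. The paper applies Hamilton's evolution inequalities for the selfdual/anti-selfdual blocks $A$, $B$ of the curvature operator on the compact $4$-manifold and invokes Hamilton's \emph{parabolic} maximum principle, together with the fact that a shrinking soliton's curvature only scales under the flow; it never touches the $1$-D ODE directly. Your plan instead runs a one-dimensional ODE maximum principle on $(0,T)$. For the record, the ODE you anticipate does close up cleanly: with $w=\xi L_1+1-L_1^2=-f_1''/f_1$ one computes from \eqref{equationsnewf}
\begin{equation*}
 w'=(L_1-\xi)\,w-2L_1L_2^2,
\end{equation*}
equivalently $w'=-\xi w+L_1(w-2L_2^2)$. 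The remainder $-2L_1L_2^2$ is \emph{not} manifestly nonnegative — its sign is that of $-L_1$ — and this is precisely where the plan runs into trouble.

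There is a genuine gap in the endpoint step. At the $\mathbb{S}^2$ singular orbit one has $f_1(t)=(T-t)-a(T-t)^3+O((T-t)^5)$ and hence $w\to 6a$, where $a$ is the third-order Taylor coefficient of $f_1$ at $T$ and has no a priori sign. Equivalently, $\lim_{t\to T}w(t)=\tfrac{3}{2}(\delta_2+1)$, and the inequality $\delta_2\ge -1$ is exactly what the paper \emph{derives from} Proposition~\ref{scf1} in Proposition~\ref{initialbounds}. Assuming the endpoint value is "controlled" therefore makes the argument circular. The same problem occurs at both endpoints for $\mathbb{S}^2\times\mathbb{S}^2$, where there the additional issue is that $L_1>0$ near $t=0$, so the remainder $-2L_1L_2^2$ is negative there and the hoped-for one-sided inequality $w'\ge (L_1-\xi)w$ fails on a whole interval. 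Also, the step "if the minimum is attained at an endpoint, $w'$ vanishes there and the same argument applies" does not give a contradiction on its own, since the linear coefficient $L_1-\xi$ tends to $0$ at the $\mathbb{S}^2$ orbit.

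The approach can nevertheless be salvaged, but only by changing the base point. On $\mathbb{S}^4$ one has $L_1(0)=0$, and since $L_1'=-\xi L_1-1=-1<0$ whenever $L_1=0$, in fact $L_1\le 0$ on all of $(0,T)$; thus $-2L_1L_2^2\ge 0$, and one can start the Gronwall propagation at $t=0$, where $w(0)=2\eta_1'(0)+1=\tfrac13>0$ is forced by the smoothness conditions (not assumed). On $\mathbb{S}^2\times\mathbb{S}^2$, $L_1$ changes sign exactly once, at an interior time $t_1$ where $w(t_1)=1>0$; propagate forward from $t_1$ using $-2L_1L_2^2\ge 0$ on $(t_1,T)$, and backward from $t_1$ using $-2L_1L_2^2\le 0$ on $(0,t_1)$ (the sign of the remainder flips, and so does the direction of propagation, so the two-sided argument still closes). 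That modification repairs the logic without circularity, but it is not what you wrote, and what you wrote — relying on a priori nonnegativity of $w$ at the $\mathbb{S}^2$ singular orbits — does not stand as is.
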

\begin{proof}
For a given point on a principal orbit, consider the selfdual/anti-selfdual basis 
\begin{align*}
 \{\partial_t\wedge e_1+e_2\wedge e_3,\partial_t\wedge e_2+e_1\wedge e_3,e_3\wedge \partial_t+e_1\wedge e_2,
 \partial_t\wedge e_1-e_2\wedge e_3,\partial_t\wedge e_2-e_1\wedge e_3,e_3\wedge \partial_t-e_1\wedge e_2\}.
\end{align*}
 In this basis, the Riemann curvature operator is given by 
 \begin{align*}
  \mathcal{R}=\begin{pmatrix}
               A & B\\
               B^T &A
              \end{pmatrix}
 \end{align*}
 where 
 \begin{align*}
  2A=\begin{pmatrix}
     -\frac{f_1''}{f_1}+\frac{1-(f_2')^2}{f_2^2}&0&0\\
     0&-\frac{f_2''}{f_2}-\frac{f_1'f_2'}{f_1f_2}&0\\
     0&0&-\frac{f_2''}{f_2}-\frac{f_1'f_2'}{f_1f_2}
    \end{pmatrix},
\ 
2B=
   \begin{pmatrix}
     -\frac{f_1''}{f_1}-\frac{1-(f_2')^2}{f_2^2}&0&0\\
     0&-\frac{f_2''}{f_2}+\frac{f_1'f_2'}{f_1f_2}&0\\
     0&0&-\frac{f_2''}{f_2}+\frac{f_1'f_2'}{f_1f_2}
    \end{pmatrix}.
 \end{align*} 
 After applying the Uhlenbeck trick, Hamilton shows in \cite{Hamilton86} that, under the Ricci flow, the curvatures satisfy the evolution equations 
 \begin{align*}
  \frac{\partial A}{\partial \tau}=\Delta_{g(\tau)} A+A^2+B^T B+2A^\#, \qquad \frac{\partial B}{\partial \tau}=\Delta_{g(\tau)} B+AB+BA+2B^\#.
 \end{align*}
 Since the second and third diagonal entries of both $A$ and $B$ are identical, the first entries of $A^\#$ and $B^\#$ are non-negative. 
 We therefore find that the first eigenvalue of the matrix $A+B$ must be increasing under the Ricci flow. 
 However, the Riemann curvatures of gradient shrinking Ricci solitons evolve only by diffeomorphisms and scalings under the flow, so 
 the first eigenvalue of the matrix $A+B$, which is $-\frac{f_1''}{f_1}$, 
must be non-negative everywhere. 
\end{proof}
\begin{prop}\label{sf2p}
 For any $SO(2)\times SO(3)$-invariant gradient shrinking Ricci soliton 
 on $\mathbb{S}^4$ or $\mathbb{S}^2\times \mathbb{S}^2$ of the form \eqref{metricform}, 
 the quantity $\frac{1-f_2'^2}{f_2^2}$ is non-negative everywhere. More generally, suppose we have an 
 $SO(2)\times SO(3)$-invariant Riemannian metric of the form \eqref{metricform}, as well as two points $r_1,r_2\in [0,T]$ so that:
 \begin{itemize}
  \item $\frac{1-f_2'^2}{f_2^2}\ge 0$ at $r_1,r_2$; and 
  \item the metric satisfies the Ricci soliton equations on $(r_1,r_2)$.
 \end{itemize}
Then $\frac{1-f_2'^2}{f_2^2}\ge 0$ on $(r_1,r_2)$. 
\end{prop}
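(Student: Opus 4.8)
The plan is to prove the quantity $\kappa := \frac{1-f_2'^2}{f_2^2} = R^2 - L_2^2$ stays nonnegative by deriving a differential inequality for it and invoking the maximum principle on the interval $(r_1, r_2)$. Working in the variables of \eqref{equationsnewf}, I would first compute $\kappa' = 2RR' - 2L_2 L_2'$. Substituting $R' = -L_2 R$ and $L_2' = -\xi L_2 + R^2 - 1$ from \eqref{equationsnewf}, I get
\begin{align*}
\kappa' = -2L_2 R^2 - 2L_2(-\xi L_2 + R^2 - 1) = 2\xi L_2^2 - 4L_2 R^2 + 2L_2 = 2L_2(\xi L_2 - 2R^2 + 1).
\end{align*}
This is not yet obviously of the form ``$\kappa' \ge (\text{something}) \cdot \kappa$'', so I would rewrite the bracket: $\xi L_2 - 2R^2 + 1 = (\xi L_2 + 1 - R^2 - L_2^2) - (R^2 - L_2^2) = \mu - \kappa$, where $\mu := \xi L_2 + 1 - R^2 - L_2^2$ is precisely one of the other Riemann curvature eigenvalues listed after \eqref{equationsnewf}. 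Hence $\kappa' = 2L_2(\mu - \kappa) = -2L_2 \kappa + 2L_2 \mu$.

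To close the argument I need to control the sign of the forcing term $2L_2\mu$, and this is the step I expect to be the main obstacle. The cleanest route is to observe that $\kappa$ and $\mu$ together behave like a cooperative (or at least sign-compatible) system: from \eqref{equationsnewf} one similarly computes $\mu'$ and checks it has the form $\mu' = (\text{coefficient})\cdot \kappa + (\text{coefficient})\cdot \mu$ with the off-diagonal coefficient multiplying $\kappa$ having a favorable sign, so that the pair $(\kappa, \mu)$ satisfies a linear ODE system to which the vector-valued maximum principle (preservation of the nonnegative orthant for cooperative systems, in the ODE sense along the geodesic) applies. Alternatively — and this is probably the more robust phrasing given that we already know two curvature eigenvalues are globally nonnegative — I would feed in Proposition \ref{scf1} and argue directly: the full Riemann curvature operator of a soliton evolves only by diffeomorphism and scaling under Ricci flow, so by Hamilton's maximum principle for systems applied to the ODE obtained by dropping the Laplacian in Hamilton's evolution equations for $A$ and $B$, the relevant sum of eigenvalues (here $\kappa$, which is $\frac{1-(f_2')^2}{f_2^2} = \frac{1}{f_2}((A+B)_{11} \text{ after the appropriate combination})$, cf. the selfdual/anti-selfdual block form in the proof of Proposition \ref{scf1}) is forced to stay nonnegative; the boundary hypotheses $\kappa(r_1), \kappa(r_2) \ge 0$ then handle the local version on $(r_1,r_2)$ because $\kappa$ cannot attain a negative interior minimum without violating $\kappa' = -2L_2\kappa + (\text{nonneg})$ there.

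Concretely, the argument I would write down for the local statement is the scalar maximum principle applied to $\kappa' = -2L_2\kappa + 2L_2\mu$: suppose for contradiction $\kappa < 0$ somewhere in $(r_1, r_2)$; since $\kappa \ge 0$ at both endpoints, $\kappa$ attains a negative interior minimum at some $t_0 \in (r_1, r_2)$, where $\kappa'(t_0) = 0$ and $\kappa(t_0) < 0$. I then need $2L_2(t_0)\mu(t_0) \le 0$ to reach a contradiction — but since $\kappa'(t_0)=0$ gives $2L_2\kappa = 2L_2\mu$ at $t_0$, i.e. $L_2(t_0)\mu(t_0) = L_2(t_0)\kappa(t_0)$, I would instead consider $\kappa'' (t_0) \ge 0$ or pass to the cooperative-system formulation above; this is exactly where the sign analysis of $\mu$ (equivalently, invoking that $\mu \ge 0$ globally for genuine solitons, or running the two-component system) does the work. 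The remaining cases — the global statements on $\mathbb{S}^4$ and $\mathbb{S}^2 \times \mathbb{S}^2$ — follow by taking $r_1 = 0$, $r_2 = T$ and checking from the smoothness conditions \eqref{metricsmoothness} or \eqref{metricsmoothness'} that $f_2'(0), f_2'(T) \in \{0, \pm 1\}$ make $\frac{1-(f_2')^2}{f_2^2}$ nonnegative (indeed zero or $\frac{1}{f_2(T)^2}>0$) at the singular orbits, so the hypotheses of the local statement are automatically satisfied.
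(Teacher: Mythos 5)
Your change of variables to $\kappa = R^2 - L_2^2$ takes you off the paper's path: the paper does not track $\kappa$ at all, but instead works directly with $f_2'$ and shows it cannot leave $[-1,1]$. Differentiating the third soliton equation (after clearing denominators) yields a third-order ODE for $f_2$, and at an interior maximum of $f_2'$ above $1$ one has $f_2'' = 0$, $f_2''' \le 0$, so that the only surviving terms are $-f_2''' + \frac{(f_1')^2 f_2'}{f_1^2} + \frac{f_2'(f_2'^2-1)}{f_2^2}$, each of which is strictly positive when $f_2' > 1$. This is a clean first-derivative-test contradiction with no auxiliary quantity like $\mu$ in sight.

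Your ODE $\kappa' = -2L_2\kappa + 2L_2\mu$ is computed correctly, but the argument as you write it has a genuine gap. You correctly identify that the first-order minimum principle does not close (at an interior minimum $t_0$ of $\kappa$, $\kappa'(t_0)=0$ forces $\mu(t_0) = \kappa(t_0)$ once $L_2(t_0)\neq 0$, which is the wrong sign, not a contradiction). Two of the three fixes you then gesture at are unavailable. ``Invoking that $\mu\ge 0$ globally'' would be circular: $\mu = \xi L_2 + 1 - R^2 - L_2^2$ is precisely one of the two Riemann eigenvalues that the paper explicitly does \emph{not} establish a sign for, and the sign of $\mu$ at the singular orbit is shown in Proposition \ref{initialbounds} only \emph{using} Proposition \ref{sf2p}. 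The cooperative-system route fails because the off-diagonal coefficient in your system is $2L_2$, which changes sign, so the nonnegative orthant is not an invariant region for the ODE. The Hamilton route you sketch is also not quite right: $\frac{1-(f_2')^2}{f_2^2}$ appears as an eigenvalue of $A-B$ rather than of $A+B$, and the corresponding $(A-B)^\#$-type term has indeterminate sign, so the argument of Proposition \ref{scf1} does not transfer.

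The one fix that does work — considering $\kappa''(t_0)\ge 0$ — you name but do not carry out, and it is the whole content of the lemma in this formulation. For the record, it can be made to close: at the minimum, $L_2(t_0)\neq 0$ (else $\kappa = R^2 > 0$) so $\mu(t_0)=\kappa(t_0)$, hence $\kappa''(t_0) = 2L_2\mu'(t_0)$, and substituting $\xi L_2 = 2R^2 - 1$ into the expression for $\mu'$ gives $L_2\mu' = -L_1^2 L_2^2 - 2\kappa^2 + \kappa$, which is strictly negative when $\kappa < 0$, contradicting $\kappa''(t_0)\ge 0$. So your route is salvageable, but it requires this second-order computation, which is noticeably heavier than the paper's one-line observation at a maximum of $f_2'$. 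Your handling of the boundary values is fine and matches the paper.
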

\begin{proof}
 Clearly it suffices to show that $f_2'\in [-1,1]$ on $[r_1,r_2]$. 
 Differentiating and using the equations of \eqref{solitonequations}, we find 
 \begin{align*}
  0&=-f_2'''+(-\frac{f_1'f_2'}{f_1}+\frac{1-f_2'^2}{f_2})'+(u''-\lambda)f_2'+u'f_2''\\
  &=-f_2'''+\frac{f_1'^2f_2'}{f_1^2}-\frac{f_1'f_2''}{f_1}+\frac{-2f_2'f_2''f_2+f_2'(f_2'^2-1)}{f_2^2}+2\frac{f_2''f_2'}{f_2}+u'f_2''.\\
 \end{align*}
Therefore, if $f_2'>1$ somewhere on $(r_1,r_2)$, then there must be a point in $(r_1,r_2)$ with $f_2'>1$, $f_2''=0$ and $f_2'''\le 0$. At this point, we find 
\begin{align*}
 0&=-f_2'''+\frac{f_1'^2f_2'}{f_1^2}+\frac{f_2'(f_2'^2-1)}{f_2^2}>0,
\end{align*}
a contradiction. We obtain a similar contradiction if there were a point with $f_2'<-1$. A metric which is a Ricci soliton everywhere has $f_2'\in [-1,1]$ at 
$0$ and $T$ for both $\mathbb{S}^4$ and $\mathbb{S}^2\times \mathbb{S}^2$ by \eqref{metricsmoothness} and \eqref{metricsmoothness'}, so the claim follows. 
\end{proof}
\section{The shooting problem for $\mathbb{S}^4$: a curve and a surface}
We turn to the problem of establishing Theorem \ref{CT}. 
It is convenient to cast the problem of finding solutions of \eqref{metricsmoothness}, \eqref{usmoothness} and  
 \eqref{equationsnewf} as a shooting problem: the idea is to 
study the initial value problem for \eqref{equationsnewf} around the two singular orbits at $t=0,T$, and examine how the solutions meet at a specified principal orbit. 
In particular, 
we will examine how the solutions meet at an orbit where $\xi=0$, since \eqref{equationsnewf} implies that there will be exactly one such orbit for a shrinking soliton. 

To begin, we examine more carefully the initial value problem at the $\mathbb{S}^1$ orbit ($t=0$). Note that, by \eqref{metricsmoothness} and \eqref{usmoothness}, 
there must be smooth functions $\eta_0,\eta_1,\eta_2,\eta_3$ so that 
\begin{align}\label{etaforma}
 \xi(t)=\frac{2}{t}+\eta_0(t), \qquad L_1(t)=\eta_1(t), \qquad L_2(t)=\frac{1}{t}+\eta_2(t), \qquad R(t)=\frac{1}{t}+\eta_3(t),
\end{align}
and $\eta_i(0)=0$ for all $i=0,1,2,3$. It turns out that whenever solving \eqref{equationsnewf} subject to \eqref{etaforma} close to $t=0$, solutions are uniquely determined by 
the number 
\begin{align}\label{ivpa}
 \delta_1=\eta_3'(0)=\lim_{t\to 0}\left(\frac{R(t)-\frac{1}{t}}{t}\right).
\end{align}
\begin{prop}\label{mapa}
 For each value of $\delta_1\in \mathbb{R}$, there exists a unique solution to the initial value problem of \eqref{equationsnewf}, subject to \eqref{etaforma}
 and \eqref{ivpa}. 
 The solution can be extended smoothly to a point with $\xi=0$, and $\xi^{-1}(0)$ depends smoothly on $\delta_1$. 
 For $i=0,1,2,3$, the quantity $\frac{\eta_i(t)}{t}$ depends continuously on $\delta_1$ and $t\in [0,\xi^{-1}(0)]$. 
\end{prop}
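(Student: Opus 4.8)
The plan is to treat this as a singular initial value problem at the orbit $t=0$: first desingularise \eqref{equationsnewf} using the ansatz \eqref{etaforma}, then solve the resulting singular system near $t=0$ by a contraction argument whose fixed point depends on the parameter $\delta_1$, and finally continue the solution forward to the first zero of $\xi$ using two elementary a priori estimates.

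Substituting \eqref{etaforma} into \eqref{equationsnewf} and clearing the factors of $1/t$ gives, for $\eta=(\eta_0,\eta_1,\eta_2,\eta_3)$, a system of the form
\[
 t\,\eta'(t) = -A\,\eta(t) + t\,H\bigl(t,\eta(t)\bigr),
 \qquad
 A=\begin{pmatrix} 0&0&4&0\\ 0&2&0&0\\ 1&0&2&-2\\ 0&0&1&1\end{pmatrix},
\]
where $H$ is a polynomial (hence smooth) map with $H(0,0)=(-1,-1,-1,0)^{T}$. A direct computation shows that $A$ has eigenvalues $-1$ (simple) and $2$ (multiplicity three). Hence, among solutions of the homogeneous problem $t\eta'=-A\eta$, exactly a one-parameter family — the $t^{1}$ solutions attached to the eigenvalue $-1$ — tends to $0$ as $t\to 0$, the others blowing up; the leading part $t\,H(0,0)$ of the forcing also lives at order $t^{1}$, so there is a resonance with this family, but one checks the single compatibility condition needed for a formal power-series solution to exist. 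Computing the Taylor coefficients at $t=0$ then shows that the solutions with $\eta(0)=0$ form exactly a one-parameter family, and the parameter may be taken to be $\delta_1=\eta_3'(0)$: it forces $\eta_1'(0)=-\tfrac13$, $\eta_2'(0)=-2\delta_1$, $\eta_0'(0)=8\delta_1-1$, and then all higher coefficients.

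Making this rigorous is the heart of the matter, and the step I expect to be the main obstacle. Fixing $\delta_1$, let $\eta^{(1)}(t)=t\,(8\delta_1-1,\,-\tfrac13,\,-2\delta_1,\,\delta_1)^{T}$ be the order-$t$ truncation of the formal power-series solution and seek $\eta=\eta^{(1)}+\rho$; then $\rho$ solves $t\rho'=-A\rho+\widetilde H(t,\rho)$ with $\widetilde H(t,\rho)=O(t^{2})+O(t\rho)+O(\rho^{2})$, and I would solve the equivalent integral equation
\[
 \rho(t)=\int_0^{t}\left(\frac{s}{t}\right)^{A}\frac{\widetilde H\bigl(s,\rho(s)\bigr)}{s}\,ds
\]
by the contraction mapping principle on a small ball in the Banach space of continuous $\R^{4}$-valued functions on $[0,\varepsilon]$ with the weighted norm $\|\rho\|=\sup_{0<t\le\varepsilon}t^{-\alpha}\|\rho(t)\|$, where $\alpha\in(1,2)$ is fixed strictly between the stable order $t^{1}$ and the unstable order $t^{-2}$ of the linearisation. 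Subtracting $\eta^{(1)}$ first makes $\widetilde H=O(t^{2})$, which is exactly what makes the integral converge and the weighted operator a contraction once $\varepsilon$ is small; this is where the borderline resonance is dealt with. This yields, for each $\delta_1$, a unique solution with the asymptotics \eqref{etaforma}--\eqref{ivpa}; since the operator depends smoothly on $\rho$ and on $\delta_1$, so does the fixed point, hence so do $\xi,L_1,L_2,R$, and in particular $\eta_i(t)/t$ is continuous in $(\delta_1,t)$ on $\R\times[0,\varepsilon]$. Differentiating the system and bootstrapping — equivalently, rerunning the contraction in weighted $C^{k}$ spaces — upgrades the solution to $C^{\infty}$ across $t=0$.

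It remains to continue the solution up to a point where $\xi=0$ and to control that point. Near $t=0$ we have $\xi\sim 2/t>0$, while \eqref{equationsnewf} gives $\xi'=-L_1^{2}-2L_2^{2}-1\le -1$, so $\xi$ decreases at rate at least one. On the maximal interval of existence either $\xi$ attains the value $0$ (at a unique point, since $\xi$ is strictly decreasing), or $\xi>0$ on a finite-length interval $(0,t_{\max})$; but in the second case, on $\{\xi\ge 0\}$ the quantity $E=L_2^{2}+R^{2}$ satisfies $E'=-2\xi L_2^{2}-2L_2\le 2|L_2|\le E+1$, so $E$ stays bounded there, hence so do $L_2$ and $R$, and then so does $L_1$ by Gronwall applied to $L_1'=-\xi L_1-1$ (with $\xi$ bounded on $[\varepsilon,t_{\max})$), so $(\xi,L_1,L_2,R)$ remains in a compact set and the solution extends past $t_{\max}$ — a contradiction. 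Therefore $t_0:=\xi^{-1}(0)$ lies in the interval of existence, and $\xi'(t_0)\le -1<0$ makes this zero transverse; combining the smooth dependence on $\delta_1$ near $t=0$ from the previous step with the standard smooth dependence of ODE solutions on parameters for $t$ away from $0$, the implicit function theorem shows $\xi^{-1}(0)$ is a smooth function of $\delta_1$. Finally, patching the continuity of $\eta_i(t)/t$ on $[0,\varepsilon]$ with its continuity on $\{t\ge \varepsilon/2\}$, where $t$ is bounded away from $0$, and with the continuity of $\xi^{-1}(0)$ in $\delta_1$, one concludes that $\eta_i(t)/t$ is continuous in $(\delta_1,t)$ on $[0,\xi^{-1}(0)]$, as claimed.
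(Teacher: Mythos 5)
Your proof is correct and is essentially the argument the paper outsources: the paper itself gives no proof of Proposition~\ref{mapa}, pointing only to the techniques in \cite{Buzano}, which are exactly the weighted-space contraction for a singular IVP that you carry out. Your linearisation matrix $A$ and its eigenvalues $\{-1,2,2,2\}$ are correct, as is the resonance/compatibility check $(A+I)c = H(0,0)$, which produces the affine one-parameter family $c = (8\delta_1-1,-\tfrac{1}{3},-2\delta_1,\delta_1)$; and since $-n$ is never an eigenvalue of $A$ for $n\ge 2$, the formal series is uniquely determined once $\delta_1$ is fixed, as you assert. Two small points worth tidying: (i) $A$ is \emph{not} diagonalisable at the eigenvalue $2$ (geometric multiplicity $2$, algebraic $3$), so $(s/t)^A$ carries a $\log(s/t)$ factor on that generalised eigenspace — harmless because you fixed $\alpha<2$ strictly, but it is the actual reason the strict inequality is needed, whereas your explanation (``strictly between the stable order $t^1$ and the unstable order $t^{-2}$'') is a slip — $\alpha>1$ kills the residual homogeneous $t^1$ mode for uniqueness and $\alpha<2$ accommodates the $O(t^2\log t)$ response to the $O(t^2)$ forcing; and (ii) in the continuation step, the dichotomy should be phrased so as to also exclude $t_{\max}=\infty$; this is immediate since $\xi'\le -1$ forces $\xi$ to cross $0$ in finite time, so the real content of the a priori $E$-bound is that the solution does not blow up before that crossing, which is what you in fact show.
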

The proof of Proposition \ref{mapa} essentially follows from the techniques discussed in \cite{Buzano}. 
We can similarly examine the initial value problem at the $\mathbb{S}^2$ orbit. This time, 
\eqref{metricsmoothness} and \eqref{usmoothness} imply that 
there must be smooth functions $\eta_0,\eta_1,\eta_2,\eta_3$ so that 
\begin{align}\label{etaformb}
 -\xi(T-t)=\frac{1}{t}+\eta_0(t), \qquad -L_1(T-t)=\frac{1}{t}+\eta_1(t), \qquad -L_2(T-t)=\eta_2(t), \qquad R(T-t)=R(T)+\eta_3(t),
\end{align}
where again $\eta_i(0)=0$ for $i=0,1,2,3$. 
This time, solutions are uniquely determined by 
\begin{align}\label{ivpb}
 \delta_2=\eta_0'(0), \qquad \delta_3=R(T).
\end{align}
\begin{prop}\label{mapb}
 For each value of $(\delta_2,\delta_3)\in \mathbb{R}^2$, there exists a unique solution to the initial value problem of \eqref{equationsnewf}, subject to 
 \eqref{etaformb} and \eqref{ivpb}.
 The solution can be extended smoothly to a point with $\xi=0$, and $\xi^{-1}(0)$ depends smoothly on $(\delta_2,\delta_3)$.
 For $i=0,1,2,3$, the quantity $\frac{\eta_i(t)}{t}$ depends continuously on $\delta_2,\delta_3$ and $t\in [\xi^{-1}(0),T]$.
\end{prop}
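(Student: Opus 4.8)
The plan is to follow the strategy behind Proposition \ref{mapa} --- the analysis of singular initial value problems for cohomogeneity one solitons from \cite{Buzano} --- after first reversing time. Set $s=T-t$ and define $\hat\xi(s)=-\xi(T-s)$, $\hat L_1(s)=-L_1(T-s)$, $\hat L_2(s)=-L_2(T-s)$, $\hat R(s)=R(T-s)$. A direct computation (chain rule together with the sign changes) shows that $(\hat\xi,\hat L_1,\hat L_2,\hat R)$ again satisfies the system \eqref{equationsnewf}, and that the smoothness conditions \eqref{etaformb} and \eqref{ivpb} translate into a singular initial value problem for this system at $s=0$ with $\hat\xi\sim\tfrac1s$, $\hat L_1\sim\tfrac1s$, $\hat L_2(0)=0$ and $\hat R(0)=\delta_3$. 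It therefore suffices to solve this problem, continue the solution to the first $s$ with $\hat\xi=0$, and track the dependence on $(\delta_2,\delta_3)$; undoing the substitution then gives the statements for $(\xi,L_1,L_2,R)$, with $\xi^{-1}(0)=T-s_\ast$.

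For the singular problem, substitute $\hat\xi=\tfrac1s+\eta_0$, $\hat L_1=\tfrac1s+\eta_1$, $\hat L_2=\eta_2$, $\hat R=\delta_3+\eta_3$ into \eqref{equationsnewf}. This produces a first-order system $\eta'=\tfrac1s M(\delta_3)\eta+F(s,\eta,\delta_3)$ with $M$ a constant matrix and $F$ real-analytic and of higher order, so $s=0$ is a regular singular point. As in \cite{Buzano}, the solutions that extend smoothly across $s=0$ are exactly those whose initial data lie in the direct sum of the eigenspaces of $M(\delta_3)$ for which the associated powers $s^{\mu}$ are compatible with smoothness; this subspace is two-dimensional and is parametrised by $\delta_2=\eta_0'(0)$ together with the value $\delta_3=\hat R(0)$, which enters only as a parameter. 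Existence and uniqueness, analytic dependence on $(\delta_2,\delta_3)$, and the continuity of each $\eta_i(s)/s$ down to $s=0$ all follow from the standard Banach fixed-point argument on the weighted space $\{\eta\in C^0:\ \sup_{s}s^{-1}|\eta(s)|<\infty\}$, the estimates being identical in form to those for Proposition \ref{mapa}. I expect the genuine work here, as opposed to a verbatim repetition of \cite{Buzano}, to be the bookkeeping that pins the dimension of this space at two rather than one: unlike at the $\mathbb{S}^1$ orbit, at the $\mathbb{S}^2$ orbit only $\hat\xi$ and $\hat L_1$ are singular while $\hat R$ limits to a free nonzero value, which changes the eigenvalue count of $M$.

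Finally, one must extend the solution forward and verify the dependence claims. Since $\hat\xi'=-\hat L_1^2-2\hat L_2^2-1\le-1$, $\hat\xi$ is strictly decreasing, so it reaches $0$ at some $s_\ast$ provided the solution does not blow up first; but on $\{\hat\xi\ge 0\}$ one has $0\le\hat\xi\le\hat\xi(s_0)$ for any interior starting point $s_0$, $\hat R$ does not change sign (as $\hat R'=-\hat L_2\hat R$), and $\tfrac{d}{ds}(\hat L_2^2+\hat R^2)=-2\hat\xi\hat L_2^2-2\hat L_2\le\hat L_2^2+\hat R^2+1$, $\tfrac{d}{ds}\hat L_1^2=-2\hat\xi\hat L_1^2-2\hat L_1\le\hat L_1^2+1$, so Gronwall bounds every unknown on the set $\{\hat\xi\ge 0\}$, which has length at most $\hat\xi(s_0)$; hence there is no blow-up, $s_\ast<\infty$, and the solution is smooth up to $s_\ast$. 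Because $\hat\xi'\le-1\ne0$ at $s_\ast$, the implicit function theorem shows that $s_\ast$, and thus $\xi^{-1}(0)$, depends smoothly on $(\delta_2,\delta_3)$. For the joint continuity of $\eta_i(s)/s$ in $(\delta_2,\delta_3,s)$ on $\{0\le s\le s_\ast(\delta_2,\delta_3)\}$, combine the weighted-norm continuity near $s=0$ coming from the fixed-point argument with smooth dependence on initial data for the regular system \eqref{equationsnewf} away from $s=0$, patching the two regimes by compactness. Translating back through $s=T-t$ yields the proposition.
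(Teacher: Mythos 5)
The paper gives no explicit proof of this proposition — it simply records it after noting that the analogous Proposition~\ref{mapa} ``essentially follows from the techniques discussed in \cite{Buzano}'' — so the implicit intent is precisely what you do: reverse time via $s=T-t$ (a reversal that is already built into \eqref{etaformb}), observe that the sign-flipped quantities $(\hat\xi,\hat L_1,\hat L_2,\hat R)$ solve \eqref{equationsnewf} again, reduce to a regular-singular IVP at $s=0$ handled by the Buzano fixed-point machinery on a weighted space, and then continue to $s_\ast=\xi^{-1}(0)$. Your Gronwall bounds on $\{\hat\xi\ge 0\}$ and the implicit-function argument for smooth dependence of $s_\ast$ on $(\delta_2,\delta_3)$ are correct and fill in details the paper leaves entirely implicit; your remark that the dimension count differs from the $\mathbb{S}^1$ orbit (only $\hat\xi,\hat L_1$ are singular, while $\hat R$ carries a free limiting value) is exactly the bookkeeping that makes the parametrisation two-dimensional. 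In short: correct, and essentially the same approach the paper intends.
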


The main idea behind the proof of Theorem \ref{CT} is to provide bounds on the possible values that $\delta_1,\delta_2,\delta_3$ can achieve. Fortunately, we can find some 
of these bounds immediately. 
\begin{prop}\label{initialbounds}
 If the solutions to the IVPs found in Propositions \ref{mapa} and \ref{mapb} coincide at time $\xi^{-1}(0)$, then $\delta_1\ge 0$, $\delta_2\ge -1$ and 
 $\delta_3\ge 0$. 
\end{prop}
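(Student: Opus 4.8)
The plan is to first observe that the coincidence hypothesis upgrades the two one-sided shooting solutions into a single genuine $SO(2)\times SO(3)$-invariant gradient shrinking Ricci soliton on $\mathbb{S}^4$. Indeed, if the solution of \eqref{equationsnewf} emanating from $t=0$ (governed by $\delta_1$ via \eqref{etaforma}) and the solution emanating from $t=T$ (governed by $(\delta_2,\delta_3)$ via \eqref{etaformb}) have the same values of $(\xi,L_1,L_2,R)$ at the orbit $\xi^{-1}(0)$, then since the right-hand side of \eqref{equationsnewf} is smooth in $(\xi,L_1,L_2,R)$, uniqueness for this ODE forces the two solutions to agree on a neighbourhood of that orbit, hence to coincide as one smooth solution on $(0,T)$. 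By Propositions \ref{mapa} and \ref{mapb} this solution satisfies the smoothness conditions \eqref{metricsmoothness} and \eqref{usmoothness} at both ends, so it integrates to a smooth soliton metric of the form \eqref{metricform} on $\mathbb{S}^4$. In particular Propositions \ref{scf1} and \ref{sf2p} apply, so in the notation of \eqref{equationsnewf} the two curvature eigenvalues
\[
 \xi L_1+1-L_1^2=-\frac{f_1''}{f_1},\qquad R^2-L_2^2=\frac{1-f_2'^2}{f_2^2}
\]
are non-negative on all of $[0,T]$. The three bounds will come from evaluating these quantities at the singular orbits $t=0$ and $t=T$ and reading off their values in terms of $\delta_1,\delta_2,\delta_3$.

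The bound $\delta_3\ge 0$ needs no hypothesis at all: $\delta_3=R(T)=1/f_2(T)$ and $f_2(T)>0$ by \eqref{metricsmoothness}, so in fact $\delta_3>0$. For $\delta_1\ge 0$, I would expand at the $\mathbb{S}^1$ orbit using \eqref{etaforma}: writing $R=\tfrac1t+\eta_3$ and $L_2=\tfrac1t+\eta_2$ with $\eta_2(0)=\eta_3(0)=0$, one has $R^2-L_2^2=(\eta_3-\eta_2)\big(\tfrac2t+\eta_2+\eta_3\big)$. Substituting the Ansatz \eqref{etaforma} into the equation $R'=-L_2R$ of \eqref{equationsnewf} and comparing the $O(1)$ coefficients gives $\eta_2'(0)=-2\eta_3'(0)=-2\delta_1$, whence
\[
 \lim_{t\to 0}\big(R^2-L_2^2\big)=2\big(\eta_3'(0)-\eta_2'(0)\big)=6\delta_1 .
\]
Since $R^2-L_2^2\ge 0$ throughout $[0,T]$, this limit is non-negative and so $\delta_1\ge 0$. (Concretely: if $f_2(t)=t+at^3+O(t^5)$ near $t=0$ then $\delta_1=-a$, and the sectional curvature of the collapsing $\mathbb{S}^2$ at the $\mathbb{S}^1$ orbit equals $-6a$.)

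For $\delta_2\ge -1$ I would carry out the analogous computation at the $\mathbb{S}^2$ orbit using \eqref{etaformb} with $s=T-t$: write $\xi=-\tfrac1s-\eta_0$, $L_1=-\tfrac1s-\eta_1$, $L_2=-\eta_2$ with $\eta_i(0)=0$. Feeding these expansions into $\xi'=-L_1^2-2L_2^2-1$ and matching the $O(1)$ terms yields the relation $\eta_1'(0)=-\tfrac12(\delta_2+1)$ between the two relevant parameters. Inserting the same expansions into the eigenvalue formula $\xi L_1+1-L_1^2$, the $s^{-2}$ and $s^{-1}$ singular parts cancel and one is left with
\[
 \lim_{t\to T}\big(\xi L_1+1-L_1^2\big)=1+\delta_2-\eta_1'(0)=\tfrac32(1+\delta_2).
\]
Non-negativity of $-f_1''/f_1$ on $[0,T]$ then forces $\delta_2\ge -1$.

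The only real work here is the asymptotic bookkeeping near the two singular orbits: one must check that the $\tfrac1t$ (respectively $\tfrac1s$) contributions cancel so that the curvature eigenvalues really do have finite limits there, and then extract those limits in terms of $\delta_1$ and $\delta_2$. I do not expect any genuine obstacle, since Propositions \ref{scf1} and \ref{sf2p} are already in hand and the expansions \eqref{etaforma}, \eqref{etaformb} supplied by Propositions \ref{mapa} and \ref{mapb} — together with the smoothness normalisations \eqref{metricsmoothness}, \eqref{usmoothness} — provide exactly the information needed to make those limits well-defined.
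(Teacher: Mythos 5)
Your proof is correct and follows essentially the same route as the paper's: use Propositions \ref{scf1} and \ref{sf2p} for pointwise non-negativity of the two curvature eigenvalues, then extract their boundary values at the singular orbits in terms of $\delta_1,\delta_2$ via the $\eta_i$ expansions (the paper's proof for $\delta_1$ is identical, and your $\delta_2$ computation supplies the details the paper relegates to ``follows similarly''). The only cosmetic difference is for $\delta_3$, where you read off $\delta_3=1/f_2(T)>0$ directly from the boundary conditions \eqref{metricsmoothness} while the paper argues via $\lim_{t\to 0}R=\infty$ and the ODE $R'=-L_2R$; both are immediate.
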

\begin{proof}
 The $\delta_3$ bound is obvious because $\lim_{t\to 0}R(t)=\infty$, and $R'=-L_2R$. For $\delta_1$, note that $R^2-L_2^2\ge 0$ by Proposition \ref{sf2p}, so 
 using \eqref{etaforma} and \eqref{ivpa}, we find $0\le \lim_{t\to 0}(R(t)^2-L_2(t)^2)=2\eta_3'(0)-2\eta_2'(0)$. 
 But by using \eqref{equationsnewf} we see that $\eta_2'(0)=-2\eta_3'(0)$, so $0\le 6\delta_1$. 
 For $\delta_2$, we can use Proposition \ref{scf1} to find that $\xi L_1+1-L_1^2\ge 0$ everywhere; the result then follows similarly to the analogous result for 
 $\delta_1$. 
\end{proof}

Let $C$ be the smooth curve in $\mathbb{R}^3$ consisting of all values of $(L_1,L_2,R)$ evaluated at $\xi^{-1}(0)$ found from Proposition \ref{mapa} 
for $\delta_1\ge 0$, and let 
$S$ be the smooth surface in $\mathbb{R}^3$ consisting of all values of $(L_1,L_2,R)$ evaluated at $\xi^{-1}(0)$ found from Proposition \ref{mapb} for $\delta_2\ge -1$ and $\delta_3\ge 0$. 
Proposition \ref{initialbounds} implies that any $SO(2)\times SO(3)$-invariant Ricci soliton on $\mathbb{S}^4$ must correspond to a point in $\mathbb{R}^3$ 
where the curve $C$ intersects the surface $S$. The following images give various views of an approximation to $C$ (in red) and $S$ (in blue) which were found 
using Matlab's ODE solver. 

\begin{center}
    \includegraphics[width=5cm]{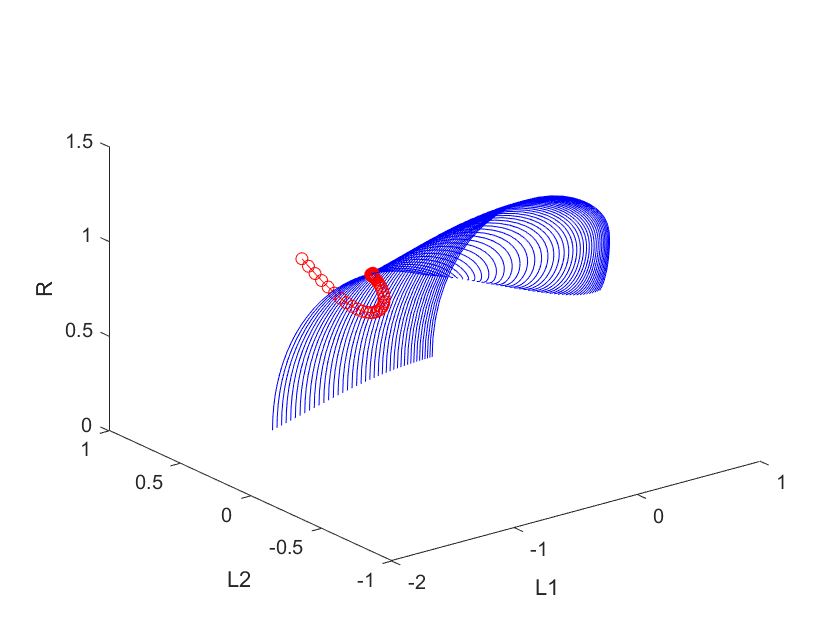} \hspace{1cm}
    \includegraphics[width=5cm]{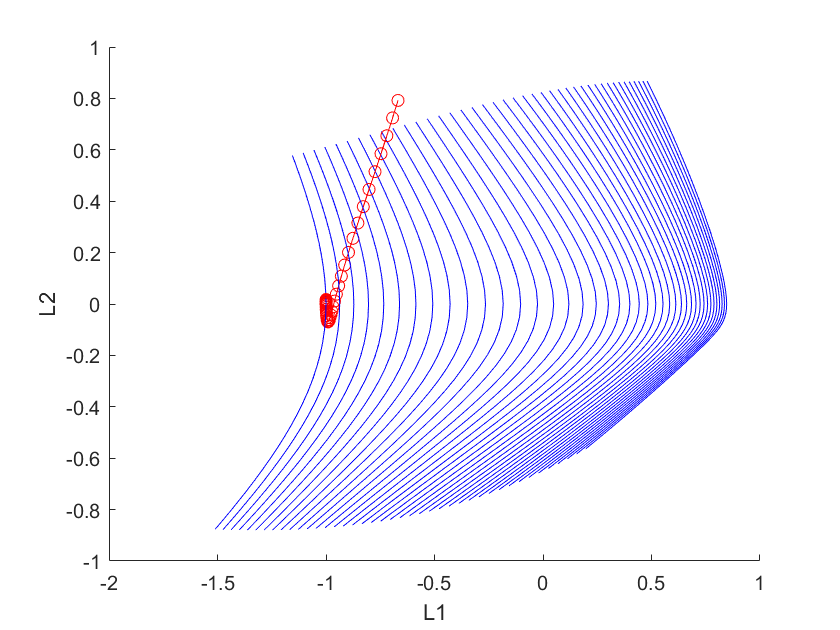}\\
    \vspace{1cm}
    \includegraphics[width=5cm]{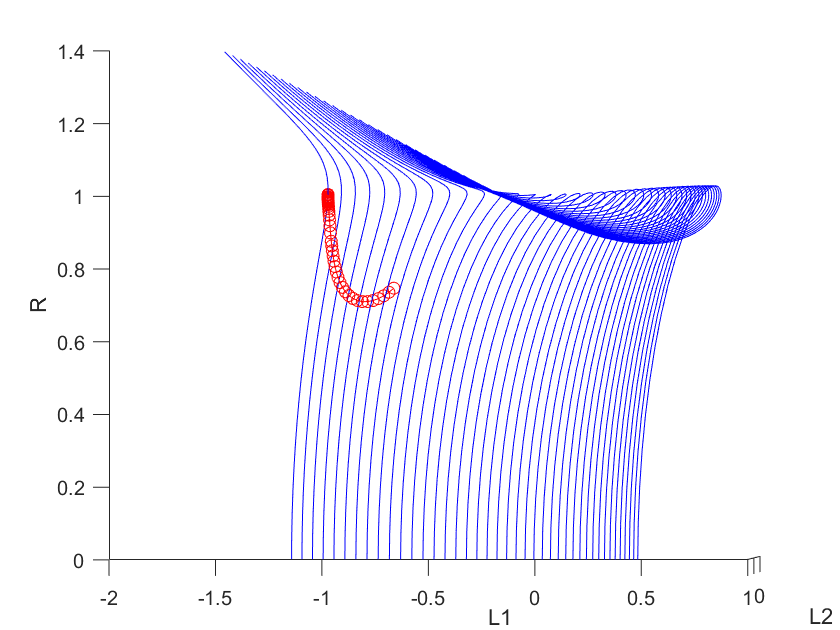}\hspace{1cm}
    \includegraphics[width=5cm]{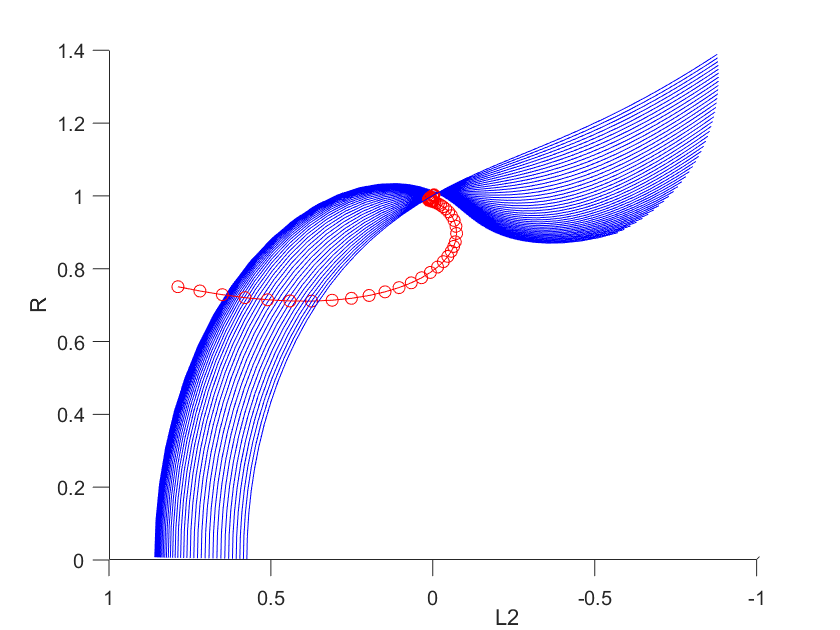}
\end{center}

Some important observations:
\begin{itemize}
 \item The intersection we see corresponds to the round sphere with 
 Einstein constant $1$, and is found by setting $\delta_1=\frac{1}{18}$, $\delta_2=-\frac{7}{9}$ and $\delta_3=\frac{1}{\sqrt{3}}$.
 \item As $\delta_1$ increases, 
 it appears the corresponding point on the curve $C$ approaches the point on the surface $S$ corresponding to $\delta_2=-1$ and $\delta_3=1$. This behaviour appears 
 to resemble that of the ancient `pancake' Ricci flow solution discussed in Section \ref{newancient}. 
\end{itemize}

\section{The surface $S$ close to $\delta_2=-1$, $\delta_3=1$}
The biggest difficulty in proving Theorem \ref{CT} is establishing an \textit{a priori} upper bound for $\delta_1$. This part of the proof 
is achieved by showing that any Ricci soliton that occurs with $\delta_1$ too large must have non-negative Riemann curvature everywhere. 
This is a two-step process: 
the first step is showing that if $\delta_1$ is too large, then the Riemann curvature must be non-negative between the $\mathbb{S}^1$ singular orbit
and the unique orbit with $\xi=10$, and the data at this 
point resembles the Gaussian soliton on $\mathbb{R}^2\times \mathbb{S}^2$; the second 
shows that if the data at $\xi=10$ is close to Gaussian, then the Riemann curvature must also be non-negative between the $\xi=10$ orbit 
and the $\mathbb{S}^2$ singular orbit. 
This section is devoted to the proof of Theorem \ref{S2orbit} below, which achieves the second step. Indeed, Theorem \ref{S2orbit} 
explicitly describes just how Gaussian we need to be at the $\xi=10$ principal orbit to guarantee 
curvature non-negativity between this orbit and the $\mathbb{S}^2$ orbit. The proof essentially involves an analysis of the behaviour of the soliton equations 
for values of $\delta_2$ and $\delta_3$ close to $-1$ and $1$ respectively. 
\begin{thm}\label{S2orbit}
 Suppose we have an $SO(2)\times SO(3)$-invariant gradient shrinking Ricci soliton on $\mathbb{S}^4$ of the form \eqref{metricform}. 
 Let $B_2= 10^{-146}$, and consider 
 the quantities $\xi,L_1,L_2,R$ associated to the soliton which satisfy \eqref{equationsnewf}. 
 If there is a principal orbit with $\xi=10$,
 $\left|L_1+\frac{1}{5+\sqrt{26}}\right|\le B_2$, $\left|R-1\right|\le B_2$, $\left|L_2\right|\le B_2$, and with $\mathcal{R}\ge 0$, 
 then $\mathcal{R}\ge 0$ between this principal orbit and the $\mathbb{S}^2$ orbit ($\mathcal{R}$ is described in \eqref{riemanncurvatureoperator}).
\end{thm}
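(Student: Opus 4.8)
The plan is to follow the solution of \eqref{equationsnewf} from the principal orbit $\{t=t_0\}$ where $\xi=10$ toward the $\mathbb{S}^2$ orbit and show that the two ``dangerous'' Riemann eigenvalues stay non-negative. By \eqref{riemanncurvatureoperator} and the list following \eqref{equationsnewf}, $\mathcal R\ge 0$ means $\lambda_1=\xi L_1+1-L_1^2$, $\lambda_2=R^2-L_2^2$, $\lambda_3=-L_1L_2$ and $\lambda_4=\xi L_2+1-R^2-L_2^2$ are all $\ge 0$; Propositions \ref{scf1} and \ref{sf2p} give $\lambda_1,\lambda_2\ge 0$ on all of $(0,T)$ for free, so only $\lambda_3$ and $\lambda_4$ need to be controlled on $[t_0,T]$. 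Since $L_1(0)=0$ by \eqref{etaforma} and $L_1'=-1$ wherever $L_1=0$, we get $L_1<0$ on $(0,T)$, so $\lambda_3\ge 0$ is just $L_2\ge 0$. Also, testing $\lambda_1(t_0),\lambda_3(t_0)\ge 0$ against the hypotheses --- and noting that $5-\sqrt{26}=-\tfrac1{5+\sqrt{26}}$ is precisely the negative root of $\lambda_1=0$ at $\xi=10$ --- pins the data down to $L_1(t_0)\in[5-\sqrt{26},5-\sqrt{26}+B_2]$ and $L_2(t_0)\in[0,B_2]$, with $R(t_0)\in[1-B_2,1+B_2]$.

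The structural fact driving everything is that $\{L_2\equiv 0,\ R\equiv 1\}$ is an invariant set of \eqref{equationsnewf} --- a two-dimensional soliton times the round $\mathbb{S}^2$ --- on which the system degenerates to the planar pair $\xi'=-L_1^2-1$, $L_1'=-\xi L_1-1$. Let $\bar C=(\bar\xi,\bar L_1)$ be the solution of this planar system through $(10,5-\sqrt{26})$; it has $\bar L_1<0$ and, in the manner of \eqref{etaformb}, satisfies $\bar L_1\sim\bar\xi\sim -1/(t^\ast-t)$ near its finite blow-up time $t^\ast$. Setting $\epsilon=L_2$, $\rho=R-1$, $\ell=L_1-\bar L_1$, I would run a continuity/barrier argument off $\bar C$: linearizing about the invariant set gives $\epsilon'\approx -\xi\epsilon+2\rho$ and $\rho'\approx -\epsilon$, whose Lyapunov function $V=\epsilon^2+2\rho^2$ satisfies $V'\approx -2\xi\epsilon^2$, so $V$ is non-increasing while $\xi\ge 0$ and grows at rate at most $2|\xi|V$ once $\xi<0$, while $\ell$ and $\xi-\bar\xi$ obey a linear system forced only at order $\epsilon^2=O(V)$. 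As long as $\mathcal R\ge 0$ has held up to the orbit in question --- so $L_2\ge 0$ and $R$ is non-increasing, because $R'=-L_2R$ --- these estimates keep $(L_2,R)$ within $O(B_2)$ of $(0,1)$ and $L_1$ within $O(B_2)$ of $\bar L_1$ all the way to the orbit $\{t=t_\ast\}$ where $\xi$ first hits a fixed large negative value $-K$; inside this thin tube $\lambda_4\ge 0$ follows, for $\xi\ge 0$, from $\xi L_2\ge 0$ together with $R=1+O(B_2)$ (the residual $O(B_2^2)$ being absorbed into $\xi L_2$, using the correlation between $L_2(t_0)$ and $R(t_0)-1$ already forced by $\lambda_4(t_0)\ge 0$), and for $-K\le\xi<0$ from $R\le 1$; the delicate point is $\lambda_3\ge 0$, i.e. that $L_2$ never touches zero with $R<1$, which is where the soliton's smoothness at the $\mathbb{S}^1$ orbit, \eqref{etaforma}, must be used to over-determine the data at $t_0$ beyond what $V$ sees. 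The explicit value $B_2=10^{-146}$ is simply a number small enough for this $O(B_2)$ to beat the Gronwall constants compounded over the stretch $0\ge\xi\ge -K$.

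Past $\{t=t_\ast\}$ the soliton sits in the stable manifold of the $\mathbb{S}^2$ singular orbit, and I would hand the argument over to \eqref{etaformb} and \eqref{metricsmoothness}: there $L_1,\xi\sim -1/(T-t)$, $L_2\to 0$ and $R\to\delta_3$, and matching the leading-order behaviour of $\lambda_4=\xi L_2+1-R^2-L_2^2$ as $t\to T$ against its value $-f_2''(T)/f_2(T)$ at the singular orbit forces $\delta_3\le 1$, $L_2=\tfrac{1-\delta_3^2}{2}(T-t)+o(T-t)\ge 0$, and $\lambda_3,\lambda_4\to\tfrac{1-\delta_3^2}{2}\ge 0$; together with $\lambda_1\ge 0$ from Proposition \ref{scf1}, this yields $\mathcal R\ge 0$ on a full neighbourhood of the $\mathbb{S}^2$ orbit, and $K$ is chosen so that the tube estimate at $\xi=-K$ lands inside this neighbourhood. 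The real difficulty --- and the reason $B_2$ has to be so small --- is exactly this join: the Lyapunov functional controls only the \emph{size} of $(L_2,R-1)$, not the \emph{sign} of $L_2$, while the equilibrium $(L_2,R)=(0,1)$ is unstable for $\xi<0$ (where $\int|\xi|$ ultimately diverges at $t=T$), so keeping the trajectory on the correct side of $\{L_2=0\}$ through the whole destabilizing regime, until the smoothness conditions at the $\mathbb{S}^2$ orbit take over, is the heart of the matter.
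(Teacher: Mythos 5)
Your reduction of the problem is right --- Propositions \ref{scf1} and \ref{sf2p} dispose of $\lambda_1=\xi L_1+1-L_1^2$ and $\lambda_2=R^2-L_2^2$, the invariant set $\{L_2\equiv 0,\,R\equiv 1\}$ and the Gaussian reference trajectory through $(\xi,L_1)=(10,\,5-\sqrt{26})$ are the correct organizing objects, and a Lyapunov quantity like your $V=\epsilon^2+2\rho^2$ (the paper uses $K^2=L_2^2+(R-1)^2$, essentially equivalent) does keep $(L_2,R-1)$ of order $B_2$ as long as $\xi\ge 0$ and of order $B_1$ down to $T$. You have also correctly located the crux: $V$ controls the magnitude of $(L_2,R-1)$, not the sign of $L_2$, and the naive sign argument ($L_2\ge 0\Rightarrow R$ non-increasing $\Rightarrow\delta_3\le 1\Rightarrow L_2\ge 0$ near $T$) is circular.

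The gap is that you identify this circularity as ``the heart of the matter'' but do not break it, and the mechanism you gesture at (using smoothness at the $\mathbb{S}^1$ orbit to over-determine the data at $t_0$) is not how it is broken; the $\mathbb{S}^1$ orbit plays no role in this stretch. The paper's resolution is to prove something weaker and sign-agnostic: Lemma \ref{S21} does \emph{not} prove $\lambda_3,\lambda_4\ge 0$, it proves they \emph{do not change sign} on $[\xi^{-1}(10),T)$. Concretely, after reducing to $|\delta_2+1|,|\delta_3-1|\le B_1$ (Lemmas \ref{S22}--\ref{S23}), it introduces the normalized variable $y(t)=\tfrac{-L_2(T-t)}{\delta_3-1}$, which absorbs the unknown sign of $\delta_3-1$, Taylor-expands the full system in the backward time $t=T-\cdot$ around the second-order jet determined by $(\delta_2,\delta_3)$, and shows the error $v$ is so small (estimate \eqref{mainvestimate}) that $y(t)\ge\tfrac{\delta_3+1}{2}t-|v(t)|\ge 0$ on all of $[0,11]\supseteq[0,T-\xi^{-1}(10)]$; similarly for $\lambda_4/(\delta_3-1)$. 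Non-negativity of $\lambda_3,\lambda_4$ is then read off from the hypothesis $\mathcal R\ge 0$ at the $\xi=10$ orbit, not from a direct positivity argument near $T$. This is the missing idea in your proposal; without it (or some equivalent device), the hand-off between your tube estimate at $\xi=-K$ and the singular-orbit expansion is not closed, and the claim ``matching ... forces $\delta_3\le 1$'' presupposes what it is supposed to establish.

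Two smaller inaccuracies: the limit of $\lambda_4=\xi L_2+1-R^2-L_2^2$ at $t=T$ is $\tfrac{3(1-\delta_3^2)}{2}$, not $\tfrac{1-\delta_3^2}{2}$ (only $\lambda_3$ has the latter limit), and the paper obtains $|\delta_3-1|\le B_1$, not $\delta_3\le 1$; the sign of $\delta_3-1$ is genuinely undetermined at the stage where the Taylor argument is run, which is precisely why it must be written to be insensitive to that sign.
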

\begin{rmk}
The value of $-\frac{1}{5+\sqrt{26}}$ in the statement of the above theorem arises in relation to the Gaussian soliton on $\mathbb{R}^2\times \mathbb{S}^2$. Indeed, this 
soliton is the special solution coming out of the $\mathbb{S}^2$ orbit with $-\xi(T-t)=\frac{1}{t}-t$, $-L_1(T-t)=\frac{1}{t}$, 
 $L_2(T-t)=0$, $R(T-t)=1$; when $\xi=10$, we have $L_1=-\frac{1}{5+\sqrt{26}}$. 
\end{rmk}

The proof of Theorem \ref{S2orbit} follows from the three lemmas presented below. The first lemma sets the goal posts, in that it tells us that curvature positivity 
between the $\xi=10$ orbit and the $\mathbb{S}^2$ orbit is guaranteed if curvature is positive at $\xi=10$ and $(\delta_2,\delta_3)$ is close to $(-1,1)$.  
\begin{lem}\label{S21}
 Suppose we have a soliton so that $(\delta_2,\delta_3)\in [-1,-1+B_1]\times [1-B_1,1+B_1]$, where $B_1=10^{-70}$.
 Then the corresponding solutions of the IVP in Proposition \ref{mapb} are such that the sectional curvatures
 $-L_1L_2$ and $\xi L_2+1-R^2-L_2^2$ do not change sign on $[\xi^{-1}(10),T)$. 
\end{lem}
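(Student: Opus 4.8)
\textbf{Proof proposal for Lemma \ref{S21}.}

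The plan is to analyze the IVP from Proposition \ref{mapb} near the $\mathbb{S}^2$ orbit using the $\eta$-expansion \eqref{etaformb}--\eqref{ivpb}, and to compare the actual solution with the Gaussian soliton solution (the one with $\delta_2=-1$, $\delta_3=1$, for which $L_1(T-t)=-1/t$, $L_2(T-t)=0$, $R(T-t)=1$, $\xi(T-t)=-1/t+t$) by a Gronwall-type estimate on the difference of the $\eta_i$. The first step is to write down the ODEs satisfied by $(\eta_0,\eta_1,\eta_2,\eta_3)$ obtained by substituting \eqref{etaformb} into \eqref{equationsnewf}; near $t=0$ these are singular ODEs with a $1/t$ forcing, but by the regularity in Proposition \ref{mapb} the quantities $\eta_i(t)/t$ are continuous in $(\delta_2,\delta_3,t)$ on $[\xi^{-1}(10),T]\times(\text{a neighbourhood of }(-1,1))$. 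The key quantitative input is that at $(\delta_2,\delta_3)=(-1,1)$ the solution is \emph{exactly} the Gaussian, so on the compact $t$-interval corresponding to $\xi\in[0,10]$ the quantities $L_1L_2$ and $\xi L_2+1-R^2-L_2^2$ are each identically zero there (one checks: $L_2\equiv 0$ kills $-L_1L_2$, and $1-R^2-L_2^2\equiv 0$ with $\xi L_2\equiv 0$ kills the other). Hence for $(\delta_2,\delta_3)$ in a small enough box, by continuity these two sectional curvatures are uniformly small in absolute value on the relevant interval.

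The difficulty, and the reason for the absurdly small constant $B_1=10^{-70}$, is that ``small'' is not the same as ``one sign''. To show a quantity that is small does not \emph{change sign}, I would show it is in fact strictly bounded away from zero with a definite sign on $[\xi^{-1}(10),T)$, except possibly very close to the $\mathbb{S}^2$ orbit where it must vanish by the boundary conditions \eqref{metricsmoothness} (there $L_2(T)=0$, so $-L_1L_2\to 0$, and similarly the other term is forced to a boundary value). So the argument splits into two regimes. Near the $\mathbb{S}^2$ orbit ($t$ small), I would use the $\eta$-expansions directly: write $-L_2(T-t)=\eta_2(t)$ with $\eta_2(0)=0$, compute $\eta_2'(0)$ in terms of $\delta_2,\delta_3$ from \eqref{equationsnewf} (at $(-1,1)$ one gets a specific value — the analogue of the computation in Proposition \ref{initialbounds}), and show this derivative has a fixed sign for $(\delta_2,\delta_3)$ in the box, so that $-L_1L_2$ has a fixed sign for small $t>0$; do the same for $\xi L_2+1-R^2-L_2^2$ using the leading Taylor coefficients of $\eta_0,\eta_2,\eta_3$. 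Away from the $\mathbb{S}^2$ orbit (i.e. on the part of $[\xi^{-1}(10),T)$ bounded away from $T$), the sectional curvatures at $(\delta_2,\delta_3)=(-1,1)$ are identically zero, so I cannot get a sign from continuity alone; instead I would derive the linear ODEs satisfied by $P:=-L_1L_2$ and $W:=\xi L_2+1-R^2-L_2^2$ along the flow (differentiating and substituting \eqref{equationsnewf}), observe that at the Gaussian these are $P\equiv W\equiv 0$ with the linearization being a genuine linear system, and run a sign-preservation argument: if $P$ (resp. $W$) were to vanish at some first time $t_*$ in the interior with, say, $P'(t_*)\le 0$, the ODE for $P$ at that point would force a contradiction up to an error controlled by $|\delta_2+1|+|\delta_3-1|\le B_1$, which is where the smallness of $B_1$ is spent. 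Chaining these estimates along the whole interval (the number of ``steps'' and the exponential Gronwall constants accumulated across $\xi\in[10,0]$ is what pushes the required bound down to $10^{-70}$) gives the claim.

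The main obstacle I expect is precisely this interior sign-preservation step: since the comparison solution has the two curvatures vanishing identically (not just at isolated points), one genuinely needs to control the \emph{next order} behaviour of the perturbed solution, i.e. bound the solution of the linearized system around the Gaussian and show the nonlinear error stays subordinate to it on the entire interval. Managing the constants so that a single explicit $B_1$ works uniformly — rather than a $\delta$-$\varepsilon$ statement — is the technical heart of the lemma, and is why the bound is stated as the concrete (if enormous-denominator) number $10^{-70}$; the subsequent lemmas presumably propagate this through the remaining curvature eigenvalues and hand the output to Theorem \ref{S2orbit} via the still-smaller $B_2=10^{-146}$.
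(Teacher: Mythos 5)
You correctly identify the central difficulty: at $(\delta_2,\delta_3)=(-1,1)$ the comparison solution is the Gaussian, where both $-L_1L_2$ and $\xi L_2+1-R^2-L_2^2$ vanish \emph{identically} on the whole interval, so a Gronwall estimate alone tells you these quantities are small, not that they have one sign. Where the proposal goes wrong is in the mechanism proposed to resolve this. In the near-orbit regime you suggest computing $\eta_2'(0)$ and showing it has a fixed sign; but $\eta_2'(0)=\tfrac{\delta_3^2-1}{2}$, which vanishes at $\delta_3=1$ and can be of either sign (and of uncontrollably small magnitude) inside the box $[1-B_1,1+B_1]$, so no uniform sign is available. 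In the interior regime the proposed barrier argument for $P=-L_1L_2$ does not have traction either: at a zero of $P$ one has $L_2=0$ (since $L_1<0$), and there $P'=-L_1(R^2-1)$, whose sign is governed by $R-1$, a quantity that is itself $O(\delta_3-1)$ and whose sign you have not pinned down. So both $P$ and $P'$ are vanishingly small on the same scale, and ``the ODE forces a contradiction up to an error $\le B_1$'' does not give you a definite sign.

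The paper's resolution, which your proposal misses, is a \emph{rescaling}, not a barrier argument. One first disposes of $\delta_3=1$ directly (then $L_2\equiv 0$, $R\equiv 1$ and there is nothing to prove), and for $\delta_3\ne 1$ introduces the normalized variables
\[
y(t)=\frac{-L_2(T-t)}{\delta_3-1},\qquad z(t)=\frac{R(T-t)-1}{\delta_3-1},
\]
together with $w=-\xi(T-t)-\tfrac1t$ and $x=-L_1(T-t)+\xi(T-t)$. After this division, the reference profile is no longer identically zero: $y'(0)=\tfrac{\delta_3+1}{2}\approx 1$ and
\[
\frac{\xi L_2+1-R^2-L_2^2}{\delta_3-1}
\]
is close to $-2$, both order one. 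Now a second-order Taylor expansion of $(w,x,y,z)$ at $t=0$ plus a Gronwall estimate on the error $v=u-u_2$ (which is where the $10^{65}$ and hence the absurd $B_1=10^{-70}$ come from) shows the normalized quantities stay within distance $10^{-5}$ of their reference values on the whole interval $t\in[0,11]\supseteq[0,T-\xi^{-1}(10)]$. Since the reference values are nonzero of definite sign, so is the normalized quantity, and undoing the division by $\delta_3-1$ gives that $-L_1L_2$ and $\xi L_2+1-R^2-L_2^2$ do not change sign. There is no decomposition into two regimes and no ODE for $P$ or $W$; the single rescaled Taylor--Gronwall estimate on $[0,11]$ does the whole job. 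In short: your diagnosis of the obstacle is right, but to get a sign you must first divide out the degeneracy by $\delta_3-1$, which turns ``close to zero'' into ``close to a definite nonzero profile'', and at that point ordinary continuity suffices.
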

The curvature positivity condition of the previous lemma is assumed in the hypothesis of Theorem \ref{S2orbit}, so we turn attention to 
the task of ensuring that $\max\{\left|\delta_3-1\right|,\delta_2+1\}\le B_1$ by having the soliton at the $\xi=10$ orbit quite close to the Gaussian soliton. 
\begin{lem}\label{S22}
 A Ricci soliton with $\left|R-1\right|,\left|L_2\right|\le B_2$ at the $\xi^{-1}(10)$ orbit must have 
 $\max\{\left|R(t)-1\right|,\left|L_2(t)\right|\}\le B_1$ for all $t\in (\xi^{-1}(10),T)$. In particular, $\left|\delta_3-1\right|\le B_1$. 
\end{lem}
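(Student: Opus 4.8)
The plan is to treat the pair $(R-1, L_2)$ as a perturbation and run a continuity/bootstrap argument on the interval $(\xi^{-1}(10), T)$, using the last two equations of \eqref{equationsnewf} together with the sign information already available. Let me write $\epsilon(t) = \max\{|R(t)-1|, |L_2(t)|\}$ and let $t_* \in (\xi^{-1}(10), T]$ be the supremum of times on which $\epsilon \le B_1$; by hypothesis $\epsilon(\xi^{-1}(10)) \le B_2 \ll B_1$, so $t_* > \xi^{-1}(10)$, and the goal is to show $t_* = T$. Suppose not; then $\epsilon(t_*) = B_1$. The idea is to derive a differential inequality for $\epsilon$ (or for $R-1$ and $L_2$ separately) that, integrated over $(\xi^{-1}(10), t_*)$, forces $\epsilon(t_*) \le B_2 \cdot e^{(\text{something})} < B_1$ — a contradiction — provided the exponential factor is controlled. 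The constants $B_1 = 10^{-70}$ and $B_2 = 10^{-146}$ are presumably chosen precisely so that the accumulated growth factor, which should be a fixed number like $e^{C T}$ or something polynomial in the relevant bounds, is comfortably smaller than $B_1/B_2 = 10^{76}$.

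The key steps, in order: First, I would establish an \emph{a priori} upper bound on $\xi$ and on $T - \xi^{-1}(10)$ that is independent of $(\delta_2, \delta_3)$ in the relevant range — this is needed to control the integration. On $(\xi^{-1}(10), T)$ we have $\xi' = -L_1^2 - 2L_2^2 - 1 \le -1$, so $\xi$ decreases at unit rate, giving both that $T - \xi^{-1}(10) \le 10$ (since $\xi$ must reach, e.g., values near the $\mathbb{S}^2$ boundary behavior) and a uniform bound $0 \le \xi \le 10$ on this interval. Second, with $R$ close to $1$ we have $R' = -L_2 R$, so $|(R-1)'| \le |L_2| \cdot |R| \le 2|L_2|$ while $\epsilon$ is small; and from $L_2' = -\xi L_2 + R^2 - 1 = -\xi L_2 + (R-1)(R+1)$ we get $|L_2'| \le |\xi|\,|L_2| + 2|R-1| \le 12\,\epsilon$ on the bootstrap interval. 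Third, combining these yields $\epsilon' \le 12\,\epsilon$ in the appropriate one-sided (Dini-derivative) sense, hence $\epsilon(t) \le B_2\, e^{12(t - \xi^{-1}(10))} \le B_2\, e^{120}$ on $(\xi^{-1}(10), t_*)$. Since $e^{120} < 10^{53} < 10^{76} = B_1/B_2$, we get $\epsilon(t_*) < B_1$, contradicting $\epsilon(t_*) = B_1$; therefore $t_* = T$. The claim $|\delta_3 - 1| \le B_1$ is then immediate since $\delta_3 = R(T) = \lim_{t \to T} R(t)$ and $|R(t) - 1| \le B_1$ throughout.

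The main obstacle is the first step: getting a clean, $(\delta_2,\delta_3)$-independent bound on the length of the interval $(\xi^{-1}(10), T)$ and on $\xi$ there, because a priori $T$ depends on the soliton. The resolution should be that $\xi$ is strictly decreasing with $\xi' \le -1$, and the boundary data \eqref{etaformb} forces $-\xi(T-t) \sim 1/t$, so $\xi \to -\infty$ as $t \to T$ only in the singular-orbit sense — but on any compact subinterval bounded away from $T$ where the bootstrap hypothesis $\epsilon \le B_1$ holds, $\xi$ stays in a fixed bounded range, and one checks that $\xi = 10$ down to $\xi$ of order $O(1)$ is traversed in time $O(1)$, which is all the estimate needs. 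A secondary technical point is making the Dini-derivative argument for $\epsilon = \max\{|R-1|,|L_2|\}$ rigorous, but this is standard: at a time where the max is achieved by one of the two terms, differentiate that term and use the bounds above. I would also double-check that the sign conventions and the precise value $12$ (versus some other $O(1)$ constant) leave enough room; given the enormous gap between $B_1$ and $B_2$, there is no real danger here.
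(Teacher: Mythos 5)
There is a genuine gap, and it is precisely in the step you flag as "the main obstacle" and then wave away. You assert that $\xi$ stays in a fixed bounded range (essentially $0\le\xi\le 10$) on $(\xi^{-1}(10),T)$, so that $|L_2'|\le|\xi||L_2|+2|R-1|\le 12\epsilon$ gives a uniform exponential bound. But $\xi$ is \emph{not} bounded on this interval: by \eqref{etaformb}, $-\xi(T-t)\sim\tfrac{1}{t}$, so $\xi\to-\infty$ as $t\to T$, and this behaviour is forced by the $\mathbb{S}^2$ singular orbit regardless of any bootstrap hypothesis on $\epsilon$. The bound $\xi'\le-1$ only controls $\xi^{-1}(0)-\xi^{-1}(10)\le 10$; beyond $\xi^{-1}(0)$, the coefficient $\max\{0,-\xi\}$ in your growth rate for $L_2$ is unbounded, and the naive Gronwall estimate $\epsilon\le B_2 e^{12(T-\xi^{-1}(10))}$ fails. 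Since the lemma's conclusion is a bound all the way to $T$ (indeed $\delta_3=R(T)$), you cannot restrict to a compact subinterval bounded away from $T$.

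The paper's proof gets around this with three moves you are missing. First it works with $K=\sqrt{L_2^2+(R-1)^2}$ and the one-sided inequality $K'\le(\tfrac12+\max\{0,-\xi\})K$, which isolates exactly where the unboundedness of $\xi$ enters. On $(\xi^{-1}(10),\xi^{-1}(0)]$ this is fine (there $\xi\ge 0$), giving $K\le\sqrt2\,e^5B_2$. On $(\xi^{-1}(0),T-\tfrac{B_1}{2}]$ the crucial input is Proposition \ref{scf1}, which gives $0\le-L_1\le\tfrac{1}{T-t}$; a bootstrap on the quantity $-\xi+L_1$ then yields $-\xi\le-L_1+\tfrac12\le\tfrac{1}{T-t}+\tfrac12$, so the growth coefficient $\max\{0,-\xi\}$ is integrable with $\int\le\ln\tfrac{2}{B_1}+O(1)$, producing the factor $\tfrac{1}{B_1}$ rather than an unbounded exponential. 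Finally, on the last sliver $(T-\tfrac{B_1}{2},T)$ the equation $|R'|\le R^2$ (from Proposition \ref{sf2p}) together with $K(T-\tfrac{B_1}{2})\le\tfrac{B_1}{10}$ and $L_2(T)=0$ closes the estimate. Your proposal never invokes Proposition \ref{scf1}, which is what tames $-\xi$; without it the bootstrap has no chance of reaching $T$.
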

\begin{lem}\label{S23}
 A Ricci soliton with $\left|L_1+\frac{1}{5+\sqrt{26}}\right|\le B_2$ at time 
 $\xi^{-1}(10)$ and $\max\{\left|R(t)-1\right|,\left|L_2(t)\right|\}\le B_1$ for all $t\in (\xi^{-1}(10),T)$ must have 
 $0\le \delta_2+1\le B_1$. 
\end{lem}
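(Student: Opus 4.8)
The plan is to track the first Riemann curvature eigenvalue $\mu := \xi L_1 + 1 - L_1^2$ (equivalently $\mu = -f_1''/f_1$, non-negative by Proposition \ref{scf1}) along the profile curve on $(\xi^{-1}(10),T)$ and to identify $\lim_{t\to T^-}\mu$ with a multiple of $\delta_2+1$. Writing $\sigma = T-t$ and substituting the expansions \eqref{etaformb}, one finds $\mu(T-\sigma) = \frac{\eta_0(\sigma)-\eta_1(\sigma)}{\sigma} + \eta_0\eta_1 - \eta_1^2 + 1$, so $\lim_{t\to T^-}\mu = \eta_0'(0) - \eta_1'(0) + 1$; feeding the same expansions into \eqref{equationsnewf} and matching the constant term of the equation for $L_1$ forces $\eta_1'(0) = -\tfrac12(\eta_0'(0)+1) = -\tfrac12(\delta_2+1)$, whence
\begin{equation*}
\lim_{t\to T^-}\mu \;=\; \tfrac32\,(\delta_2+1).
\end{equation*}
In particular $\delta_2+1\ge 0$ follows at once from $\mu\ge 0$ (or from Proposition \ref{initialbounds}), so only the upper bound, i.e. $\lim_{t\to T^-}\mu\le \tfrac32 B_1$, requires work.

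Next I would compute the evolution of $\mu$ along the profile curve. Differentiating and using \eqref{equationsnewf}, the cubic-in-$(\xi,L_1)$ terms assemble into $-L_1(\xi-L_1)^2$, and rewriting everything through the identity $\mu-1 = L_1(\xi-L_1)$ gives
\begin{equation*}
\mu' \;=\; -\frac{\mu(\mu-1)}{L_1} \;-\; 2L_2^2 L_1 .
\end{equation*}
This is legitimate because $L_1<0$ throughout $(\xi^{-1}(10),T)$: the hypothesis $\bigl|L_1 + \tfrac1{5+\sqrt{26}}\bigr|\le B_2$ and $\tfrac1{5+\sqrt{26}} = \sqrt{26}-5$ put $L_1$ within $B_2$ of $5-\sqrt{26}<0$ at $\xi^{-1}(10)$, while \eqref{equationsnewf} gives $L_1'=-1$ whenever $L_1=0$, so $L_1$ cannot return to $0$. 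Moreover at $\xi^{-1}(10)$ we have $\mu = 10L_1 + 1 - L_1^2$, which vanishes exactly at $L_1 = 5-\sqrt{26}$ — the value attained at $\xi=10$ along the Gaussian soliton on $\mathbb{R}^2\times\mathbb{S}^2$ — so the hypothesis gives $|\mu|\le 2\sqrt{26}\,B_2 + B_2^2 < 11B_2$ at that orbit.

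Finally I would run a continuity argument. On any subinterval of $(\xi^{-1}(10),T)$ where $\mu<1$ we have $\mu\ge 0$, $\mu-1<0$, $L_1<0$, so $-\mu(\mu-1)/L_1\le 0$ and hence $\mu' \le -2L_2^2 L_1 = 2(-L_1L_2)L_2$, a source term that is quadratically small near the Gaussian. If $\mu$ were to first reach $\tfrac32 B_1$ at some $t_0<T$, then $\mu<\tfrac32 B_1<1$ on $[\xi^{-1}(10),t_0)$ and integrating would give $\tfrac32 B_1 = \mu(t_0) \le 11B_2 + \int_{\xi^{-1}(10)}^{t_0} 2L_2^2|L_1|\,dt$. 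Thus the lemma reduces to the estimate
\begin{equation*}
\int_{\xi^{-1}(10)}^{T} 2L_2^2|L_1|\,dt \;\le\; C B_1^2
\end{equation*}
for a universal constant $C$, which (since $B_2=10^{-146}$, $B_1=10^{-70}$) is incompatible with the previous inequality; hence $\mu<\tfrac32 B_1$ on the whole interval and $\delta_2+1 = \tfrac23\lim_{t\to T^-}\mu \le B_1$. This integral estimate is the main obstacle: near $T$ the crude bound $|L_2|\le B_1$ from Lemma \ref{S22} is not enough because $|L_1|\to\infty$, so one must also use the quantitative asymptotics $|L_2(T-\sigma)| = O(B_1\sigma)$ — the linear coefficient being $\eta_2'(0)=\tfrac12(\delta_3^2-1)$, controlled since $|\delta_3-1|\le B_1$ — together with $|L_1(T-\sigma)|=O(1/\sigma)$, so that the integrand is $O(B_1^2\sigma)$ and integrable at $\sigma=0$; on the complementary compact part of the interval, $\int 2L_2^2|L_1|\,dt\le 2B_1^2\int|L_1|\,dt$ is bounded using concavity of $f_1$ and the continuous dependence in Proposition \ref{mapb}. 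The large gaps between $B_2$, $B_1^2$ and $B_1$ are exactly what make the final comparison go through.
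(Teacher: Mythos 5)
Your argument is correct, but it takes a genuinely different route from the paper's, so a comparison is warranted.

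The paper's proof works with the auxiliary function $x(t)=-\xi(T-t)+L_1(T-t)+t$, which vanishes identically on the Gaussian and satisfies $x(0)=0$, $x'(0)=\tfrac32(\delta_2+1)$. The key estimate there is the differential inequality $x'\ge -25B_1^2t^2+x/t$ (valid while $x\le t$), which forces $x$ to keep growing once $\delta_2+1>B_1$; the contradiction is then read off at $t=T-\xi^{-1}(10)$ using the algebraic identity $L_1-\tfrac1{L_1}=10$ at $L_1=5-\sqrt{26}$, which makes $-\xi+L_1-\tfrac1{L_1}$ vanish on the Gaussian at $\xi=10$. Your proof instead propagates the curvature $\mu=\xi L_1+1-L_1^2$ directly from $\xi^{-1}(10)$ to $T$, using the correct identities $\lim_{t\to T^-}\mu=\tfrac32(\delta_2+1)$, $\mu'=-\mu(\mu-1)/L_1-2L_2^2L_1$, and the observation that $\mu$ also vanishes on the Gaussian at $\xi=10$ (the same quadratic in $L_1$ in a different guise). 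This is in fact closer in spirit to the quantity $Y=\xi L_1+1-L_1^2$ that the paper uses for the coarse $\delta_2\le 10^{20000}$ bound two results later; you are essentially running that argument with the much stronger hypotheses available in Lemma \ref{S23}, which is why the resulting bound is so much sharper.

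The one place you flag as an ``obstacle'' -- the integral $\int_{\xi^{-1}(10)}^T 2L_2^2|L_1|\,dt$ -- in fact closes cleanly with ingredients already in play. From the hypothesis $|R-1|\le B_1$ one gets $|L_2(t)|'\ge -|R^2-1|\ge -\tfrac52 B_1$, hence $|L_2(t)|\le \tfrac52 B_1(T-t)$ on $(\xi^{-1}(0),T)$, and this extends to $(\xi^{-1}(10),T)$ via the hypothesis $|L_2|\le B_1$ and $T-\xi^{-1}(0)>\tfrac25$. Combining with $|L_1|\le 1/(T-t)$ (from $L_1'\le -L_1^2$) gives $2L_2^2|L_1|\le \tfrac{25}{2}B_1^2(T-t)$; integrating over an interval of length $T-\xi^{-1}(10)\le 10+\tfrac\pi2$ (using $\xi'\le -1$ and $T-\xi^{-1}(0)\le\tfrac\pi2$) gives $\int 2L_2^2|L_1|\,dt\le 900B_1^2$. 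With $\mu(\xi^{-1}(10))\le 11B_2$ one concludes $\mu(T)\le 11B_2+900B_1^2\ll\tfrac32 B_1$, so $\delta_2+1\le B_1$ with vast room to spare. Your sketch of using the linear coefficient $\eta_2'(0)=\tfrac12(\delta_3^2-1)$ for the near-$T$ asymptotics is unnecessary once one has the a priori Lipschitz bound on $|L_2|$, and is in fact not strong enough on its own since it is only an asymptotic rather than a global estimate; the Lipschitz route is the one to take.
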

\begin{proof}[Proof of Theorem \ref{S2orbit}]
Using the hypothesis of Theorem \ref{S2orbit}, Lemma \ref{S22} implies that $\left|\delta_3-1\right|\le B_1$ and 
$$\max\{\left|R(t)-1\right|,\left|L_2(t)\right|\}\le B_1$$ for all $t\in (\xi^{-1}(10),T)$. Combining with the hypothesis of 
Theorem \ref{S2orbit}, Lemma \ref{S23} implies that $0\le \delta_2+1\le B_1$. Lemma \ref{S21} then implies that two of the Riemann curvature eigenvalues do not change sign. Since they are non-negative at time $\xi^{-1}(10)$, we obtain that these curvatures are non-negative on $(\xi^{-1}(10),T)$. 
We already know from Propositions 
\ref{scf1} and \ref{sf2p} that the other two Riemann curvature eigenvalues are non-negative. 
\end{proof}
We conclude this section with the proof of the three lemmas. 
\begin{proof}[Proof of Lemma \ref{S21}]
The strategy behind the proof of this lemma is simple enough: check the signs of $-L_1L_2$ and $\xi L_2+1-R^2-L_2^2$ using a Taylor series approximation, and 
show that the error of such an approximation is small enough. The result is obvious if $\delta_3=1$, because then $L_2=0$ and $R=1$ uniformly. Otherwise, consider
the new functions 
$w(t)=-\xi(T-t)-\frac{1}{t}$, $x(t)=-L_1(T-t)+\xi(T-t)$, $y(t)=\frac{-L_2(T-t)}{\delta_3-1}$, $z(t)=\frac{R(T-t)-1}{\delta_3-1}$
so that the vector $u=(w,x,y,z)$ satisfies 
\begin{align}\label{feu}
 u'+\frac{A}{t}u=Bu+C(u)+D
\end{align}
where 
\begin{align*}
  A=\begin{pmatrix}
     2&2&0&0\\
     0&-1&0&0\\
     0&0&1&0\\
     0&0&0&0
    \end{pmatrix},\
    B=\begin{pmatrix}
     0&0&0&0\\
     0&0&0&0\\
     0&0&0&2\\
     0&0&-1&0
    \end{pmatrix},\
    C(u)=\begin{pmatrix}
            -(x+w)^2-2(\delta_3-1)^2y^2\\
            x^2+xw+2(\delta_3-1)^2 y^2\\
            -wy+(\delta_3-1)z^2\\
            -(\delta_3-1)yz
           \end{pmatrix},\
           D=\begin{pmatrix}
            -1\\
            0\\
            0\\
            0
           \end{pmatrix},
    \end{align*}
and we have 
\begin{align*}
 u(0)=(0,0,0,1), \qquad u'(0)=(\delta_2,-\frac{3\delta_2+1}{2},\frac{\delta_3+1}{2},0), \qquad u''(0)=(0,0,0,-\frac{\delta_3(\delta_3+1)}{2}).
\end{align*}

Let $u_{2}=u(0)+u'(0)t+\frac{u''(0)t^2}{2}$ be the second-order Taylor series approximation to the solution, 
so that 
\begin{align*}
 u_2'+\frac{A}{t}u_2-Bu_2-C(u_2)&=D+\begin{pmatrix}
                                     \frac{(\delta_2+1)^2}{4}+\frac{(\delta_3-1)^2(\delta_3+1)^2}{2}\\
                                     -\frac{(3\delta_2+1)(\delta_2+1)}{4}-\frac{(\delta_3-1)^2(\delta_3+1)^2}{2}\\
                                     \frac{(\delta_3+1)(\delta_3+\delta_2)}{2}+\frac{(\delta_3-1)\delta_3(\delta_3+1)}{2}\\
                                     0
                                    \end{pmatrix}t^2+
                                    \begin{pmatrix}
                                     0\\
                                     0\\
                                    0\\
                                     \frac{(\delta_3-1)(\delta_3+1)^2\delta_3}{8}
                                    \end{pmatrix}t^3  
                                    +
                                    \begin{pmatrix}
                                     0\\
                                     0\\
                                    \frac{(1-\delta_3)\delta_3^2(1+\delta_3)^2}{16}\\
                                     0
                                    \end{pmatrix}t^4 
\end{align*}
Therefore, the function $v=u-u_2$ satisfies 
\begin{align*}
 v'+\frac{A}{t}v=Bv+F(t)v+C(v)+E_2t^2+E_3t^3+E_4t^4,\\
 v(0)=0, \qquad v'(0)=0, \qquad v''(0)=0,
\end{align*}
where 
\begin{align*}
 \left|E_2\right|_{\infty}\le 4 \max\{\left|\delta_2+1\right|,\left|\delta_3-1\right|\},\qquad
 \left|E_3\right|_{\infty}\le \max\{\left|\delta_2+1\right|,\left|\delta_3-1\right|\},\qquad
 \left|E_4\right|_{\infty}\le \frac{1}{3} \max\{\left|\delta_2+1\right|,\left|\delta_3-1\right|\},
\end{align*}
 provided $\max\{\left|\delta_2+1\right|,\left|\delta_3-1\right|\}\le 10^{-6}$, and
\begin{align*}
 F(t)&=\begin{pmatrix}
  0&0&0&0\\
  0&0&0&0\\
  0&0&0&2(\delta_3-1)\\
  0&0&-(\delta_3-1)&0
 \end{pmatrix}\\
 &+\begin{pmatrix}
  1+\delta_2&1+\delta_2&-2(\delta_3-1)^2(\delta_3+1)&0\\
  -\frac{3\delta_2+1}{2}&-2\delta_2-1&2(\delta_3-1)^2(\delta_3+1)&0\\
  -\frac{\delta_3+1}{2}&0&-\delta_2&0\\
  0&0&0&\frac{-(\delta_3-1)(\delta_3+1)}{2}
 \end{pmatrix}t+
 \begin{pmatrix}
  0&0&0&0\\
  0&0&0&0\\
  0&0&0&-(\delta_3-1)(\frac{\delta_3+1}{2})\delta_3\\
  0&0&(\delta_3-1)(\frac{\delta_3+1}{4})\delta_3&0
 \end{pmatrix}t^2.
\end{align*}

Therefore, 
\begin{align*}
 \left|v'(t)\right|_{\infty}\le \left|v(t)\right|_{\infty}\left(2.1t+\frac{1}{t}+2.1\right)+\max\{\left|\delta_2+1\right|,\left|\delta_3-1\right|\}\left(4t^2+t^3+\frac{1}{3}t^4\right),
\end{align*}
provided $\left|v\right|_{\infty}\le \frac{1}{100}$, and $\max\{\left|\delta_2+1\right|,\left|\delta_3-1\right|\}\le 10^{-6}$. 
We then find that, for $t\in [0,11]$, 
\begin{align}\label{mainvestimate}
 \frac{\left|v(t)\right|_{\infty}}{t}\le 10^{65}\times \max\{\left|\delta_2+1\right|,\left|\delta_3-1\right|\}\le 10^{-5}, 
\end{align}
since $\max\{\left|\delta_2+1\right|,\left|\delta_3-1\right|\}\le B_1$. 
The estimate \eqref{mainvestimate} gives $y(t)\ge \frac{\delta_3+1}{2}t-\left| v(t)\right|\ge 0$ on $[0,11]$, so $L_2(T-t)$ does not change sign for $t\in [0,11]$. We also find 
that, at the principal orbit $T-t$, 
\begin{align*}
 \frac{\xi L_2+1-R^2-L_2^2}{\delta_3-1}=-\left(\frac{1}{t}+w(t)\right)y(t)-z(t)(2+(\delta_3-1)z(t))-(\delta_3-1)y(t)^2,
\end{align*}
which is sufficiently close to $-2$ for $t\in [0,11]$ by \eqref{mainvestimate}. 
Therefore, these two sectional curvatures do not change sign on $[T-11,T)$. Since $\max\{\left|\delta_2+1\right|,\left|\delta_3-1\right|\}\le B_1$, we can also 
use these estimates to conclude that $-\xi(T-t)$ is close to $\frac{1}{t}-t$, so $\xi^{-1}(10)\in [T-11,T)$, as required.

%% Might be able to get this down to like B_1=10^{30} if I really tried
\end{proof}

\begin{proof}[Proof of Lemma \ref{S22}]
A key quantity to consider is $K(t)^2=L_2(t)^2+(R(t)-1)^2$ because $K'(t)=\frac{-\xi L_2^2+L_2(R-1)}{K(t)}$, so that
\begin{align}\label{L2REvo}
\begin{split}
K(t)\left(-\frac{1}{2}-\max\{0,\xi\}\right) \le K'(t)\le \left(\frac{1}{2}+\max\{0,-\xi\}\right)K(t),\\
K(\xi^{-1}(10))\le \sqrt{2} \delta. 
\end{split}
\end{align}
 Using \eqref{L2REvo} and the fact that $\xi^{-1}(0)-\xi^{-1}(10)\le 10$ (since $\xi'\le -1$), 
we find that 
\begin{align}\label{kestimatesearly}
 K(t)\le \sqrt{2}e^{5}B_2, \ t\in (\xi^{-1}(10),\xi^{-1}(0)].
\end{align}
Now let $t^*\in (\xi^{-1}(0),T-\frac{B_1}{2}]$ be the last time so that 
\begin{align}\label{XE}
 -\xi(t)\le -L_1(t)+\frac{1}{2}\le \frac{1}{T-t}+\frac{1}{2} \ \text{for all}\  t\in [\xi^{-1}(0),t^*). 
\end{align}
Such a $t^*$ must exist because $0\le -L_1\le \frac{1}{T-t}$ everywhere by Proposition \ref{scf1}, so the estimate holds at time $\xi^{-1}(0)$. 
Then using \eqref{L2REvo} and \eqref{XE}, we obtain that $K'(t)\le \left(1+\frac{1}{T-t}\right)K(t)$; integrating from $\xi^{-1}(0)$ to $t^*$ gives 
\begin{align}\label{kestimateotstar}
 K(t)\le \frac{\sqrt{8}B_2e^{T-\xi^{-1}(0)+5}(T-\xi^{-1}(0))}{B_1}, \ t\in (\xi^{-1}(0),t^*).
\end{align}
Now, using the fact that $y'\ge y^2+1$ and $y(\xi^{-1}(0))\ge 0$, where $y=\min\{-\xi,-L_1\}$, we find that $T-\xi^{-1}(0)\le \frac{\pi}{2}$. 
Combining this with \eqref{kestimateotstar} and the definition of $B_1$, we obtain that $K(t)\le \frac{B_1}{10}$ for all $t\in (\xi^{-1}(0),t^*)$.
This implies that, for $t\in (\xi^{-1}(0),t^*)$, we have 
\begin{align*}
 (-\xi+L_1)'&=L_1(-\xi+L_1)+2L_2^2\\
 &\le L_1(-\xi+L_1)+10^{-4}.
\end{align*}
Since $-\xi+L_1$ is negative at time $\xi^{-1}(0)$, and $L_1$ is negative everywhere, we 
find that $-\xi(t)< -L_1(t)+\frac{1}{2}$ for $t\in (\xi^{-1}(0),t^*)$, so $t^*=T-\frac{B_1}{2}$. 

Combining $K(T-\frac{B_1}{2})\le \frac{B_1}{10}$ with $\left|R'\right|\le R^2$ then gives 
\begin{align}\label{efrcts2}
 \left|R(t)-1\right|\le B_1 \  \text{for} \ t\in (T-\frac{B_1}{2},T).
\end{align}
Equation \eqref{efrcts2} combined with \eqref{kestimateotstar} and \eqref{kestimatesearly} then gives
\begin{align}\label{efrcts3}
 \left|R(t)-1\right|\le B_1 \  \text{for} \ t\in (\xi^{-1}(10),T).
\end{align} In particular, we find that $\left|\delta_3-1\right|\le B_1$.

To conclude the proof, it therefore suffices to check the smallness of $L_2$ on $[\xi^{-1}(10),T)$. The required bound for $L_2$ 
on $[\xi^{-1}(10),T-\frac{B_1}{2}]$ follows
from \eqref{kestimatesearly}, \eqref{kestimateotstar} and the fact that $t^*=T-\frac{B_1}{2}$. The required $L_2$ bound on $[T-\frac{B_1}{2},T)$ 
follows from the estimate $\left|L_2(t)\right|'\ge -\left|R(t)^2-1\right|\ge -B_1(R(t)+1)\ge -\frac{5}{2}B_1$ for 
$t\in (\xi^{-1}(0),T)$ coupled with $L_2(T)=0$. 
\end{proof}

\begin{proof}[Proof of Lemma \ref{S23}]
The estimate $\left|R(t)-1\right|\le B_1$ implies that $\left|L_2(t)\right|'\ge -B_1(R(t)+1)\ge -\frac{5}{2}B_1$ for 
$t\in (\xi^{-1}(0),T)$. Therefore, 
$\left|L_2(t)\right|\le \frac{5B_1(T-t)}{2}$ for all $t\in (\xi^{-1}(0),T)$, because $L_2(T)=0$. In fact, we actually obtain 
\begin{align}\label{L240e}
 \left|L_2(t)\right|\le \frac{5B_1 (T-t)}{2} \ \text{for all} \ t\in (\xi^{-1}(10),T)
\end{align}
because $\left|L_2(t)\right|\le B_1$ on $(\xi^{-1}(10),\xi^{-1}(0))$ by Lemma \ref{S22}, and $T-\xi^{-1}(0)>\frac{2}{5}$. This 
estimate on $T-\xi^{-1}(0)$ follows from the fact that $L_2^2+(R-1)^2\le 2B_1^2$ is small at time $\xi^{-1}(0)$, and $L_1(\xi^{-1}(0))\in [-1,0]$ 
(since $\xi L_1+1-L_1^2\ge 0$ everywhere). 

Consider the function $x(t)=-\xi(T-t)+L_1(T-t)+t$. In light of \eqref{L240e}, the function $x$ satisfies 
\begin{align*}
 x'(t)\ge -25B_1^2t^2+\frac{x}{t}, \qquad x'(0)=\frac{3}{2}(\delta_2+1), 
\end{align*}
for all $t$ with $x(t)\le t$. 
If $\delta_2+1>B_1$, we have that $x(t)>\frac{B_1 t}{2}$ for all $t\in (0,12)$. 
Since $0\le -L_1(T-t)\le \frac{1}{t}$, we then find that 
\begin{align}\label{finalestimatexB1}
 -\xi(T-t)+L_1(T-t)-\frac{1}{L_1(T-t)}\ge x(t)>\frac{B_1t}{2}.
\end{align}
Now, the smallness of 
$(R(\xi^{-1}(0))-1)^2+L_2(\xi^{-1}(0))^2$ and the closeness of $L_1(\xi^{-1}(0))$ to $-\frac{1}{5+\sqrt{26}}$ implies that 
$T-\xi^{-1}(10)\ge \frac{1}{2}$. Therefore, \eqref{finalestimatexB1} is in contradiction with
$\left|L_1(T-t)+\frac{1}{5+\sqrt{26}}\right|\le B_2$ 
at time $t=T-\xi^{-1}(10)$. 
\end{proof}

\section{The curve C for large values of $\delta_1$}
In our quest to prove that normalised $SO(2)\times SO(3)$-invariant solitons on $\mathbb{R}^4$ have uniformly bounded Riemann curvature and volume (as well as a
uniform lower bound on the injectivity radius), it is necessary 
to find uniform bounds for the $\delta_1,\delta_2,\delta_3$ numbers discussed in Propositions \ref{mapa} and \ref{mapb}. This section is 
dedicated to the proof of Theorem \ref{d1large} below, which combines with Theorem \ref{S2orbit} above to produce the required bound for $\delta_1$ (recall that a Ricci soliton on 
$\mathbb{S}^4$ with non-negatve Riemann curvature operator must be round). 

\begin{thm}\label{d1large}
Suppose we have an $SO(2)\times SO(3)$-invariant gradient shrinking Ricci soliton on $\mathbb{S}^4$ of the form \eqref{metricform}. 
 Consider the quantities $\xi,L_1,L_2,R$ associated to the soliton which satisfy \eqref{equationsnewf}, and suppose $\delta \in (0,10^{-25})$ and 
 $\delta_1>\frac{1}{\delta^{120}}$. Then there is a principal orbit with $\xi=10$,
 $\left|L_1+\frac{1}{5+\sqrt{26}}\right|\le  \delta$, $\left|R-1\right|\le \delta$, $\left|L_2\right|\le \delta$, 
 and so that the Riemann curvature is non-negative between this 
 principal orbit and the $\mathbb{S}^1$ orbit. 
\end{thm}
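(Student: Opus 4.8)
\medskip
\noindent\emph{Proof proposal.}
The plan is to take $\xi$ itself as the independent variable. By \eqref{equationsnewf} one has $\xi'=-(L_1^2+2L_2^2+1)\le-1$, so $\xi$ is a strictly decreasing smooth coordinate, running from $+\infty$ at the $\mathbb{S}^1$ orbit (where $\xi\sim2/t$ by \eqref{etaforma}) down through $\xi=10$; dividing the remaining three equations by $\xi'$ recasts \eqref{equationsnewf} as
\[
 \frac{dL_1}{d\xi}=\frac{\xi L_1+1}{L_1^2+2L_2^2+1},\qquad
 \frac{dL_2}{d\xi}=\frac{\xi L_2-R^2+1}{L_1^2+2L_2^2+1},\qquad
 \frac{dR}{d\xi}=\frac{L_2R}{L_1^2+2L_2^2+1}.
\]
I would analyse the trajectory on $\xi\in[10,\infty)$ in two regimes: a long ``transition'' regime $\xi\ge\Xi_0$, on which the geometry near the $\mathbb{S}^1$ orbit, rescaled by its (large, of order $\delta_1$) curvature, is modelled on the Bryant steady soliton on $\mathbb{R}^3$ crossed with a flat circle; and a \emph{fixed-length} ``Gaussian'' regime $\xi\in[10,\Xi_0]$, for a suitable absolute constant $\Xi_0$, on which $(L_1,L_2,R)$ is a tiny perturbation of the Gaussian soliton data on $\mathbb{R}^2\times\mathbb{S}^2$, i.e.\ the solution with $L_2\equiv0$, $R\equiv1$, $\xi=s-1/s$, $L_1=-1/s$ from the Remark following Theorem \ref{S2orbit}.

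First I would reduce the curvature statement. By Propositions \ref{scf1} and \ref{sf2p} the eigenvalues $\xi L_1+1-L_1^2$ and $R^2-L_2^2$ are automatically non-negative, and $L_1<0$ on $(0,T)$ (if $L_1=0$ then $L_1'=-1$), while $\xi\ge10$ on $(0,\xi^{-1}(10))$, so Proposition \ref{scf1} moreover forces $|L_1|<\tfrac1{10}$ there. Hence it suffices to show $L_2\ge0$ and $P:=\xi L_2+1-R^2-L_2^2=-f_2''/f_2\ge0$ on $(0,\xi^{-1}(10))$. For the latter I would use the identity, derived from \eqref{equationsnewf},
\[
 P'=(L_2-\xi)P+L_2\,(R^2-L_2^2-L_1^2);
\]
since $P\to 6\delta_1>0$ as $t\to0$ (a direct computation from \eqref{etaforma}, \eqref{ivpa}), a first zero of $P$ would require $L_2(R^2-L_2^2-L_1^2)\le0$ there, which is impossible once $L_2\ge0$ and $R^2-L_2^2>L_1^2$ (the latter holds because $R^2-L_2^2=R^2(1-(f_2')^2)\ge0$ everywhere by Proposition \ref{sf2p}, is of order $\delta_1$ in the transition regime and $\approx1$ in the Gaussian regime, while $|L_1|<\tfrac1{10}$, and near $t=0$, where $f_2'$ is close to $1$, the term $L_1^2$ is negligible so there is no conflict). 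For $L_2\ge0$: if $R>1$ everywhere on the interval then at any point with $L_2=0$ one has $L_2'=R^2-1>0$, so $L_2$ cannot cross zero downward; and $R>1$ on $(0,\xi^{-1}(10))$ is exactly $f_2<1$ there, which is what the Bryant comparison must deliver.

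In the transition regime, rescaling the metric near the $\mathbb{S}^1$ orbit by its curvature scale, the solution converges on longer and longer $\xi$-intervals (as $\delta_1\to\infty$) to $\mathrm{Bryant}\times\mathbb{S}^1$, for which $f_2$ is the Bryant profile (monotone increasing, concave away from the tip), $f_1$ is constant so $L_1\equiv0$, and $R,L_2\to0$ far from the tip; feeding this back, for $\delta_1$ large $f_2$ increases monotonically and stays $<1$ on all of $(0,\xi^{-1}(10))$ — the $\lambda=1$ term attracts $f_2$ to $1$ from below before it can overshoot — so $R>1$, $L_2>0$, $f_2$ is concave, and the curvature is non-negative by the previous paragraph. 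Quantitatively, writing $a=1-f_2$, $b=L_2$, once $R\approx1$ the pair obeys $a'=-b$, $b'\approx-\xi b+2a$, which relaxes only polynomially (like $a\propto\xi^{2}$ as $\xi$ decreases), while $K:=\sqrt{L_2^2+(R-1)^2}$ satisfies $KK'=-\xi L_2^2+L_2(R-1)$ and is driven to $0$; since the transition interval $[\Xi_0,\xi(f_2\approx1)]$ has length of order $\sqrt{\delta_1}$, this bounds $K$ and $|L_1+1/s_0|$ at $\xi=\Xi_0$ (where $s_0-1/s_0=\Xi_0$) by a negative power of $\delta_1$, and the hypothesis $\delta_1>\delta^{-120}$ (with $\delta<10^{-25}$) is exactly the room needed to push both below a threshold $\varepsilon_0(\delta,\Xi_0)$. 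Then on the fixed interval $\xi\in[10,\Xi_0]$, $\xi\ge10$ and $|R-1|$ small force $|L_2|$ small via $L_2'=-\xi L_2+R^2-1$ (this is the mechanism of Lemma \ref{S22}, run between $\xi=\Xi_0$ and $\xi=10$), and $R'=-L_2R$ then keeps $|R-1|\le\delta$, $|L_2|\le\delta$; with $L_2$ negligible, $dL_1/d\xi$ is a $\delta$-perturbation of the scalar Gaussian equation $dL_1/d\xi=(\xi L_1+1)/(L_1^2+1)$, whose solution passing $\varepsilon_0$-close to $-1/s_0$ at $\xi=\Xi_0$ reaches $-1/(5+\sqrt{26})$ at $\xi=10$, so a Gronwall estimate on the \emph{fixed} interval $[10,\Xi_0]$ (hence with a fixed constant) gives $\bigl|L_1+\tfrac1{5+\sqrt{26}}\bigr|\le\delta$.

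The main obstacle is the transition regime: making the Bryant comparison quantitative enough to conclude $f_2<1$ — equivalently $R>1$, equivalently $L_2\ge0$ — on \emph{all} of $(0,\xi^{-1}(10))$, and extracting the explicit polynomial threshold on $\delta_1$; this is where one expects to either run an effective blow-up argument against a rescaling of $\mathrm{Bryant}\times\mathbb{S}^1$, or, more hands-on, track the full system by barriers and monotonicity. Once that is in place, the curvature bookkeeping via the $P'$ identity together with Propositions \ref{scf1} and \ref{sf2p}, and the Gronwall comparison with the scalar Gaussian equation, are routine, and the exponent $120$ is simply what absorbs the fixed Gronwall constant together with the polynomial rate at which $K$ and $|L_1+1/s_0|$ decay during the transition.
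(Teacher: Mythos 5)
Your overall architecture matches the paper's: you correctly reduce the curvature statement to $L_2\ge 0$ and $P:=\xi L_2+1-R^2-L_2^2\ge 0$ (the other two Riemann eigenvalues being handled by Propositions \ref{scf1}, \ref{sf2p}), your identity $P'=(L_2-\xi)P+L_2(R^2-L_2^2-L_1^2)$ is correct and is, up to the reparametrization $E=P/R^2$ and $d\tau=\xi\,dt$, the same as the paper's $E'$ identity in \eqref{usefulquantities}, and you correctly identify the two comparison models (Bryant$\times\mathbb{S}^1$ in the transition regime, Gaussian $\mathbb{R}^2\times\mathbb{S}^2$ near $\xi=10$) and the mechanism $L_2=0\Rightarrow L_2'=R^2-1>0$ whenever $R>1$. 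The paper encodes the same picture in the variables $w=L_1$, $x=L_2/R$, $y=R/\xi$, $z=1/R$ with time rescaled by $yz$; in those coordinates the trajectory is the unstable manifold of $(0,1,\tfrac12,0)$ chased through the critical points $(0,0,0,0)$ and the line $(0,0,0,z)$, and $R>1$ becomes $z<1$, which is a constraint built into the barrier set.

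The gap is that you leave the entire quantitative heart of the theorem unproved, and acknowledge it. The paper's proof of Theorem \ref{d1large} is the content of the three lemmas after \eqref{usefulquantities}: an explicit Bryant-comparison lemma on the rescaled variables (giving $B_3^{12}$-closeness of $z$ to $0$ and $B_3^6$-closeness of $(x,y)$ to $(x^{(\infty)},y^{(\infty)})$ at time $\tfrac19$ from $\delta_1\ge B_3^{-20}$), a middle lemma tracking the solution along the line of critical points and showing $z<1$ is maintained, and a final lemma landing the trajectory near the Gaussian data at $yz=\tfrac1{10}$ with $|D+1|\le\delta/3$; the exponent $120$ emerges as $6\times 20$ from composing these hypotheses, not from a Gronwall constant on a fixed interval as you suggest. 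Your heuristic that the transition has "length of order $\sqrt{\delta_1}$" and that $a\propto\xi^2$ is in the right ballpark but is not a proof that $R>1$ holds all the way to $\xi^{-1}(10)$, nor that $R^2-L_2^2>L_1^2$ holds in the \emph{middle} of the transition (near $t=0$ it is $\approx 6\delta_1$, but further along it drops to $\approx 1$, so you need explicit bounds on $|L_1|,|L_2|,|1/R|$ there — exactly what the paper's $t^*$-barrier and the estimates $|x|\le\tfrac1{10}$, $|w|\le\tfrac12$, $z\in(0,1)$ supply). In short, you have correctly reconstructed the scaffolding and the key identities, but the proof of the statement is the quantitative Bryant/heteroclinic analysis, and that is missing.
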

When proving this theorem, it is useful to again change variables with $w=L_1$, $x=\frac{L_2}{R}$, $y=\frac{R}{\xi}$, $z=\frac{1}{R}$ so that 
\begin{align}\label{firstchangebeforescaling}
\begin{split}
 w'&=\frac{-w-yz}{yz},\\
 x'&=\frac{-x+y-yz^2+x^2y}{yz}, \\
 y'&=\frac{-xy^2+y^3(w^2z^2+2x^2+z^2)}{yz},\\
 z'&=\frac{xyz}{yz}.
 \end{split}
\end{align}
Therefore our study essentially reduces to the analysis of the integral curves of the following system of equations:
\begin{align}\label{FirstChange}
\begin{split}
 w'&=-w-yz,\\
 x'&=-x+y-yz^2+x^2y, \\
 y'&=-xy^2+y^3(w^2z^2+2x^2+z^2),\\
 z'&=xyz.
 \end{split}
\end{align}
In these co-ordinates, the non-negativity of $-L_1L_2$ and $\xi L_2+1-R^2-L_2^2$ is implied by the non-negativity 
of $x$ and $\frac{x}{y}+z^2-1-x^2$ (since we already know that $L_1\le 0$). The solutions we are interested in are part of the two-dimensional unstable manifold of the critical point of \eqref{FirstChange} at 
$(0,1,\frac{1}{2},0)$; the $\delta_1$ parameter now describes the initial direction of travel through this two-dimensional unstable manifold, and is given 
precisely by $\delta_1=\lim_{t\to 0}\frac{\frac{1}{z(t)}-\frac{1}{t}}{t}$ for the solutions of \eqref{firstchangebeforescaling}. 

In this section, we are only interested in evolving the system \eqref{firstchangebeforescaling} up until the time $\xi^{-1}(10)$, so we can assume that $y\in (0,\infty)$. It is handy to keep track of the 
evolution of the quantities $C=\frac{x}{y(1-z)}$, $D=\frac{w}{y}-w^2$ and $E=\frac{x}{y}+z^2-1-x^2$. Indeed, the closeness of $D$ to $-1$ measures the Gaussian 
structure, positivity of $E$ ensures positivity of the last sectional curvature, and $C$ is an analogue of a quantity that arises in the construction of the Bryant soliton. We find that 
\begin{align}\label{usefulquantities}
\begin{split}
 C'&=-C+(1+z)+Cxy+C(xy-y^2(w^2z^2+2x^2+z^2))+C^2y^2z\\
 D'&=(-D-1)(1-yw)+\tilde{D}\\
 E'&=xy\left(1-w^2z^2-x^2\right)+\tilde{E},
 \end{split}
\end{align}
where $\left|\tilde{D}\right|\le 3\left|z-1\right|+3\left|x\right|$ whenever $0\le x,y,-w,z\le 1$, 
and $\tilde{E}=0$ whenever $E=0$ and $y\in (0,\infty)$. 
Therefore, to prove Theorem \ref{d1large}, it suffices to prove the following:
\begin{thm}\label{delta1largesimple}
 Choose $\delta\in (0,10^{-25})$. If we have a Ricci soliton with $\delta_1\ge \frac{1}{\delta^{120}}$, 
 then the corresponding trajectory of \eqref{FirstChange} which $\alpha$-limits to $(0,1,\frac{1}{2},0)$ includes a point at 
 which $yz=\frac{1}{10}$, $ 1-\frac{\delta}{3}\le z\le 1$, $0\le -w\le 1$, $\left|D+1\right|\le \frac{\delta}{3}$ and $x\le \frac{\delta}{3}$, and so that $\min\{x,E\}\ge 0$ on the trajectory up until 
 that point. 
\end{thm}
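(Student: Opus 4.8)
\noindent\emph{Proof proposal.} The plan is to follow the trajectory of \eqref{FirstChange} emanating from the critical point $(0,1,\tfrac12,0)$ (the $\s^1$ orbit) and to argue that, once $\delta_1$ is enormous, it first traverses a parabolically rescaled ``Bryant'' region concentrated near the $\s^1$ orbit and then settles into a small perturbation of the Gaussian solution ($f_1$ affine, $f_2\equiv1$) down to $\xi=10$; the four desired conclusions are then read off at $\xi=10$. Note that $yz=(R/\xi)(1/R)=\xi^{-1}$, so ``$yz=\tfrac1{10}$'' is exactly the $\xi=10$ orbit, and that there $0\le-w\le1$ is automatic from Proposition \ref{scf1}: since $-f_1''/f_1=\xi L_1+1-L_1^2\ge0$ and $L_1\le0$, we get $w=L_1\in[\tfrac{\xi-\sqrt{\xi^2+4}}2,0]\subset[-1,0]$ for every $\xi\ge0$.

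First I would analyse the departure from $(0,1,\tfrac12,0)$. Linearising \eqref{FirstChange} there gives eigenvalues $-1,-\tfrac12,\tfrac12,1$, with the unstable manifold tangent to $\spn\{(0,2,1,0),(-\tfrac13,0,0,1)\}$. Matching with the boundary expansion \eqref{etaforma}--\eqref{ivpa} and with $\eta_2'(0)=-2\eta_3'(0)=-2\delta_1$ (from the proof of Proposition \ref{initialbounds}) shows that along the $\delta_1$-trajectory the coefficient of the $\lambda=\tfrac12$ mode is pinned to $1$ while the coefficient of the $\lambda=1$ mode is of order $\delta_1$; hence the trajectory leaves a fixed small ball about $(0,1,\tfrac12,0)$ with $w,z=O(\delta_1^{-1/2})$, with $x,y$ displaced in the $-(2,1)$ direction, and with $x>0$, $E>0$ (indeed $E=6\delta_1t^2+O(t^3)$ near the $\s^1$ orbit) -- a $\delta_1$-quantitative version of the local analysis of \cite{Buzano}. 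Next, undoing \eqref{firstchangebeforescaling} and rescaling the $\R^3$-block $dt^2+f_2^2Q$ by $\sqrt{6\delta_1}$ (i.e.\ $\hat t=\sqrt{6\delta_1}\,t$, $\hat f_2=\sqrt{6\delta_1}\,f_2$) converts the $\lambda=1$ soliton system into the same system with $\lambda=(6\delta_1)^{-1}$, and the $f_1$-block decouples because $-f_1''/f_1$ and $-L_1L_2$ remain $O(1)\ll\delta_1$. So as $\delta_1\to\infty$ the rescaled solution converges in $C^1_{\mathrm{loc}}$ to a flat line times the unique rotationally symmetric steady soliton on $\R^3$, namely the Bryant soliton of unit tip curvature, whose expansion $\hat f_2=\hat t-\tfrac16\hat t^3+\cdots$ matches the data above. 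Since the Bryant soliton has positive curvature operator, $\hat f_2'\downarrow0$, and curvature of order $\hat t^{-1}$, I would transfer these facts back -- with the rate from the previous step, propagated over $\hat t\lesssim\delta_1$, on which the $(6\delta_1)^{-1}$ term is negligible -- to obtain: up to the orbit $t_1$ where $\xi$ first equals $\sqrt{\delta_1}$ one has $\min\{x,E\}>0$, $0<x\le1$, and $z$ increasing with $z<1$; and at $t_1$ one has $z=1+O(\delta_1^{-c})$, $x=O(\delta_1^{-c})$, $|D+1|=O(\delta_1^{-c})$ and $C:=\tfrac{x}{y(1-z)}=2+O(\delta_1^{-c})$ for a fixed $c>0$. (The normalisation is what forces $z\to1$: the rescaled Bryant curvature reaches $(6\delta_1)^{-1}$, i.e.\ unity in the original scale, at $\hat t$ of order $\delta_1$, where $\hat f_2\sim\sqrt{\hat t}$, so $f_2\sim1$.)

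Then, on $\xi\in[10,\sqrt{\delta_1}]$, the trajectory started at $t_1$ from data $O(\delta_1^{-c})$-close to the Gaussian point $(L_1,0,\xi^{-1},1)$ should remain close. In \eqref{usefulquantities} the factor $1-yw\ge1$ drives $D+1\to0$, and $C'=-C+(1+z)+O(xy)+O(y^2)$ drives $C\to2$; feeding $C\approx2$ into $z'=xyz=Cy^2z(1-z)$ and using $y\approx\xi^{-1}$, $\xi'\approx-1$ gives $1-z\propto\xi^2$, hence $z=1-O(\delta_1^{-1})$, $x=Cy(1-z)=O(\delta_1^{-1})$ and $|D+1|=O(\delta_1^{-1})$ at $\xi=10$, with $z<1$ throughout so that $z\le1$ up to $\xi=10$. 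Along the way $x\ge0$ is preserved because $x'=y(1-z^2)\ge0$ when $x=0$ and $z\le1$, and $E\ge0$ is preserved because $\tilde E=0$ when $E=0$ while $xy(1-w^2z^2-x^2)>0$ there (using $z\le1$, $x=O(\delta_1^{-c})$ and $|w|\le\sqrt{26}-5$ for $\xi\ge10$); so $\min\{x,E\}\ge0$ on the entire trajectory from the $\s^1$ orbit to the $\xi=10$ orbit. Since $\delta_1\ge\delta^{-120}$ and $\delta<10^{-25}$, every $O(\delta_1^{-c})$ and $O(\delta_1^{-1})$ error above is at most $\tfrac\delta3$, giving $1-\tfrac\delta3\le z\le1$, $0\le x\le\tfrac\delta3$, $|D+1|\le\tfrac\delta3$ at $\xi=10$, as required.

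I expect the main obstacle to be the Bryant region: obtaining the convergence to the Bryant soliton with an error that is only polynomially small in $\delta_1$ yet survives propagation over the expanding interval $\hat t\lesssim\delta_1$ -- this is where the linearised Bryant operator must be controlled and where the exponent $120$ in the hypothesis gets spent -- together with fixing the normalisation so that $z\to1$ rather than merely $z\to\mathrm{const}$. A secondary difficulty is keeping the two borderline sectional curvatures $-L_1L_2$ and $\xi L_2+1-R^2-L_2^2$ (i.e.\ $x$ and $E$) non-negative at the $\s^1$ orbit and at the Bryant-to-Gaussian transition, where both vanish to leading order; here the precise form of \eqref{usefulquantities}, in particular $\tilde E\equiv0$ on $\{E=0\}$, is exactly what makes the sign-preservation arguments go through.
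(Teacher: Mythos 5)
Your overall strategy -- rescale by a power of $\delta_1$ to compare to the Bryant soliton, then transfer estimates back and propagate them to the $\xi=10$ orbit, using the equations for $C$, $D$, $E$ from \eqref{usefulquantities} to preserve curvature signs -- is the same as the paper's, and your linearisation of \eqref{FirstChange} at $(0,1,\tfrac12,0)$ (eigenvalues $-1,-\tfrac12,\tfrac12,1$ with the stated unstable eigenvectors) and your observation that $w\in[-1,0]$ follows directly from $\xi L_1+1-L_1^2\ge0$ are both correct.

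There is, however, a genuine gap at the checkpoint. You assert that at the first orbit where $\xi=\sqrt{\delta_1}$ one already has $z=1+O(\delta_1^{-c})$ (and $C\approx2$, $|D+1|$ small), and you then propagate Gaussian-closeness on $\xi\in[10,\sqrt{\delta_1}]$. But for a perturbation of the Bryant that stays close until the steady curvature scale drops to order one, the orbit where $\xi\sim\sqrt{\delta_1}$ still has $z$ \emph{small}, not close to $1$. Concretely, in the paper's decomposition the trajectory leaves the Bryant comparison near a point with $0\le -w,x,y,z\le B_3$ (roughly $\delta_1^{-1/20}$), and that point still has $\xi=1/(yz)$ at least of order $\sqrt{\delta_1}$; the growth of $z$ from $O(B_3)$ to $1-\frac\delta3$ happens only \emph{afterwards}, as the trajectory creeps along the line of critical points $(0,0,0,z)$ while $y$ remains tiny and then abruptly grows near $\xi=10$. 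This transitional phase is where most of the delicate work lives (Lemma \ref{0tofinal}: the estimate $y(1-z)^{3.75}\le\delta^6/10$, the confinement $C\in[0.9,2.4]$, and the timing argument forcing $|D+1|\le\frac\delta3$ by the time $y$ reaches $[\tfrac1{10},\tfrac19]$), and your proposal collapses it into the endpoint of the Bryant phase. Without a separate argument for this middle regime -- $y,z$ both small, $\xi$ very large, $z$ slowly rising -- the Gaussian-propagation step has the wrong initial data. You also honestly flag, without resolving, the harder quantitative issue of making the Bryant convergence polynomial in $\delta_1$ and stable over a long interval; the paper spends two lemmas on exactly that (tracking $V=\max\{|Y|,2|X-Y|\}$ through $A^*\cup B^*\cup C^*$ after a linear ODE comparison in rescaled variables). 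So the proposal is a correct outline of the strategy but is missing the central intermediate analysis and rests on a checkpoint claim that appears to be false.
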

To proof of this theorem essentially follows by noting the behaviour of the unstable trajectory coming out of $(0,1,\frac{1}{2},0)$ as $\delta_1$ 
becomes large:
\begin{itemize}
 \item the trajectory travels from $(0,1,\frac{1}{2},0)$, and gets close to the critical point $(0,0,0,0)$;
 \item the trajectory then travels close to the line of critical points $(0,0,0,z)$ for $z\in [0,1]$;
 \item the trajectory then breaks away from this line of critical points as $y>0$ begins to grow while $z$ stays close to $1$, and we get a point satisfying the conclusion of 
 Theorem \ref{delta1largesimple}. 
\end{itemize}
To prove Theorem \ref{delta1largesimple}, we proceed working backwards, and start by determining how close the unstable trajectory 
needs to be to touching the point $(w,x,y,z)=(0,0,0,0)$ in order 
to get the desired conclusion. 
\begin{lem}\label{0tofinal}
 Choose $\delta\in (0,10^{-25})$ and suppose our trajectory of \eqref{FirstChange} includes a point with  
 $0\le -w,x,y,z\le \delta^6$, $\left|\frac{y}{x}-1\right|\le  \delta^6$, $\left|D\right|\le 1$ and $E\ge 0$. 
 Then the trajectory includes a later point at which $yz=\frac{1}{10}$, 
 $ 1-\frac{\delta}{3}\le z\le 1$, $0\le -w\le 1$, $\left|D+1\right|\le \frac{\delta}{3}$ and $x\le \frac{\delta}{3}$. 
 Furthermore, $\min\{E,x\}\ge 0$ on the trajectory between these two points. 
\end{lem}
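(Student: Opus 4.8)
The plan is to follow the integral curve of \eqref{FirstChange} forward from the given point $p_0$ (at which $0\le -w,x,y,z\le\delta^6$, $\left|\tfrac yx-1\right|\le\delta^6$, $\left|D\right|\le1$ and $E\ge0$) through the long stretch during which it shadows the segment of rest points $\{(0,0,0,z):z\in[0,1]\}$, stopping at the first time $t_1$ with $yz=\tfrac1{10}$. There are four ingredients: a trapping region fixing the signs and crude sizes of all four variables; the fact that $t_1$ exists; an approximate conservation law forcing $z(t_1)$ close to $1$; and Gronwall estimates pinning down $x$ and $D$ at $t_1$.

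\emph{Trapping region and existence of $t_1$.} First I would show that on $[0,t_1]$ the curve stays in the region where $y>0$, $0<z<1$, $x\ge 0$, $E\ge 0$, $0\le -w\le 1$, $\tfrac12\le C\le 3$ with $C=\tfrac{x}{y(1-z)}$, and $w^2z^2+x^2\le 1$; in fact, since $w$ and $x$ relax at rate $\approx -1$ in \eqref{FirstChange}, the curve is attracted to and stays $O(\delta^6)$-close to the slow manifold on which $w=-yz$ and $x=y(1-z^2)+O(y^3)$. Each sign/size bound is a maximum principle argument on the relevant face using \eqref{FirstChange}--\eqref{usefulquantities}: $x'=y(1-z^2)\ge0$ when $x=0$; $E'=xy(1-w^2z^2-x^2)\ge0$ when $E=0$ since $\tilde E$ vanishes there; $w'=-yz\le0$ when $w=0$ while $(-w)'=w+yz$ keeps $-w\le\max\{\delta^6,\sup_{[0,t_1]}yz\}\le\tfrac1{10}$; $z'=xyz>0$ keeps $z$ increasing while $\tfrac{d}{dt}\log(1-z)=-Cy^2z$ keeps $1-z$ positive on a finite interval; and $C$ relaxes into a bounded band because $C'=-C+(1+z)$ up to terms controlled by $xy$ and $y^2$. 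Since $\tfrac{d}{dt}\log(yz)=y^2(w^2z^2+2x^2+z^2)\ge0$, $yz$ is nondecreasing; and using slow-manifold closeness, $y'=y^3\big(z^2-\tfrac xy+O(y^2)\big)$ is negative while $z^2<\tfrac12$, so $y\le\delta^6$ there and hence $y\le\tfrac15$ on all of $[0,t_1]$. For the existence of $t_1$: if $yz<\tfrac1{10}$ for all $t\ge0$, the trapping region makes the curve bounded and globally defined, and since $\tfrac{d}{dt}\log(yz)$ is nonnegative and integrable it tends to $0$ along the curve, so the $\omega$-limit set lies in $\{y=0\}\cup\{w=x=z=0\}$ and the flow on $\{y=0\}$ pushes it into the rest segment; hence $y(t)\to0$, contradicting that $yz$ is nondecreasing with $y(0)z(0)>0$.

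\emph{Closeness of $z$, $x$ and $D$ at $t_1$.} The engine of the proof is the observation that, using $z'=xyz$ and $x=Cy(1-z)$,
\begin{equation*}
\frac{d}{dt}\Big[\log(1-z^2)+2\log(yz)\Big]=2y^2\Big(w^2z^2+2x^2+z^2-\tfrac{Cz^2}{1+z}\Big),
\end{equation*}
whose right-hand side is $O(y^4)$ on the slow manifold (where $w\approx-yz$, $x\approx y(1-z^2)$, $C\approx 1+z$). Tracking the deviations from these slow relations — each obeys a linear ODE that contracts because of the $-1$-type diagonal terms — one bounds $\int_0^{t_1}y^2\big|w^2z^2+2x^2+z^2-\tfrac{Cz^2}{1+z}\big|\,dt$ by $O(1)$ (and, away from the $z$-tiny transient, by something small), so that $(1-z(t_1)^2)\cdot\tfrac1{100}\le e^{O(1)}(1-z_0^2)(y_0z_0)^2\le e^{O(1)}\delta^{24}$, whence $1-z(t_1)\le 1-z(t_1)^2\le\tfrac\delta3$. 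Then $x(t_1)=C\,y(t_1)(1-z(t_1))\le\tfrac3{10\,z(t_1)}(1-z(t_1))\le\tfrac\delta3$, using $y(t_1)=\tfrac1{10z(t_1)}$. For $D$, since $yw\le0$ we get $\tfrac{d}{dt}\left|D+1\right|\le-\left|D+1\right|+\left|\tilde D\right|\le-\left|D+1\right|+3(1-z)+3x$, so variation of parameters gives $\left|D(t_1)+1\right|\le e^{-t_1}\left|D_0+1\right|+\int_0^{t_1}e^{-(t_1-s)}\big(3(1-z)+3x\big)\,ds$; this is $\le\tfrac\delta3$ because $t_1$ is enormous (the curve lingers near the rest segment, so $t_1\gg\log(30/\delta)$) while $1-z$ and $x$ are already far below $\delta$ on the final stretch $[t_1-\log(30/\delta),t_1]$ by the previous bounds and the monotonicity of $1-z$. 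The remaining conclusions ($yz=\tfrac1{10}$, $0\le -w\le1$, $z\le1$, and $\min\{E,x\}\ge0$ on $[0,t_1]$) are immediate from the preceding steps.

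\emph{Main obstacle.} Everything hinges on the error analysis behind the near-conservation of $(1-z^2)(yz)^2$: one must (a) keep $y$ genuinely of order $\delta^6$ while $z$ is away from $1$, so that the slow-manifold relations hold with $O(\delta^6)$ error; (b) show the accumulated error integral stays bounded over the (very long) time the curve spends near the rest segment — equivalently, quantify exactly how slowly it breaks away from $(0,0,0,1)$; and (c) carry the numerology through the many Gronwall steps so that the $O(\delta^6)$ input data yields the required $\tfrac\delta3$-type output with room to spare (which is comfortable, since the bound one actually gets on $1-z(t_1)$ is of order $\delta^{24}$). I expect essentially all the work to sit in (a)--(b); the trapping region, the existence of $t_1$, and the final $x$- and $D$-estimates are comparatively routine.
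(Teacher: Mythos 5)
Your argument is correct in outline but takes a genuinely different route from the paper's, and the two are worth contrasting. The paper does \emph{not} use your exact differential identity for $\log(1-z^2)+2\log(yz)$; instead it works with the time variable $ds=y^2\,dt$, proves the crude band $C\in[0.9,2.4]$, and derives the Lyapunov-type inequality $y(1-z)^{3.75}\le\delta^6/10$ on the post-transient interval (the exponent $3.75$ is chosen so that the competing rates $\frac{dy}{ds}\le 1.5y$, $\frac{d(1-z)}{ds}\le-0.4(1-z)$ just balance). That crude estimate already yields $1-z\le\delta^{1.6}<\delta/200$ near $t^*$, which is plenty; it also makes the subsequent $x$- and $D$-estimates routine, because $C$ only needs to be bounded away from $0$ and $\infty$, not pinned near $1+z$. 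Your near-conservation of $(1-z^2)(yz)^2$ is sharper (it gives $1-z(t_1)=O(\delta^{24})$) and is arguably a cleaner conceptual explanation of why the trajectory breaks away from the rest segment only when $z$ is extremely close to $1$; but it requires the much finer slow-manifold control $C=1+z+O(\delta^6)+O(\sup y^2)$ (and analogous control on $w+yz$ and on $x-y(1-z^2)$) in order to make $\int y^2\bigl|w^2z^2+2x^2+z^2-\tfrac{Cz^2}{1+z}\bigr|\,dt=O(1)$ rigorous, whereas the paper gets away with only $C\in[0.9,2.4]$. So you are trading a coarser, shorter Gronwall computation for a sharper one that needs more bookkeeping. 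Two smaller points worth noting: (i) the paper phrases its terminal time as $t^*=y^{-1}(1/9)$ and then locates $t_1$ with $yz=1/10$ inside $I=y^{-1}([1/10,1/9])$ by the intermediate value theorem, which gives a cleaner quantitative handle (via $y'\le 1.5y^3$) on how long the trajectory lingers before $t_1$ — this is precisely what you need for the $e^{-t_1}|D_0+1|$ term and for the final window $[t_1-\log(30/\delta),t_1]$, which your sketch only asserts; (ii) your $\omega$-limit argument for the existence of $t_1$ implicitly uses $z_0>0$, which is fine in the context where this lemma is applied but is not quite implied by the hypothesis $0\le z\le\delta^6$ as stated. Neither of these is a fatal flaw, but the bulk of the work you have deferred to ``(a)–(b)'' is exactly where the paper spends its effort, just organized around $y(1-z)^{3.75}$ rather than around your conservation law.
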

\begin{proof}
We can assume that we have a solution of \eqref{FirstChange} so that $0$ is the time of the first point.

\textbf{Step One: construct an interval $(0,t^*)$ on which we expect to find the required terminal point; find some basic estimates.}
 Define $t^*>0$ so that 
 \begin{align}\label{definitiontstart}
  (0,t^*) \ \textit{is the maximal time interval on which} \ \left|x\right|\le \frac{1}{10}, \left|y\right|\le \frac{1}{9}, \left|w\right|\le \frac{1}{2} \ \textit{and} \ 
  z\in (0,1). 
 \end{align}
 On such an interval, \eqref{usefulquantities} implies that 
 \begin{align*}
  -C+1-\frac{C}{10}\le C'\le -C+2+\frac{C^2}{50}+\frac{C}{10},
 \end{align*}
 so since $\left|C(0)-1\right|<10^{-3}$, we find that $C(t)\in[0.9,2.4]$ for all $t\in (0,t^*)$. 
The equations for $y$ and $z$ in \eqref{FirstChange} can then be written 
\begin{align}\label{yztmid}
\begin{split}
\frac{y'}{y^2}&=y\left(-C(1-z)+w^2z^2+2C^2y^2(1-z)^2+z^2\right),\\
 \frac{z'}{y^2}&=Cz(1-z).
 \end{split}
\end{align}
Now we claim that $t^*>z^{-1}(\frac{1}{2})$. Indeed, \eqref{yztmid} and the fact that 
$y(0)\le \delta^6$ with $C\in [0.9,2.4]$ implies that 
\begin{align}\label{estimateforyoninterval}
 y(t)\le \delta^6 \ \text{for} \ t\in (0,\min\{t^*,z^{-1}(\frac{1}{2})\}). 
\end{align}
Estimate \eqref{estimateforyoninterval}, coupled with the equation for $w'$ in \eqref{FirstChange} and the definition of $C$ implies that
\begin{align}\label{estimateforxwoninterval}
 \left|x\right|=\frac{Cy}{\left|1-z\right|}\le 5 \delta^6 \ \text{and} \ \left|w\right|<\frac{1}{4}\ \text{for} \ t\in (0,\min\{t^*,z^{-1}(\frac{1}{2})\}). 
\end{align}
The estimates \eqref{estimateforyoninterval} and \eqref{estimateforxwoninterval} combine with the definition of $t^*$ in \eqref{definitiontstart} 
to show that $t^*>z^{-1}(\frac{1}{2})$. 

Now, on $[z^{-1}(\frac{1}{2}),t^*]$, \eqref{yztmid} gives us $\frac{y'}{y^2}\le 1.5y$ 
and $\frac{(1-z)'}{y^2}\le -0.4(1-z)$.
Therefore, $y(t)\le \delta^6e^{1.5(s-z^{-1}(\frac{1}{2}))}$ and $(1-z(t))\le \frac{1}{2} e^{-0.4(s-z^{-1}(\frac{1}{2}))}$, where $s$ is some function of $t$. 
Combining these estimates gives 
\begin{align}\label{yzestimate}
 y(t)(1-z(t))^{3.75}\le \frac{\delta^6}{10} \ \text{for all} \ t\in  [z^{-1}(\frac{1}{2}),t^*].
\end{align}
We conclude this step with the assertion that $y(t^*)=\frac{1}{9}$. We show this by demonstrating that all of the other inequalities defining $t^*$ in 
\eqref{definitiontstart} are strict at 
time $t^*$ itself. 
On the interval $(0,t^*)$, 
$yz< \frac{1}{4}$, so $-w<\frac{1}{4}$ on $(0,t^*)$ since $-w(0)< \frac{1}{4}$. 
For any $t\in [0,t^*]$ with $(1-z)< \frac{9}{2.4}\delta$, it is clear that $x\le Cy(1-z)<\delta$. 
On the other hand, if $(1-z)\ge \frac{9}{2.4}\delta$, 
then $x\le 2.4 \frac{y(1-z)^{3.75}}{(1-z)^{2.75}}< \delta$ by \eqref{yzestimate}
as well. It is clear from \eqref{yztmid} that $z(t^*)<1$, so it must be the case that $y(t^*)=\frac{1}{9}$. 

\textbf{Step Two: show that $x$ and $E$ are non-negative on $(0,t^*)$.}
We have $x\ge 0$ on $(0,t^*)$ 
by \eqref{FirstChange}, since $z\in (0,1)$ on $(0,t^*)$ and $x(0)\ge 0$. The estimates on $x,z,w$ provided by the definition of $t^*$ in \eqref{definitiontstart} 
then imply that $1-w^2z^2-x^2\ge 0$ on 
$(0,t^*)$; this observation coupled with \eqref{usefulquantities} and $E(0)\ge 0$ implies that $E\ge 0$ here as well.

\textbf{Step Three: find the required point in  $(0,t^*)$.} 
Let $I$ be the connected interval of times ending at $t^*$ so that $y(t)\in [\frac{1}{10},\frac{1}{9}]$. 
We find from \eqref{yzestimate} that $(1-z)\le \delta^{\frac{6}{3.75}}<\frac{\delta}{200}$ in $I$.
The definition of $C$ then implies that $0\le x\le \frac{\delta}{3}$ in $I$ as well. 
The intermediate value theorem also gives us that there is some $t_1\in I$ at which $yz=\frac{1}{10}$.
Finally, we need to demonstrate that $\left|D+1\right|\le \frac{\delta}{3}$ at $t_1$. 
At time $(1-z)^{-1}(\frac{\delta}{100})$ (which is less than $t_1$ by the above estimate for $(1-z)$ on $I$), \eqref{yzestimate} tells us that  
$y\le \frac{\delta^6100^{3.75}}{10\delta^{3.75}}\le \delta^{1.25}$. 
The fact that $y'\le 1.5y^3$ on $[z^{-1}(\frac{1}{2}),t^*]$ implies that the distance between $I$ and $(1-z)^{-1}(\frac{\delta}{100})$ is greater 
than $\frac{1}{100\delta^2}$. 
This large amount of time tells us that $D$ must get quite close to $-1$ by the time we land in $I$. 
Indeed, from \eqref{usefulquantities} and \eqref{yzestimate} we find that 
\begin{align}\label{Dfine}
\left|D+1\right|'\le -\frac{3}{4}\left|D-1\right|+\frac{\delta}{10}
\end{align}
 on $((1-z)^{-1}(\frac{\delta}{100}),t^*)$, while 
 \begin{align}\label{Dcrude}
  \left|D+1\right|'\le -\frac{3}{4}\left|D-1\right|+10
 \end{align}
 holds on $(0,t^*)$. Estimate \eqref{Dfine} tells us that whenever $\left|D+1\right|\ge \frac{\delta}{3}$ and 
 $t\in((1-z)^{-1}(\frac{\delta}{100}),t^*)$ , we have $\left|D+1\right|'\le -\frac{\delta}{10}$. 
 The estimate \eqref{Dcrude} coupled with $\left|D(0)\right|\le 1$ 
 implies that $\left|D\right|\le 15$ at time $(1-z)^{-1}(\frac{\delta}{100})$, so 
 it takes no more than time $\frac{10^5}{\delta}>\frac{1}{100\delta^2}$ to get $\left|D+1\right|\le \frac{\delta}{3}$. Therefore, $\left|D+1\right|\le \frac{\delta}{3}$ 
 is achieved for all times in $I$ (including $t_1$). 
\end{proof}
Now we discuss how making $\delta_1$ large can force our trajectory to be close to the $(0,0,0,0)$ point, in the sense of Lemma \ref{0tofinal}. 
This is achieved with the two lemmas below. 
\begin{lem}
Let $B_3\in (0,10^{-100})$. Suppose our trajectory solution of \eqref{FirstChange} has a point so that 
 $0\le z\le B_3^{12}$, $0\le -w\le B_3$, $E\ge 0$ and 
 $$\{\left|x-x^{(\infty)}(\frac{1}{9})\right|,\left|y-y^{(\infty)}(\frac{1}{9})\right|\}\le B_3^6,$$
 where $(x^{(\infty)},y^{(\infty)})$ is the special Bryant soliton solution discussed in Theorem \ref{BryantSolitonsmalltime} of the Appendix.  
 Then there is a later point on the trajectory at which $0\le -w,x,y,z\le B_3$, $\left|\frac{y}{x}-1\right|\le B_3$, $\left|D\right|\le 1$, 
 and so that $\min\{E,x\}\ge 0$ on the trajectory between these two points.
\end{lem}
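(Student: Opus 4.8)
\emph{Strategy.} The plan is to flow the trajectory forward from the given point, which (as in the proof of Lemma~\ref{0tofinal}) we take to occur at time $0$, and to follow its descent toward the critical point $(0,0,0,0)$. The key observation is that near this point the system \eqref{FirstChange} is a small perturbation of the two-dimensional \emph{Bryant system} $x'=-x+y+x^2y$, $y'=y^2(-x+2x^2y)$ obtained by freezing $w=z=0$; the special solution $(x^{(\infty)},y^{(\infty)})$ of Theorem~\ref{BryantSolitonsmalltime} is the invariant curve of this system descending to the origin with $y/x\to1$, and it attracts nearby $(x,y)$-data at unit rate. Since the hypotheses keep $-w$ and $z$ minuscule, the full four-dimensional trajectory should shadow this curve all the way down. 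I would therefore monitor $\rho:=y/x$, use $y$ itself as a strictly decreasing clock running down to $B_3/2$, keep $z$ and $-w$ negligible, preserve the signs of $x$ and $E$, and bound $|D|$.

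\emph{The bootstrap.} I would set up a maximal-interval argument: let $(0,t^*)$ be the largest interval on which $\rho\in[\tfrac12,\tfrac32]$, $0\le -w\le B_3$, $0\le z\le B_3$ and $y\ge\tfrac{B_3}{2}$. On such an interval $x$ and $y$ are bounded by a fixed small constant (using that $x(0),y(0)$ lie within $B_3^6$ of the small values $x^{(\infty)}(1/9),y^{(\infty)}(1/9)$), so every $w$- or $z$-dependent term of \eqref{FirstChange} and \eqref{usefulquantities} is genuinely tiny. I would then verify, in order: \textbf{(i)} $x>0$ and $E\ge0$ persist, since $x'=y(1-z^2)>0$ at $x=0$ while $\tilde E=0$ and $1-w^2z^2-x^2>0$ at $E=0$ force $E'\ge0$; this already supplies the ``$\min\{E,x\}\ge0$'' clause. \textbf{(ii)} $y'=y^2\big(-x+y(w^2z^2+2x^2+z^2)\big)<0$ with $y'$ of order $-y^3$, so $y$ reaches $\tfrac{B_3}{2}$ in finite (though enormous, of order $B_3^{-2}$) time. \textbf{(iii)} A direct computation from \eqref{FirstChange} gives $\rho'=\rho(1-\rho)-(y^2+\rho xy)+O(B_3^2)+O(y^4)$, so $\rho$ is attracted at rate $\asymp1$ to the slowly-varying value $1-O(y^2)$; with $|\rho(0)-1|$ made far below $\tfrac12$ by the estimate on $y^{(\infty)}(1/9)/x^{(\infty)}(1/9)-1$ from Theorem~\ref{BryantSolitonsmalltime}, this yields $|\rho(t)-1|\le Cy(t)^2+|\rho(0)-1|e^{-ct}$, so $\rho$ stays strictly inside $[\tfrac12,\tfrac32]$ and $|\rho-1|\le B_3$ once $y\le\tfrac{B_3}{2}$.

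\emph{Completion.} Continuing the bootstrap I would show: \textbf{(iv)} $\tfrac{d}{dt}\ln z=xy$, of order $y^2$, and switching the clock to $y$ (so $dt$ is of order $-y^{-3}\,dy$) integrates to $z(t)\le 2z(0)\,y(0)/y(t)\le B_3^{11}$ on $(0,t^*)$; \textbf{(v)} writing $v=-w\ge0$, $v'=-v+yz$ with $yz\le B_3^{11}$, so $v\le B_3$ throughout (indeed $v\le B_3^{11}$ after unit time); \textbf{(vi)} a direct computation gives $D'=-D+\Phi$ with $|\Phi|=O(B_3)$ on $(0,t^*)$, and since $|D(0)|=\big|\tfrac{w(0)}{y(0)}-w(0)^2\big|=O(B_3)$ we get $|D|=O(B_3)<1$. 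Every inequality defining $t^*$ other than $y\ge\tfrac{B_3}{2}$ then holds strictly on $[0,t^*]$, forcing $y(t^*)=\tfrac{B_3}{2}$; reading off $w,x,y,z,\rho,D$ at $t^*$ gives exactly the asserted conclusion, with $\min\{E,x\}\ge0$ on the segment by (i).

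\emph{Main obstacle.} The hard step is \textbf{(iii)}: controlling $\rho$ — equivalently, the distance between the $(x,y)$-trajectory and the invariant Bryant curve — over the very long flow time (of order $B_3^{-2}$) while the $w$- and $z$-driven forcing terms are present. The exponents in the statement ($z\le B_3^{12}$ initially, $B_3^6$ closeness, $B_3$ in the conclusion) are calibrated precisely so that the Gronwall errors accumulated over this long time remain below the output tolerance, and turning that heuristic into honest inequalities is the real content of the proof; it leans on the quantitative near-origin (``small time'') description of the Bryant soliton established in Theorem~\ref{BryantSolitonsmalltime}.
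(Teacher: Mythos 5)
Your strategy — tracking the ratio $\rho = y/x$ and using $y$ as a clock — is a genuinely different organization from the paper, which instead compares $(x,y)$ directly to $(x^{(\infty)},y^{(\infty)})$ via the linearization \eqref{bryantcomparisonestimates} and a piecewise Gronwall argument over $A^*, B^*, C^*$. However, your step \textbf{(iii)} has a real gap that the paper's route is designed to avoid.

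The claim that ``$|\rho(0)-1|$ [is] made far below $\tfrac12$'' is false. By Proposition~\ref{initialestaimtesxy} the Bryant curve satisfies $\tfrac{x}{2}\le y^{(\infty)}\le \tfrac{x}{2}+(1-x)^2$, and combined with \eqref{initialyestimatebryant} this gives $\rho^{(\infty)}(1/9)=y^{(\infty)}(1/9)/x^{(\infty)}(1/9)\approx 0.50$. Since $x(0),y(0)$ are within $B_3^6$ of these values, $\rho(0)\approx \tfrac12$, i.e.\ $|\rho(0)-1|\approx\tfrac12$ — exactly at the edge of your bootstrap interval, and as far from $1$ as your interval allows. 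Worse, the assertion that $\rho$ is ``attracted at rate $\asymp 1$'' does not hold in this regime. A careful computation from \eqref{FirstChange} gives, with $w=z=0$ and using $\rho xy=y^2$, $2\rho x^2y^2=2xy^3$,
\begin{align*}
\rho' \;=\; \rho(1-\rho)-2y^2(1-xy)\;+\;O(z^2),
\end{align*}
so the term $2\rho x^2y^2=2xy^3$ that you dismissed as $O(y^4)$ is in fact $\approx 0.23$ at the starting point (where $x\approx 1$, $y\approx \tfrac12$), and it cancels $\rho(1-\rho)\approx \tfrac14$ against $2y^2(1-xy)\approx \tfrac14$ to leading order. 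The linearized attraction rate of $\rho$ to its slowly-varying equilibrium is $1-2\rho$, which vanishes at $\rho=\tfrac12$; there is no unit-rate contraction near the saddle $(1,\tfrac12)$, and your inequality $|\rho(t)-1|\le Cy(t)^2+|\rho(0)-1|e^{-ct}$ does not follow. (The same near-criticality also undercuts step \textbf{(ii)}: $y'=y^2(-x+2x^2y+\cdots)$ has the parenthesis $\approx 0$ initially, so $y'$ is far smaller than $-y^3$ there, which propagates into the clock-change estimates of step \textbf{(iv)}.)

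This degeneracy near the saddle is exactly what the paper's proof is built to handle: it linearizes around the time-dependent Bryant trajectory rather than around the constant $\rho=1$, controls the quantity $V=\max\{|Y|,2|X-Y|\}$, and explicitly budgets the non-contractive growth on $A^*\cup B^*$ (total length $\le 268$, so at most a factor $e^{536}$) against the initial smallness $V(1/9)\le 4B_3^6$, before exploiting the weak contraction $V'\le -\tfrac{B_3^2}{8}V+16B_3^6$ on the long interval $C^*$. To repair your argument you would need to replace the target $\rho=1$ by the Bryant-curve value $\rho^{(\infty)}$, which effectively reproduces the paper's comparison, or to first pass through the region $y\in[\tfrac13,\tfrac12]$ by a separate saddle-dynamics estimate before running the $\rho\to 1$ bootstrap. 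As written, the proposal does not close.
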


\begin{proof}
This time, we find it convenient to have our $(w,x,y,z)$ solution of \eqref{FirstChange} so that the initial point 
described in the hypothesis of the lemma corresponds to time $\frac{1}{9}$. This is so we can easily compare our solution to the special 
$x^{(\infty)},y^{(\infty)}$ solution. Strictly speaking, these functions $x^{\infty},y^{\infty}$ solve \eqref{bryantfirstchange}, but we now reparametrise 
them so that they solve \eqref{xysystmebryant} instead, with a parametrisation that preserves $\frac{1}{9}$. 

\textbf{Step One: estimates on the limiting solution.}
As discussed in Appendix \ref{BryantAppendix}, we have 
\begin{align}\label{BryantEstimates}
 (x^{(\infty)})'(t)\le 0, \qquad (y^{(\infty)})'(t)\le 0, \qquad \frac{y^{(\infty)}(t)}{x^{(\infty)}(t)}\in [\frac{1}{2},1] \ \text{for all} \ t\in [\frac{1}{9},\infty).
\end{align}
Combining \eqref{BryantEstimates} with \eqref{xysystmebryant} gives 
\begin{align}\label{bryanty'estimates}
 (y^{(\infty)})'\le -(y^{(\infty)})^3(1-4(y^{(\infty)})^2).  
\end{align}
Also note that \eqref{BryantEstimates}, combined with Theorem \ref{BryantSolitonsmalltime} and Proposition \ref{initialestaimtesxy} in Appendix A implies that 
\begin{align}\label{initialyestimatebryant}
\begin{split}
1-0.0375\le x^{(\infty)}(\frac{1}{9})\le 1-0.0331\\
\frac{1}{2}-0.01875\le y^{(\infty)}(\frac{1}{9})\le \frac{1}{2}-0.01545. 
\end{split}
\end{align}
It is handy to note that  
\begin{align*}
 [\frac{1}{9},\infty)\subseteq \overline{A^*\cup B^*\cup C^*\cup D^*}
\end{align*}
with 
\begin{align*}
 A^*=(y^{(\infty)})^{-1}(\frac{1}{3},\frac{1}{2}-0.01545), B^*=(y^{(\infty)})^{-1}(\frac{1}{8},\frac{1}{3}), 
 \ C^*= (y^{(\infty)})^{-1}(\frac{B_3}{2},\frac{1}{8}), \ 
 D^*=(y^{(\infty)})^{-1}(0,\frac{B_3}{2}). 
\end{align*}
By \eqref{bryanty'estimates}, we find that $(y^{(\infty)})'(t)\le -\frac{1}{27}(1-4(y^{(\infty)}(t))^2)$ on $A^*$ so 
that $\left|A^*\right|\le 18$. Also, $(y^{(\infty)})'(t)\le -\frac{1}{2\cdot 8^3}$ on $B^*$ so that $\left|B^*\right|\le 250$. Finally, 
$\left|C^*\right|\le \frac{3}{B_3^2}$, 
because $(y^{(\infty)})'(t)\le -\frac{15 (y^{(\infty)}(t))^3}{16}$ for $t\in C^*$. 

\textbf{Step Two: closeness to the limiting solution.}
Define the positive number $t_0\le \sup C\le 268+\frac{3}{B_3^2}$ so that 
\begin{align}\label{tnodefinition}
 [\frac{1}{9},t_0] \ \text{is the maximal time interval on which} \ z(t)\le B_3^6.
\end{align}
We will now obtain the required estimates by showing that the quantities 
$X=x-x^{(\infty)}$ and $Y=y-y^{(\infty)}$ are small on the interval $[\frac{1}{9},t_0]$. 
By examining \eqref{FirstChange} and \eqref{xysystmebryant}, we compute 
\begin{align}\label{bryantcomparisonestimates}
 \begin{pmatrix}
  X\\
  Y
 \end{pmatrix}'&=
 \begin{pmatrix}
  -1+2x^{(\infty)}y^{(\infty)}&1+(x^{(\infty)})^2\\
  (y^{(\infty)})^2(4x^{(\infty)}y^{(\infty)}-1)&x^{(\infty)}y^{(\infty)}(6x^{(\infty)}y^{(\infty)}-2)
 \end{pmatrix}\begin{pmatrix}
 X\\
 Y
 \end{pmatrix}
 +O(X^2+Y^2)+Z
\end{align}
where $\left|O(X^2+Y^2)\right|_{\infty}\le 100( \left|X\right|^2+\left|Y\right|^2)$ provided $\left|X\right|^2+\left|Y\right|^2\le 1$, 
and $\left|Z\right|_{\infty}\le 4\left|z\right|^2$
provided $x,y,-w,z\in [0,1.1]$. 
We now use \eqref{bryantcomparisonestimates} to obtain smallness of $(X,Y)$ on $(\frac{1}{9},t_0)$.  
Let $V=\max\{\left|Y\right|,2\left|X-Y\right|\}$; \eqref{bryantcomparisonestimates} implies that 
\begin{align*}
 V'&\le -\frac{B_3^2}{8} V+16 B_3^6
\end{align*}
on $C^*$, as long as $\left|V\right|\le B_3^3$, and 
\begin{align*}
 V'&\le 2V+16B_3^6
\end{align*}
on $A^*\cup B^*$, provided $\left|V\right|\le B_3^3$. 
Therefore, since $V(\frac{1}{9})\le  4B_3^6$, we find that 
\begin{align}\label{longtimevestimates}
 V(t)\le B_3^3 \ \text{for all} \ t\in \overline{A^*\cup B^*\cup C^*}\cap [\frac{1}{9},t_0]. 
\end{align}

\textbf{Step Three: concluding estimates.}
We need to check that $x,y,z,\frac{y}{x}-1,w$ are all smaller than $B_3$ at time $t_0$, that $\left|D\right|\le 1$, and that $E,x\ge 0$ on $[\frac{1}{9},t_0]$. 
We claim that $y^{(\infty)}(t_0)=\frac{B_3}{2}$. 
To see this, we use \eqref{FirstChange} to estimate 
\begin{align*}
 \frac{z'}{z}&=xy\\
 &\le (x^{(\infty)}+3B_3^3)(y^{(\infty)}+B_3^3)\\
 &\le 2(y^{(\infty)}+\frac{3B_3^3}{2})^2\le 2.5 (y^{(\infty)})^2
\end{align*}
on $[\frac{1}{9},t_0]$.
Therefore, $z(t)\le B_3^{12}e^{28}$ for all $t\in A^*$, so $t_0\notin \overline{A^*}$. 
On $B^*$, we estimate that $z'\le \frac{z}{2}$, so $z(t)\le B_3^{12}e^{153}\le B_3^{11}$ for all $t\in B^*$ and $t_0\notin \overline{B^*}$. 
On the other hand, \eqref{bryanty'estimates} gives $(y^{\infty})'\le -\frac{15(y^{(\infty)})^3}{16}$ on $C^*$, so that 
\begin{align*}
(y^{(\infty)}(t))^2\le \frac{8}{15(t-(y^{(\infty)})^{-1}(\frac{1}{8}))+512}
\end{align*}
for $t\in \overline{C^*}$. We consequently find 
\begin{align*}
 \frac{z'(t)}{z(t)}\le \frac{20}{15(t-(y^{(\infty)})^{-1}(\frac{1}{8}))+512},
\end{align*}
so that 
$z(t)<B_3^6$ for $t\in C^*$. Therefore, $y^{(\infty)}(t_0)=\frac{B_3}{2}$ as required. 

The smallness of $y^{(\infty)}(t_0)$ and \eqref{longtimevestimates} imply the required 
estimates for $x(t_0),y(t_0)$. The estimate for $z(t_0)$ follows from the definition of $t_0$. 
The estimates for $w$ and $D$ follow immediately from \eqref{longtimevestimates} 
and the equation for $w'$ in \eqref{FirstChange}. Using Proposition \ref{FinalEstff}, we also have
\begin{align*}
\left|\frac{y}{x}-1\right|&\le \left|\frac{y}{x}-\frac{y^{(\infty)}}{x^{(\infty)}}\right|+\left|\frac{y^{(\infty)}}{x^{(\infty)}}-1\right|\\
&\le \frac{\left|yx^{(\infty)}-xy^{(\infty)}\right|}{x^{(\infty)}(x^{(\infty)}-2V)}+\frac{3B_3}{4}\\
&\le B_3,
\end{align*}
 since $V\le B_3^3$, while $x^{(\infty)}\ge y^{(\infty)}=\frac{B_2}{2}$. 
It is clear that $x$ is non-negative on $[\frac{1}{9},t_0]$, since $x^{(\infty)}\ge \frac{B_3}{2}$ and $V\le  B_3^3$. To show that 
$E$ is non-negative on $[\frac{1}{9},t_0]$, note that $1-w^2z^2-x^2$ is non-negative on $[\frac{1}{9},t_0]$ since $x^{(\infty)}\in[\frac{B_3}{2}, 1-0.331]$, 
$V\le B_3^3$, $z\le B_3$ and $w\le B_3$ (by \eqref{FirstChange} and the estimates on $y$ and $w$). The non-negativity of $1-w^2z^2-x^2$ and the fact that 
$E(\frac{1}{9})\ge 0$ implies that $E\ge 0$ on $[\frac{1}{9},t_0]$ by \eqref{usefulquantities}. 

\end{proof}
\begin{lem}
 For each $B_3\in (0,10^{-100})$, choose an aribtrary $\delta_1\ge B_3^{-20}$. Then the solution of \eqref{firstchangebeforescaling} satisfies 
 $0\le z(\frac{1}{9\sqrt{\delta_1}})\le B_3^{12}$, $0\le -w(\frac{1}{9\sqrt{\delta_1}})\le B_3$, and 
 $\{\left|x(\frac{1}{9\sqrt{\delta_1}})-x^{(\infty)}(\frac{1}{9})\right|,\left|y(\frac{1}{9\sqrt{\delta_1}})-y^{(\infty)}(\frac{1}{9})\right|\}\le B_6^4$,
 where $(x^{(\infty)},y^{(\infty)})$ is the Bryant soliton solution discussed in the Appendix. Furthermore, $\min\{E,x\}\ge 0$ on $(0,\frac{1}{9\sqrt{\delta_1}})$. 
\end{lem}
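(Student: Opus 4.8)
The plan is to show that, for $\delta_1$ as large as in the hypothesis, the trajectory of \eqref{firstchangebeforescaling} emanating from the $\mathbb{S}^1$ singular orbit is, on the short interval $\left(0,\tfrac{1}{9\sqrt{\delta_1}}\right)$, a tiny perturbation of the special Bryant soliton solution $(x^{(\infty)},y^{(\infty)})$ of the Appendix, and then to read off the stated estimates. Put $\mu=\tfrac{1}{\sqrt{\delta_1}}$ and introduce the rescaled time $s=\mu^{-1}t$ together with
\begin{align*}
 \hat x(s)=x(\mu s),\qquad \hat y(s)=y(\mu s),\qquad \hat z(s)=\mu^{-1}z(\mu s),\qquad \hat w(s)=\mu^{-1}w(\mu s).
\end{align*}
Substituting into \eqref{firstchangebeforescaling} and keeping track of the powers of $\mu$, one finds $\hat z'=\hat x$, $\hat w'=\tfrac{-\hat w}{\hat y\hat z}-1$, and that $(\hat x,\hat y,\hat z)$ solves the Bryant system \eqref{bryantfirstchange} up to error terms bounded by $C\mu^2$ times polynomials in $(\hat x,\hat y,\hat z,\hat w)$; the point of the choice $\mu^2=\delta_1^{-1}$ is that the $\lambda=1$ terms of \eqref{equationsnewf} enter only at order $\mu^2$ after rescaling, so the formal limit $\mu\to 0$ is the steady ($\lambda=0$) system whose relevant solution is the Bryant soliton of Theorem \ref{BryantSolitonsmalltime}, and under this correspondence $t=\tfrac{1}{9\sqrt{\delta_1}}$ becomes $s=\tfrac19$.

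The first substantive step is to pin down the rescaled data near $s=0$. The expansion \eqref{etaforma}, \eqref{ivpa} (Proposition \ref{mapa}) gives $\hat x\to 1$, $\hat y\to\tfrac12$, $\hat z\sim s$ and $\hat w\sim-\tfrac{s}{3}$ as $s\to 0^+$, and, comparing with the matching Appendix expansion for $(x^{(\infty)},y^{(\infty)})$ near the origin (Proposition \ref{initialestaimtesxy}), the differences $\hat x-x^{(\infty)}$, $\hat y-y^{(\infty)}$ are $O(\mu)$ at some fixed small time $s_0>0$, uniformly in $\delta_1$. This is the delicate part: the coefficients of both systems blow up like $1/s$ as $s\to 0$, so continuous dependence of the trajectory on the parameter $\mu$ near the singular orbit is not automatic and must be extracted by matching enough terms of the two Taylor expansions at $s=0$ — the same mechanism underlying Propositions \ref{mapa} and \ref{mapb}.

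Granting this $O(\mu)$ closeness at $s_0$, the interval $[s_0,\tfrac19]$ is handled by a continuation/Gronwall argument on the regularised systems (the analogue of \eqref{FirstChange} on the Bryant side), whose coefficients are bounded and Lipschitz there. The Appendix provides explicit a priori control of the limit, namely $x^{(\infty)}\in[1-0.0375,1]$, $y^{(\infty)}\in[\tfrac12-0.01875,\tfrac12]$ and $z^{(\infty)}\in(0,\tfrac19]$ by \eqref{BryantEstimates} and \eqref{initialyestimatebryant}, so a bootstrap keeps $(\hat x,\hat y,\hat z)$ in a slightly enlarged box on all of $[s_0,\tfrac19]$ and Gronwall's inequality yields
\begin{align*}
 \max\bigl\{|\hat x-x^{(\infty)}|,\ |\hat y-y^{(\infty)}|,\ |\hat z-z^{(\infty)}|\bigr\}\le\bigl(\text{error at }s_0\bigr)e^{C(\frac19-s_0)}+C'\mu^2\qquad\text{on }[s_0,\tfrac19],
\end{align*}
which is $\le B_3^{4}$ once $s_0$ is fixed small and $\delta_1$ is as large as assumed. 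Undoing the rescaling at $s=\tfrac19$ gives $\bigl|x(\tfrac{1}{9\sqrt{\delta_1}})-x^{(\infty)}(\tfrac19)\bigr|,\ \bigl|y(\tfrac{1}{9\sqrt{\delta_1}})-y^{(\infty)}(\tfrac19)\bigr|\le B_3^{4}$, while $z(\tfrac{1}{9\sqrt{\delta_1}})=\mu\hat z(\tfrac19)$ and $w(\tfrac{1}{9\sqrt{\delta_1}})=\mu\hat w(\tfrac19)$ are $O(\mu)$, hence the former is $\le B_3^{12}$ and the latter lies in $[-B_3,0]$, the sign coming from $\hat w<0$ on $(0,\tfrac19]$ (since $\hat w'=-1<0$ wherever $\hat w=0$ and $\hat w(0)=0$).

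It remains to check $\min\{E,x\}\ge 0$ on $\bigl(0,\tfrac{1}{9\sqrt{\delta_1}}\bigr)$, which follows by the maximum principle exactly as in the second step of the proof of Lemma \ref{0tofinal} and in Proposition \ref{sf2p}: near $t=0$ one has $x=\tfrac{L_2}{R}\to 1^-$, $y=\tfrac{R}{\xi}\to\tfrac12^-$ and $E\to 0^+$; then $x$ cannot cross $0$ downward because its derivative there has the sign of $1-z^2\ge 0$ (as $z<1$ throughout the interval by the $z$-bound just obtained), and, once one knows $x<1$ on the interval (forced by the closeness to the decreasing function $x^{(\infty)}$ together with $y<\tfrac12$), the relation $E'|_{E=0}=xy(1-w^2z^2-x^2)$ from \eqref{usefulquantities} — whose right side is $\ge 0$ since $w,z$ are small and $x<1$ — prevents $E$ from crossing $0$ downward. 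The main obstacle is thus the near-$s=0$ matching of the second paragraph; everything downstream of the fixed small time $s_0$ is a routine if bookkeeping-heavy perturbation estimate, the only further care being to make the Gronwall constant $e^{C(\frac19-s_0)}$ and the $O(\mu^2)$ error explicit enough to beat $B_3^4$ under the assumed lower bound on $\delta_1$.
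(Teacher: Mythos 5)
Your high‑level strategy — rescale by $\mu=\delta_1^{-1/2}$, observe that the $\lambda=1$ terms of \eqref{equationsnewf} enter only at order $\mu^2$ after rescaling, and compare to the Bryant soliton on $[0,\tfrac19]$ — is exactly the paper's, and the algebra (the rescaled $\hat w$, $\hat z$, the $O(\mu^2)$ defect between the rescaled system and \eqref{bryantfirstchange}) is correct. But you have correctly identified the heart of the problem and then not done it. Your proof bottoms out at the sentence asserting that $\hat x-x^{(\infty)}$, $\hat y-y^{(\infty)}$ are $O(\mu)$ "at some fixed small time $s_0>0$, uniformly in $\delta_1$", with the matching of Taylor expansions at the singular point left as a remark. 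That matching is the entire technical content of the lemma: the $1/s$ singular coefficients mean that the uniform‑in‑$\mu$ continuity at $s_0$ is precisely what needs proving, and neither the conclusion nor the order $O(\mu)$ can be read off without it.

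The paper handles the singular interval in a different, and fully explicit, way: it forms $\mathbf{v}=\tilde\eta-\eta^{(\infty)}$ in the $\eta$‑coordinates of \eqref{etaforma}, writes the linearized equation $\mathbf{v}'=\tfrac{A\mathbf{v}}{t}+B(\mathbf{v})-P$ with $\mathbf{v}(0)=0$ and $P=O(p^2)$, and conjugates $A$ by an explicit $S$ to the almost‑diagonal $\mathrm{diag}(-2,-2,-2,1)$. This gives the uniform bound $|\mathbf{v}(t)|_\infty\le 10^6p^2t$ on all of $[0,t^*]$ in a single step — no splitting into $[0,s_0]$ and $[s_0,\tfrac19]$, and no appeal to a separate short‑time matching lemma. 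The diagonalization is not a convenience; it is what turns "continuous dependence near a Fuchsian singularity" into a checkable estimate. If you want to follow your two‑stage route, you would have to reproduce an equivalent of this analysis on $[0,s_0]$, at which point the Gronwall step on $[s_0,\tfrac19]$ becomes redundant.

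One further gap: in the final paragraph you justify $E\ge 0$ by saying that $E'|_{E=0}=xy(1-w^2z^2-x^2)\ge 0$ "since $w,z$ are small and $x<1$". That is not quite enough. Near $t=0$ one has $x\to1$ and $z\to0$ simultaneously, so the sign of $1-w^2z^2-x^2\approx 2(1-x)-w^2z^2$ hinges on the \emph{relative} sizes of $z^2$ and $1-x$. The paper makes this quantitative by showing $z(pt)^2\le\tfrac{1-x(pt)}{10}$, using $z(pt)\le pt$ (Proposition \ref{sf2p}) against the lower bound $1-x^{(\infty)}(t)\ge 2\tan^2(\sqrt{3/2}\,t)$ from Theorem \ref{BryantSolitonsmalltime} together with the closeness of $x$ to $x^{(\infty)}$. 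Your argument needs this comparison spelled out; without it the sign claim is unjustified exactly in the limit $t\to0$ where it matters.

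Finally, a small but relevant point on the boundary condition for $E$: the paper does not start the maximum principle from $E(0^+)=0^+$ but from the computation $\lim_{t\to0}\big(\xi L_2+1-R^2-L_2^2\big)=6\delta_1>0$, which pins down the sign of the numerator of $E$ independently of the indeterminate form $\tfrac{x}{y}+z^2-1-x^2$ at the origin; your "$E\to 0^+$" should be replaced by this.
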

\begin{proof}
Let $p=\frac{1}{\sqrt{\delta_1}}$, $t^*=\frac{1}{9}$, and consider the rescaled functions $\tilde{f}(t)=pf(pt)$ for $f=R,L_1,L_2,\xi$.
Then these new functions satisfy 
\begin{align*}
 \tilde{\xi}'&=-\tilde{L_1}^2-2\tilde{L_2}^2-p^2,\\
 \tilde{L_1}'&=-\tilde{\xi} \tilde{L_1}-p^2,\\
 \tilde{L_2}&=-\tilde{\xi} \tilde{L_2}+\tilde{R}^2-p^2,\\
 \tilde{R}'&=-\tilde{L_2} \tilde{R},
\end{align*}
with the `new' $\delta_1$ equal to $1$, i.e., $\lim_{t\to 0}\frac{\tilde{R}(t)-\frac{1}{t}}{t}=1$. 
The discussion of the Bryant soliton on $\mathbb{R}^3$ discussed in the Appendix implies the existence of
smooth functions $\xi^{(\infty)},L_1^{(\infty)},L_2^{(\infty)},R^{(\infty)}:(0,\infty)\to \mathbb{R}$ of the form \eqref{etaforma} so that 
\begin{align*}
(\xi^{(\infty)})'&=-(L_1^{(\infty)})^2-2(L_2^{(\infty)})^2,\\
(L_1^{(\infty)})'&=-\xi^{(\infty)}L_1^{(\infty)},\\
(L_2^{(\infty)})'&=-\xi^{(\infty)}L_2^{(\infty)}+(R^{(\infty)})^2,\\
 (R^{(\infty)})'&=-L_2^{(\infty)} R^{(\infty)},\\
1&= \lim_{t\to 0}\frac{R^{(\infty)}(t)-\frac{1}{t}}{t}.
\end{align*}
Of course, $L_1^{(\infty)}=0$ uniformly here. 
Recall that by using Theorem \ref{BryantSolitonsmalltime} and Propositon \ref{initialestaimtesxy}, we find that the corresponding 
$x^{(\infty)}$ and $y^{(\infty)}$ functions satisfy \eqref{initialyestimatebryant}.
It is also well-known that $x^{(\infty)}$ and $y^{(\infty)}$ are monotonically decreasing functions. 

We prove this lemma by comparing $(\tilde{\xi},\tilde{L_1},\tilde{L_2},\tilde{R})$ to $(\xi^{(\infty)},L_1^{(\infty)},L_2^{(\infty)},R^{(\infty)})$ on 
the interval $[0,\frac{1}{9}]$. We use the notation $\tilde{\eta}$ and $\eta^{(\infty)}$ to mean the functions from $[0,\infty)$ to $\mathbb{R}^4$ 
consisting of components formed by breaking the corresponding sets of functions according to \eqref{etaforma}. Letting 
$\textbf{v}=\tilde{\eta}-\eta^{(\infty)}$, we find 
\begin{align}\label{vsystem}
 \textbf{v}'(t)=\frac{A\textbf{v}}{t}+B(\textbf{v})-P, \qquad \textbf{v}(0)=0, \qquad \textbf{v}'(0)=(-p^2,-\frac{p^2}{3},0,0), 
\end{align}
where 
\begin{align*}
 A= \begin{pmatrix}
     0&0&-4&0\\
     0&-2&0&0\\
     -1&0&-2&2\\
     0&0&-1&-1
    \end{pmatrix} \qquad 
    P=\begin{pmatrix}
       p^2\\
       p^2\\
       p^2\\
       0
      \end{pmatrix}
\end{align*}
and 
\begin{align*}
 \left|B(\textbf{v})\right|_{\infty}\le 5\left|\textbf{v}\right|_{\infty},
\end{align*}
provided $\left|\tilde{\eta}\right|_{\infty}\le 1$ and $\left|\textbf{v}\right|_{\infty}\le \frac{1}{100}$. One can easily check 
using the results in the Appendix that $\left|\tilde{\eta}\right|_{\infty}\le 1$ on $[0,t^*]$. 

Now we let $\textbf{s}=S^{-1}\textbf{v}$ so that 
\begin{align*}
 \textbf{s}'=\frac{S^{-1}AS\textbf{s}}{t}+S^{-1}B(S\textbf{s})-S^{-1}P, \qquad  \textbf{s}(0)=0, \qquad \textbf{s}'(0)=S^{-1}\textbf{v}'(0),
\end{align*}
where 
\begin{align*}
 S=\begin{pmatrix}
    2&-1&0&8\\
    0&0&1&0\\
    1&-1&0&-2\\
    1&0&0&1
   \end{pmatrix},
\qquad 
 S^{-1}=\begin{pmatrix}
   -\frac{1}{9}&0&\frac{1}{9}&\frac{10}{9}\\
   -\frac{1}{3}&0&-\frac{2}{3}&\frac{4}{3}\\
   0&1&0&0\\
   \frac{1}{9}&0&-\frac{1}{9}&-\frac{1}{9}
   \end{pmatrix}, \qquad S^{-1}AS=\begin{pmatrix}
   -2&1&0&0\\
   0&-2&0&0\\
   0&0&-2&0\\
   0&0&0&1
   \end{pmatrix}.
\end{align*}
This almost-diagonal form of the equations makes it clear that 
\begin{align*}
 \left|\textbf{v}(t)\right|_{\infty}\le 11\left|\textbf{s}(t)\right|_{\infty}\le 10^6p^2t\le \frac{1}{100}
\end{align*}
for all $t\in [0,t^*]$, since $p$ is small. 

The rest of the proof involves using the smallness of $\textbf{v}$ to obtain the estimates discussed in the statement of the lemma. 
First note that 
\begin{align*}
 \left|x(pt)-x^{\infty}(t)\right|&\le \left|\frac{\tilde{L_2}(t)}{\tilde{R}(t)}-\frac{L_2^{(\infty)}(t)}{R^{(\infty)}(t)}\right|\\
 &\le t^2 \left|\left(R^{(\infty)}(t)\tilde{L_2}(t)-L_2^{(\infty)}(t)\tilde{R}(t)\right)\right|\\
 &\le t^2 \left(R^{(\infty)}(t)\left|\tilde{L_2}(t)-L_2^{(\infty)}(t)\right|
 +\left|L_2^{(\infty)}(t)\right|\left|R^{(\infty)}(t)-\tilde{R}(t)\right|\right)\\
 &\le 2\cdot 10^6 p^2 t^2, \qquad t\in [0,\frac{1}{9}]. 
\end{align*}
In this previous computation, we used the estimates on $R^{(\infty)}$ and $L_2^{(\infty)}$ from Theorem \ref{BryantSolitonsmalltime} in the Appendix. 
Similarly, 
\begin{align*}
 \left|y(pt)-y^{\infty}(t)\right|&\le \left|\frac{\tilde{R}(t)}{\tilde{\xi}(t)}-\frac{R^{(\infty)}(t)}{\xi^{(\infty)}(t)}\right|\\
 &\le \frac{1}{\tilde{\xi}(t)\xi^{(\infty)}(t)}\left|\left(R^{(\infty)}(t)\tilde{\xi}(t)-\xi^{(\infty)}(t)\tilde{R}(t)\right)\right|\\
 &\le t^2\left(R^{(\infty)}(t)\left|\tilde{\xi}(t)-\xi^{(\infty)}(t)\right|
 +\xi^{(\infty)}(t)\left|R^{(\infty)}(t)-\tilde{R}(t)\right|\right)\\
 &\le 4\cdot 10^6p^2 t^2, \qquad t\in [0,\frac{1}{9}].
\end{align*}
To check the closeness of $w$ and $z$ to $0$, first note that 
\begin{align}\label{basiczestimate}
0\le z(pt)\le pt
\end{align}
since $\left|R'\right|\le R^2$ by Proposition \ref{sf2p}. Also, the above estimates for $\textbf{v}$ and $\eta^{(\infty)}$ 
imply that $\tilde{\xi}(\frac{1}{9})\ge 0$, so the equation for $\tilde{L}_1$ implies immediately that 
$0\le -\tilde{L}_1'(t)\le p^2$ for each $t\in [0,t^*]$, so we find
\begin{align*}
 0\le -w(pt)=-L_1(pt)=\frac{-\tilde{L}_1(t)}{p}\le pt, \qquad t\in [0,\frac{1}{9}].
\end{align*}

To conclude the proof, we need show that $\min\{x,E\}\ge 0$ on this interval. It is clear that $x\ge 0$. To show that 
$E\ge 0$, it
suffices to show that $\left|z(pt)\right|^2\le \frac{\left|1-x(pt)\right|}{10}$ for all $t\in [0,\frac{1}{9}]$ because of the inequality $0\le-w\le 1$, 
the equation for $E'$ in \eqref{usefulquantities} and the fact that $\lim_{t\to 0}\xi L_2+1-R^2-L_2^2=\eta_0'(0)+1-2\eta_3'(0)=-3\eta_2'(0)=6\delta_1$, 
so that $\xi L_2+1-R^2-L_2^2$ is initially positive. 
From \eqref{basiczestimate}, we have
\begin{align*}
 0\le \frac{z(pt)}{pt}\le 1,
\end{align*}
but we also have 
\begin{align*}
\frac{1-x(pt)}{p^2t^2}&=\frac{1-\tilde{x}(t)}{p^2 t^2}\\
&=\frac{1-x^{(\infty)}(t)}{p^2 t^2}+\frac{x^{(\infty)}(t)-\tilde{x}(pt)}{p^2 t^2}\\
&\ge \frac{2 \tan^2\left(\sqrt{\frac{3}{2}}t\right)}{t^2 p^2}-4\cdot 10^6,
\end{align*}
by Theorem \ref{BryantSolitonsmalltime},
so we obtain the required estimates. 
\end{proof}

\section{Bounds on curvature at the $\mathbb{S}^2$ singular orbit}
By Theorems \ref{S2orbit} and \ref{d1large}, we find that any Ricci soliton must satisfy $\delta_1\in [0,10^{20,000}]$. We now construct bounds on
$\delta_2$ and $\delta_3$. Fortunately, these bounds are simpler to construct, 
and can be found \textit{without} using the (already large) bound on $\delta_1$. The bound on $\delta_2$ is easier to construct once we have a bound on $\delta_3$, 
so we treat $\delta_3$ first. 
\begin{thm}\label{delta3estimate}
 Suppose the metric of the form \eqref{metricform} is a gradient shrinking Ricci soliton with Einstein constant $\lambda=1$. Then $\delta_3\in [0,40]$. 
\end{thm}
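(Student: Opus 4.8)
The plan is to work from the $\mathbb{S}^2$ singular orbit ($t=T$) backwards, using the parametrization \eqref{etaformb} in which $\delta_3 = R(T) = \frac{1}{f_2(T)}$. The lower bound $\delta_3 \ge 0$ is immediate (and already recorded in Proposition \ref{initialbounds}), so the real content is the upper bound $\delta_3 \le 40$. First I would observe that $\delta_3$ large means $R(T)$ large, i.e.\ $f_2(T)$ small, so the $\mathbb{S}^2$ singular orbit is a tiny sphere; the strategy is to show this forces a contradiction with Proposition \ref{sf2p}, which guarantees $f_2' \in [-1,1]$ everywhere, equivalently $R^2 - L_2^2 \ge 0$, equivalently $|L_2| \le R$ on all of $(0,T)$.

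The key steps, in order: (1) Set up the backward evolution near $t=T$; using \eqref{equationsnewf} and \eqref{etaformb}, the equation $R' = -L_2 R$ controls how fast $R$ can grow as we move away from $T$, and since $|L_2|\le R$ we get $|R'| \le R^2$, hence $R$ cannot have decayed too much over a short backward time interval — quantitatively, $\frac{1}{R(t)} \ge \frac{1}{R(T)} - (T-t)$, so $R$ stays comparable to $\delta_3$ as long as $T - t \lesssim 1/\delta_3$. (2) On this same short interval track $L_2$: from $L_2' = -\xi L_2 + R^2 - 1$ and the fact that $L_2(T) = 0$, the dominant term $R^2 - 1 \approx \delta_3^2$ is large and positive, forcing $L_2$ to become large (of size $\sim \delta_3^2 (T-t)$) as we move backward — but then the constraint $|L_2| \le R \sim \delta_3$ is violated once $T - t$ exceeds $\sim 1/\delta_3$, unless $\delta_3$ is bounded. (3) Make the competition between ``$R$ stays $\sim \delta_3$ for time $\sim 1/\delta_3$'' and ``$L_2$ reaches size $\sim \delta_3^2 \cdot (1/\delta_3) = \delta_3$'' quantitatively precise, pinning the crossover and extracting the explicit constant $40$; one needs to control $\xi$ on this interval as well, but $\xi' = -L_1^2 - 2L_2^2 - 1 \le -1$ gives monotonicity and a bound of the form $|\xi(t) - \xi(T)| \le C(T-t)$ together with the smoothness data $-\xi(T-t) \sim \frac{1}{t}$, which is enough since we only run for a short time.

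The main obstacle I expect is Step (2)/(3): one must simultaneously keep $R$ from decaying (so the bound $|L_2|\le R$ does not slacken) and keep $L_2$ from being tamed by the $-\xi L_2$ drift term. Because $-\xi \sim \frac{1}{t}$ blows up as $t \to 0$ (i.e.\ very close to $T$ in the $T-t$ variable), the drift term is actually large and of the ``restoring'' sign right near $t=T$, so one cannot simply say $L_2 \approx (R^2-1)(T-t)$; the honest estimate is more like $L_2(T-t) \approx \frac{(\delta_3^2 - 1)\,t}{3}$ after solving the linear ODE with the $\frac{1}{t}$ coefficient (the factor $\frac13$ coming from integrating against $t^2$, exactly as the singular term $\frac{1}{t}$ with coefficient $1$ in \eqref{etaformb} produces). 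Getting the constant right — balancing $\frac{\delta_3^2 t}{3} \le R(T-t) \le \delta_3$, which holds for $t$ up to roughly $3/\delta_3$, against the independently available lower bound on how long $R$ genuinely stays near $\delta_3$ — is where the numerical value $40$ is forced, and checking the error terms ($L_1$, the nonlinear corrections, the deviation of $R$ from $\delta_3$) are genuinely negligible on this window is the part that requires care rather than cleverness.
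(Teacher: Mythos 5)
Your setup is in the right spirit — work backward from the $\mathbb{S}^2$ orbit, exploit $\left|L_2\right|\le R$ and $\left|R'\right|\le R^2$ from Proposition \ref{sf2p} — but there is a genuine gap at exactly the point you flag as ``the part that requires care rather than cleverness.'' In the equation $L_2'=-\xi L_2+R^2-1$, the drift $-\xi L_2$ is \emph{restoring} near $t=T$ (there $L_2<0$ and $-\xi>0$), so to obtain a lower bound on the growth of $\left|L_2\right|$ you need an \emph{upper} bound on $-\xi$. You only have the leading behaviour $-\xi(T-t)=\frac{1}{t}+\eta_0(t)$, and $\eta_0$ is governed by $\delta_2$, for which no bound is available at this stage (the paper proves the $\delta_2$ bound \emph{after}, and \emph{using}, the $\delta_3$ bound). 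Even if $\eta_0$ could be ignored, the formula $\left|L_2(T-t)\right|\approx\frac{(\delta_3^2-1)t}{c}$ (your $c=3$ should be $c=2$ for the $\frac{1}{t}$ coefficient in \eqref{etaformb}) is an infinitesimal approximation; once $T-t\sim 1/\delta_3$, the coupled decay of $R$ via $\left(1/R\right)'=L_2$ has already dropped $R$ to order $\delta_3/2$ or below, so $R^2-1$ is no longer $\approx\delta_3^2$ and the linear picture for $L_2$ has broken down exactly where you need to close the argument. So the ``balancing'' step is not a routine error estimate — the system $(L_2,R)$ must be treated as coupled and the restoring drift must be tamed, and your plan offers no mechanism for either.

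The paper's proof resolves both difficulties with one substitution: track $L_2-L_1$ rather than $L_2$. Since $L_1'=-\xi L_1-1$, the $-1$'s cancel and $(L_2-L_1)'=-\xi(L_2-L_1)+R^2$, where now $L_2-L_1\ge 0$ and $-\xi\ge 0$, so the singular drift is a \emph{growth} term. Proposition \ref{scf1} gives $-\xi\ge \frac{1}{L_1}-L_1$, and combined with $-L_1\ge L_2-L_1$ (from $L_2\le 0$) and $\frac{L_2-L_1}{L_1}\ge -1$ this yields the Riccati inequality $(L_2-L_1)'\ge (L_2-L_1)^2+R^2-1$, with $R\ge \frac{1}{1/\delta_3+T-t}$ from $\left|R'\right|\le R^2$. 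Finally, the normalised quantity $y=(L_2-L_1)(T-t)$ is trapped in $[0,1]$ (again from $L_1\ge \frac{1}{t-T}$ and $L_2\le 0$), and integrating $y'$ from $T-\frac12$ to $T-\frac1{40}$ gives a number exceeding $1$ when $\delta_3>40$, a contradiction. These two moves — replacing $L_2$ by $L_2-L_1$ so the drift helps rather than hurts, and multiplying by $T-t$ to get an a priori bounded quantity — are the essential content and are absent from your plan. You also omit the preliminary step showing $T>\frac12$ and $L_2<0$ on $(T-\frac12,T)$, which is what makes $R>1$ available and the signs in the Riccati inequality come out correctly.
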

\begin{proof}
We already know from Proposition \ref{initialbounds} that $\delta_3\ge 0$, so suppose for the sake of contradiction that $\delta_3>40$. 
In this case, we claim that 
\begin{align}\label{Tbiggerthan5}
 T>\frac{1}{2} \ \text{and} \ L_2(t)<0 \ \text{for all} \ t\in (T-\frac{1}{2},T).
\end{align}
To verify \eqref{Tbiggerthan5}, note that if $T\le \frac{1}{2}$, then Proposition \ref{sf2p} implies that 
$\left|R'\right|\le R^2$, so that $R>1$ on $(0,T)$. Then the estimate $L_2'=-\xi L_2+R^2-1>-\xi L_2$ violates the boundary conditions 
$\lim_{t\to 0}L_2(t)=\lim_{t\to 0}\xi(t)=+\infty$ and $\lim_{t\to T}L_2(t)=0$, $\lim_{t\to T}\xi(t)=-\infty$. Since $T>\frac{1}{2}$, Proposition \ref{sf2p}
 again implies that 
$R^2-1>0$ on $(T-\frac{1}{2},T)$, so the boundary condtions $L_2(T)=0$, $\lim_{t\to T}\xi(t)=-\infty$ and the inequality $L_2'>-\xi L_2$ together imply 
that $L_2<0$. 

With \eqref{Tbiggerthan5} in hand, we now claim that
\begin{align}\label{l2l1estimates}
 L_2(t)\ge L_1(t)\ge \frac{1}{t-T} \ \text{for all} \ t\in (0,T). 
\end{align}
The second inequality in \eqref{l2l1estimates} is an immediate consequence of the fact that $L_1'\le -L_1^2$ on $(0,T)$ (follows from Proposition \ref{scf1}), 
and $\lim_{t\to T}L_1(t)=-\infty$. 
The first inequality is a consequence of the fact that 
 \begin{align*}
  (L_2-L_1)'&=-\xi(L_2-L_1)+R^2,\\
 \end{align*}
and the observation that $\lim_{t\to 0}(L_2-L_1)(t)=+\infty$. 
In fact, we know that $\xi L_1+1-L_1^2\ge 0$ everywhere (Proposition \ref{scf1} again), so since $L_1\le 0$ everywhere, 
we rearrange to find $-\xi \ge \frac{1}{L_1}-L_1$ so we can estimate further on $(T-\frac{1}{2},T)$:
\begin{align*}
 (L_2-L_1)'&\ge (\frac{1}{L_1}-L_1)(L_2-L_1)+R^2\\
 &=-L_1(L_2-L_1)+\frac{L_2-L_1}{L_1}+R^2\\
 &\ge (L_2-L_1)^2+\frac{L_2-L_1}{L_1}+R^2\\
 &\ge (L_2-L_1)^2+R^2-1\\
 &\ge (L_2-L_1)^2+\frac{1}{(\frac{1}{\delta_3}+T-t)^2}-1
\end{align*}
since $L_2<0$ (follows from \eqref{Tbiggerthan5}) and $R'\le R^2$ (Proposition \ref{sf2p}). Let $y=(L_2-L_1)(T-t)$, so that $y(T-\frac{1}{2})\ge 0$ and $y(t)\le 1$ for all $t\in (T-\frac{1}{2},T)$. But 
we can estimate the evolution of $y$: 
\begin{align*}
 y'&\ge \left((L_2-L_1)^2+\frac{1}{(\frac{1}{\delta_3}+T-t)^2}-1\right)(T-t)-(L_2-L_1)\\
 &= \frac{y^2-y}{(T-t)}+\left(\frac{1}{(\frac{1}{\delta_3}+T-t)^2}-1\right)(T-t)\\
 &\ge -\frac{1}{4(T-t)}+\frac{(T-t)}{(\frac{1}{\delta_3}+T-t)^2}-(T-t),
\end{align*}
since $y^2-y\ge -\frac{1}{4}$ for all $y\in \mathbb{R}$. 
If $\delta_3>40$, we integrate to get the following estimate: 
\begin{align*}
 1&\ge y(T-\frac{1}{40})-y(T-\frac{1}{2})\\
 &=\int_{T-\frac{1}{2}}^{T-\frac{1}{40}}y'(t)dt\\
 &\ge \int_{T-\frac{1}{2}}^{T-\frac{1}{40}} \left(\frac{(T-t)}{(\frac{1}{\delta_3}+T-t)^2}-(T-t)-\frac{1}{4(T-t)}\right)dt\\
 &\ge 1.89 -\frac{1}{8}-\frac{\ln(20)}{4}>1,
\end{align*}
which is a contradiction. 
\end{proof}

\begin{thm}
 Suppose the metric of the form \eqref{metricform} is a gradient shrinking Ricci soliton with Einstein constant $\lambda=1$. Then $\delta_2\in [-1,10^{20,000}]$. 
\end{thm}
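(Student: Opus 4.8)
The plan is to reduce the desired upper bound on $\delta_2$ to an upper bound on the sectional curvature $-f_1''/f_1$ at the $\mathbb{S}^2$ orbit, and to obtain the latter from an integrating-factor identity together with $\delta_3\le 40$ (Theorem \ref{delta3estimate}); the lower bound $\delta_2\ge -1$ is already Proposition \ref{initialbounds}. First I would record that, from \eqref{etaformb}, \eqref{ivpb} and \eqref{equationsnewf}, one has $\eta_1'(0)=-\tfrac{\delta_2+1}{2}$, whence $a(T):=\lim_{t\to T}\big(\xi L_1+1-L_1^2\big)=\tfrac{3(\delta_2+1)}{2}$. Since $\xi L_1+1-L_1^2=-f_1''/f_1\ge 0$ by Proposition \ref{scf1}, this says $\delta_2=\tfrac23 a(T)-1$, so it suffices to bound $a(T)$ from above; and we may assume $a(T)>1$, since otherwise $\delta_2<0$.

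The key step is the following identity. Set $h:=-\xi+L_1$. Then \eqref{equationsnewf} gives $h'=L_1h+2L_2^2$; the asymptotics above give $h(t)=(a(T)-1)(T-t)(1+o(1))$ as $t\to T$; and at the unique orbit with $\xi=0$ we have $h(\xi^{-1}(0))=L_1(\xi^{-1}(0))\le 0$, since $L_1\le 0$ by Proposition \ref{scf1}. As $L_1=(\log f_1)'$, multiplying $h'-L_1h=2L_2^2$ by the integrating factor $1/f_1$ gives $\frac{d}{dt}\big(\tfrac{h}{f_1}\big)=\tfrac{2L_2^2}{f_1}$, and since $f_1(t)=(T-t)(1+o(1))$ near $T$, integrating from $\xi^{-1}(0)$ to $T$ yields
$$a(T)-1=\frac{L_1(\xi^{-1}(0))}{f_1(\xi^{-1}(0))}+\int_{\xi^{-1}(0)}^{T}\frac{2L_2^2}{f_1}\,dt\ \le\ \int_{\xi^{-1}(0)}^{T}\frac{2L_2^2}{f_1}\,dt .$$
Everything now reduces to a uniform upper bound on the right-hand integral.

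To bound the integral I would combine: (i) $T-\xi^{-1}(0)\le \pi/2$, proved as in Lemma \ref{S22} from $y'\ge y^2+1$ for $y:=\min\{-\xi,-L_1\}$ with $y(\xi^{-1}(0))=0$; (ii) $\delta_3\le 40$ together with $|R'|\le R^2$ and $|f_2'|\le 1$ (Proposition \ref{sf2p}), which pins $R$ near $\delta_3$ close to $T$ and, via $|L_2|\le R$ and $L_2(T)=0$, forces $|L_2(T-\tau)|\le C(\delta_3)\,\tau$ for small $\tau$, so the integrand is $O(T-t)$ near $T$; (iii) a lower bound on $f_1$, namely that $-L_1\le \tfrac1{T-t}$ (Proposition \ref{scf1}) makes $\tfrac{f_1}{T-t}$ non-decreasing, so $f_1(t)\ge \tfrac{f_1(0)}{T}(T-t)$, which — with uniform lower bounds on $\xi^{-1}(0)$ and on $f_1(0)$ — controls $\tfrac1{f_1}$ away from $T$; and (iv) uniform control of $R$ (equivalently, a lower bound on $f_2$) on all of $(\xi^{-1}(0),T)$, so that $\tfrac{2L_2^2}{f_1}\le \tfrac2{f_1f_2^2}$ stays integrable there too. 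Assembled, these give $\int_{\xi^{-1}(0)}^{T}\tfrac{2L_2^2}{f_1}\,dt\le \mathcal C_0$ for an explicit $\mathcal C_0$, hence $\delta_2=\tfrac23 a(T)-1\le 10^{20{,}000}$ with vast room to spare.

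The hard part is item (iv) and the uniform lower bounds in (iii): near $T$ the integrand vanishes and $R$ is pinned near $\delta_3\le 40$, but away from $T$ one must still rule out a near-collapse of the $\mathbb{S}^2$ fiber $f_2$ and blow-up of $R$ on $(\xi^{-1}(0),T)$, and pin down uniform lower bounds for $f_1(0)$ and for $\xi^{-1}(0)$. This is precisely where one uses the global structure of the soliton — that it closes up smoothly at \emph{both} singular orbits — rather than only the behavior of \eqref{equationsnewf} near $T$. Since only the crude bound $10^{20{,}000}$ is needed, all of these estimates can be carried out without optimization.
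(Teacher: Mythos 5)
Your reformulation is algebraically correct and quite elegant: with $h=-\xi+L_1$, one indeed has $h'=L_1h+2L_2^2$, hence $\frac{d}{dt}\bigl(h/f_1\bigr)=2L_2^2/f_1$, and the boundary computation $\lim_{t\to T}h/f_1=a(T)-1=\frac{3\delta_2+1}{2}$ together with $h(\xi^{-1}(0))=L_1(\xi^{-1}(0))\le 0$ does reduce the problem to bounding $\int_{\xi^{-1}(0)}^{T}\frac{2L_2^2}{f_1}\,dt$. (In the paper's notation $h/f_1=(1-Y)/f_1'$ with $Y=\xi L_1+1-L_1^2$, so this is a genuine reformulation of the same quantity.)

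However, there is a genuine gap, and it is exactly the one you flag as ``the hard part'': the argument requires uniform lower bounds on $f_1(\xi^{-1}(0))$ (or $f_1(0)$ and $\xi^{-1}(0)$), and these are neither proven by you nor available a priori from the material in the paper. Concavity of $f_1$ (from Proposition \ref{scf1}) only gives the \emph{upper} bound $f_1(t)\le T-t$, not a lower one, and nothing in Propositions \ref{scf1}, \ref{sf2p}, \ref{initialbounds} or Theorem \ref{delta3estimate} prevents $f_1(0)$ from being very small. Your item (iv) is also problematic as written: replacing $L_2^2$ by $1/f_2^2$ throws away the essential decay $|L_2|\lesssim (T-t)$ near $T$, and since $f_1\sim T-t$ there, the resulting bound $\int 2/(f_1 f_2^2)$ is simply divergent. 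So even granting control on $f_2$, item (iv) does not close the estimate.

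The paper's own proof avoids the whole issue. It works directly with $X=\xi-L_1=-h$ and $Y=\xi L_1+1-L_1^2$ (so $Y(T)=\tfrac32(\delta_2+1)$ and $Y(\xi^{-1}(0))\in[0,1]$), and uses $Y'=-XY-2L_1L_2^2$, $X'=L_1X-2L_2^2$. The source term $-2L_1L_2^2$ is then controlled by the pointwise bound $|L_1|\le \frac{1}{T-t}$ (which follows from $L_1'\le -L_1^2$, i.e.\ Proposition \ref{scf1}) combined with $|L_2|\lesssim (T-t)$ (from $K=\sqrt{L_2^2+(R-1)^2}\le 39e$ via Theorem \ref{delta3estimate} and the $L_2$ equation). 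The factor $1/(T-t)$ plays the role that you want $1/f_1$ to play, but unlike $1/f_1$ it is available a priori. A Gronwall estimate on $Y$ over the interval $(\xi^{-1}(0),T)$ of length $\le \pi/2$ then finishes. If you want to keep your integrating-factor identity, the fix is essentially to rewrite $1/f_1(t)=\frac{1}{f_1(\xi^{-1}(0))}e^{-\int L_1}$, absorb the prefactor back into $h(\xi^{-1}(0))/f_1(\xi^{-1}(0))=L_1(\xi^{-1}(0))/f_1(\xi^{-1}(0))\le 0$, and bound the remaining exponential by $|L_1|\le 1/(T-t)$; but at that point you have recovered the paper's $X,Y$ argument.
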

\begin{proof}
Once again consider the non-negative quantity defined by $K(t)^2=L_2(t)^2+(R(t)-1)^2$, and note that 
\begin{align}\label{Kestimateagain}
 K'(t)\ge K(t)\left(-\frac{1}{2}-\max\{0,\xi\}\right).
\end{align}
Also note that 
\begin{align}\label{maxtimetillT}
 T-\xi^{-1}(0)\le 2
\end{align}
because of the estimate $y'\ge y^2+1$, where $y=\min\{-\xi,-L_1\}$ and $y(\xi^{-1}(0))\ge 0$. 
Now Theorem \ref{delta3estimate} tells us that $\lim_{t\to T}K(t)\le 39$, so \eqref{Kestimateagain} combined with \eqref{maxtimetillT} 
implies that 
\begin{align}\label{d2kestimate}
 K(t)\le 39e \ \text{for all} \ t\in (\xi^{-1}(0),T). 
\end{align}
The equation $L_2'=-\xi L_2+R^2-1$ then implies that $\left|L_2(t)\right|\le e(39^2e+2\cdot 39)(T-t)$ for all $t\in 
(\xi^{-1}(0),T)$. Now consider the quantities $X=\xi-L_1$ and $Y=\xi L_1+1-L_1^2$. 
Since $Y\ge 0$ everywhere, we find that $L_1(\xi^{-1}(0))\in [-1,0]$ and $X(\xi^{-1}(0))\in [0,1]$. 
We compute 
\begin{align}\label{xestimated2}
 X'=L_1X-2L_2^2, 
\end{align}
and 
\begin{align*}
 Y'&=L_1'(\xi-L_1)+L_1(\xi'-L_1')\\
 &=(-\xi L_1-1)(\xi-L_1)+L_1(-L_1^2-2L_2^2-1+\xi L_1+1)\\
 &=(-\xi L_1-1+L_1^2)(\xi-L_1)-2L_1L_2^2\\
 &=-XY-2L_1L_2^2.
\end{align*}
Since $L_1\le 0$ and $T-\xi^{-1}(0)\le 2$, we find from \eqref{d2kestimate} and \eqref{xestimated2} that $-X(t)\le 4\cdot (39e)^2$ for all 
$t\in (\xi^{-1}(0),T)$. 
Therefore 
\begin{align*}
 Y'\le 4\cdot (39e)^2 Y+2(39^2e^2+2\cdot 39e)^2(T-t),
\end{align*}
so that $Y(T)$, which coincides with $\frac{3}{2}(\delta_2+1)$, can be no more than $10^{20,000}$.

\end{proof}

\section{Compactness and uniqueness}\label{mainproof}

We summarise the results for the $\mathbb{S}^4$ we have seen so far. 
\begin{thm}\label{deltaboundedsummary}
An $SO(2)\times SO(3)$-invariant gradient shrinking Ricci soliton on $\mathbb{S}^4$ of the form \eqref{metricform} has $\delta_1\in [0,10^{20,000}]$, $\delta_2\in [-1,10^{20,000}]$ 
and $\delta_3\in [0,40]$. 
\end{thm}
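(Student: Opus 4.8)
The plan is straightforward, since Theorem \ref{deltaboundedsummary} is essentially a bookkeeping corollary of the work already done: I would collect the six one-sided bounds and check that the constants fit together. The three lower bounds $\delta_1\ge 0$, $\delta_2\ge -1$, $\delta_3\ge 0$ are exactly Proposition \ref{initialbounds}, which applies to any soliton because the solutions produced by Propositions \ref{mapa} and \ref{mapb} must agree at the orbit $\xi^{-1}(0)$. The upper bound $\delta_3\le 40$ is Theorem \ref{delta3estimate}, and the upper bound $\delta_2\le 10^{20{,}000}$ is the theorem immediately preceding this one; both hold unconditionally for shrinking solitons of the form \eqref{metricform}. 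So the one genuine task is to extract an upper bound on $\delta_1$ from Theorems \ref{S2orbit} and \ref{d1large}.

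For $\delta_1$ I would run a contradiction argument, picking the threshold so the two theorems interlock. Take $\delta=B_2=10^{-146}$, which lies in $(0,10^{-25})$, and suppose a soliton had $\delta_1>\delta^{-120}=10^{17{,}520}$. Theorem \ref{d1large} then gives a principal orbit with $\xi=10$ at which $\left|L_1+\frac{1}{5+\sqrt{26}}\right|\le B_2$, $\left|R-1\right|\le B_2$, $\left|L_2\right|\le B_2$, and with $\mathcal{R}\ge 0$ throughout the region between that orbit and the $\mathbb{S}^1$ singular orbit (so in particular $\mathcal{R}\ge 0$ at the orbit itself). These are precisely the hypotheses of Theorem \ref{S2orbit}, whose conclusion then gives $\mathcal{R}\ge 0$ between that orbit and the $\mathbb{S}^2$ singular orbit as well. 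Hence the soliton has non-negative Riemann curvature operator everywhere on $\mathbb{S}^4$, so by Hamilton's pinching theorem \cite{Hamilton86} (as recalled just before Theorem \ref{d1large}) it must be the round sphere. But the round metric of Einstein constant $1$ has $\delta_1=\frac{1}{18}$, contradicting $\delta_1>10^{17{,}520}$; therefore $\delta_1\le 10^{17{,}520}<10^{20{,}000}$.

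The only point requiring care — and the closest this proof comes to having a ``hard part'' — is the constant arithmetic: one must confirm that the adjustable parameter $\delta$ of Theorem \ref{d1large} can legitimately be set equal to the fixed constant $B_2=10^{-146}$ of Theorem \ref{S2orbit} (it can, since $B_2<10^{-25}$), and that the resulting threshold $B_2^{-120}=10^{17{,}520}$ really does fall below the claimed $10^{20{,}000}$. All of the analytic difficulty has already been absorbed into Theorems \ref{d1large} and \ref{S2orbit}, so once these elementary checks are made the proof is complete.
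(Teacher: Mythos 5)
Your proof is correct and takes essentially the same approach that the paper implicitly follows: the lower bounds come from Proposition \ref{initialbounds}, the $\delta_3$ and $\delta_2$ upper bounds from Theorem \ref{delta3estimate} and the theorem immediately preceding Theorem \ref{deltaboundedsummary}, and the $\delta_1$ upper bound from chaining Theorem \ref{d1large} (with $\delta=B_2=10^{-146}$) into Theorem \ref{S2orbit} and invoking Hamilton's pinching result to reach a contradiction with the round sphere's value $\delta_1=\frac{1}{18}$. The arithmetic check that $B_2^{-120}=10^{17{,}520}\le 10^{20{,}000}$ and that $B_2<10^{-25}$ is exactly the right glue, and in fact yields a slightly sharper $\delta_1$ bound than the one stated.
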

We are now ready to prove the main result of this paper. 
\begin{proof}[Proof of Theorem \ref{CT}]
We assume for the sake of contradiction that there is no such value of $\mathcal{C}>0$. 
Then there is a sequence of $SO(2)\times SO(3)$-invariant solutions $(g,u)$ to \eqref{GRS} with unbounded Riemann curvature, 
unbounded volume, or an injectivity radius shrinking to $0$. 
Theorem \ref{deltaboundedsummary} implies that 
$\delta_1,\delta_2,\delta_3$ are all bounded uniformly, so we can assume that there numbers are all convergent. Propositions \ref{mapa} and \ref{mapb} 
imply that our sequence of solutions converge to another solution. It is clear that the Riemann curvature, volume and injectivity radii all depend continuously on 
the values of $(\delta_1,\delta_2,\delta_3)$, so we obtain a contradiction. 
\end{proof}

With Theorem \ref{CT} in hand, we discuss how one could prove Conjecture \ref{ClT}. First note that by Propositions \ref{mapa} and \ref{mapb}, there 
is a smooth function 
$F:\mathbb{R}^3\to \mathbb{R}^3$ whose zeroes are precisely the Ricci solitons we aim to classify. 
By Theorem \ref{deltaboundedsummary}, there is a compact domain
$\Omega\subset \mathbb{R}^3$ that contains all the zeroes of $F$. 
In fact, we have \textit{explicitly} described this domain. Therefore, Conjecture \ref{CT} would follow with the following steps. 

\begin{enumerate}
\item Find an explicit open neighbourhood $N$ of the canonical metric on $\mathbb{S}^4$ on which no other zeroes of $F$ occur.
This is essentially a quantitative use of the inverse function theorem. 
\item Show numerically that there are no zeroes of $F$ in $\Omega\setminus N$. This could be achieved by finding an upper bound for $\left|dF\right|$ on $\Omega$, discretising the set 
$\Omega\setminus N$ accordingly, and showing that $F$ is sufficiently far away from $0$ at each of these finitely-many points using an appropriate numerical ODE solver. 
\end{enumerate}
We do not pursue these ideas in this paper, primarily because the $\delta_1,\delta_2$ bounds we have found are far too large for the numerics 
described here to provide an answer in a reasonable amount of time. However, we do emphasise that these techniques 
could be used to resolve Conjecture \ref{ClT} in the affirmative in `finite time'.

\section{An $SO(2)\times SO(3)$-invariant ancient solution on $\mathbb{S}^4$}\label{newancient}
The sequence of `almost Ricci solitons' on $\mathbb{S}^4$ we found by making $\delta_1$ large appears to have a pancake shape. In this section, we describe 
a `pancake' $\kappa$-noncollapsed ancient solution to the Ricci flow; it is likely that this is the precise geometric structure that our 
`almost Ricci solitons' are detecting. It is worth noting that by the recent classification result in \cite{Brendle}, 
this $\kappa$-noncollapsed ancient Ricci flow on $\mathbb{S}^4$ cannot be uniformly PIC and 
weakly PIC2. 

We restate 
Theorem \ref{NAS} for convenience. 
\begin{thm}\label{ancientsolution}
 There exists a $\kappa>0$ and a $\kappa$-noncollapsed $SO(2)\times SO(3)$-invariant ancient Ricci flow on $\mathbb{S}^4$ with positive Riemann curvature operator 
 which is not isometric to the round 
 shrinking sphere. The group $SO(2)\times SO(3)$ acts on $\mathbb{S}^4\subseteq \mathbb{R}^5=\mathbb{R}^2\oplus \mathbb{R}^3$ in the obvious way. 
\end{thm}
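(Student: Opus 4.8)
The plan is to construct the ancient solution as a limit of a sequence of Ricci flows on $\mathbb{S}^4$, each emanating from an $SO(2)\times SO(3)$-invariant initial metric, where the family of initial metrics is chosen to be progressively ``flatter'' in the $\mathbb{S}^1$-direction (the pancake shape). Concretely, I would fix a sequence of metrics $g_{0,k}$ of the form \eqref{metricform} that interpolate, as $k\to\infty$, between a rescaled product of the Bryant soliton on $\mathbb{R}^3$ with a very long flat circle and the Gaussian shrinker geometry on $\mathbb{R}^2\times\mathbb{S}^2$; these are exactly the ``almost solitons'' detected by letting $\delta_1\to\infty$ in the earlier sections. The key structural input is that each $g_{0,k}$ can be taken to have \emph{positive} Riemann curvature operator: using the curvature formula \eqref{riemanncurvatureoperator}, positivity amounts to the four pointwise inequalities $-f_1''/f_1>0$, $-f_2''/f_2>0$, $(1-(f_2')^2)/f_2^2>0$, $-f_1'f_2'/(f_1f_2)>0$, which one can arrange by hand for a pancake profile (with $f_1$ concave, $f_2$ concave, $|f_2'|<1$, and $f_1',f_2'$ of opposite sign on the relevant interval). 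Then by Hamilton's results in \cite{Hamilton86}, each flow $g_k(t)$ starting from $g_{0,k}$ preserves positivity of the curvature operator and exists on a maximal interval $[0,T_k)$, shrinking to a round point as $t\to T_k$.

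Next I would run the flow \emph{backwards in time} conceptually by taking a diagonal/limit argument. Translate each flow in time so that a fixed geometric normalization holds at $t=0$ — for instance, normalize so that $\max$ of the scalar curvature equals $1$ at time $0$, or so that the ``width'' of the $\mathbb{S}^1$-factor is fixed. As $k\to\infty$ the existence interval $(-a_k,0]$ of the normalized flow should satisfy $a_k\to\infty$, because the pancake initial data is increasingly close to the scale-invariant Bryant$\times\mathbb{R}$ behavior and so takes an increasingly long (rescaled) time to round off. The Hamilton–Cheeger–Gromov compactness theorem, together with the uniform curvature bounds coming from positivity of the curvature operator plus Hamilton's trace Harnack inequality for ancient solutions with nonnegative curvature operator, should then yield a subsequential limit: a complete $SO(2)\times SO(3)$-invariant Ricci flow $g_\infty(t)$ on $\mathbb{S}^4$ defined for all $t\in(-\infty,0]$, with nonnegative curvature operator. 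Strict positivity for $t<0$ follows from the strong maximum principle (Hamilton's theorem on the holonomy-invariant subspace generated by the image of the curvature operator) unless the solution splits or is flat, which it cannot on $\mathbb{S}^4$; and non-roundness is preserved in the limit because the normalization prevents collapse to a point and the pancake asymmetry (e.g.\ a definite gap between the two sectional curvatures $-f_1''/f_1$ and $(1-(f_2')^2)/f_2^2$ at the center) passes to the limit.

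The remaining point is $\kappa$-noncollapsedness. Since each $g_k(t)$ is a Ricci flow on a fixed closed manifold, Perelman's no-local-collapsing theorem gives a collapsing constant $\kappa_k$ depending on $g_{0,k}$ and $T_k$; the danger is $\kappa_k\to 0$. I would instead get a uniform $\kappa$ directly: the limit $g_\infty(t)$ has nonnegative curvature operator and is an ancient solution, and for ancient solutions with bounded nonnegative curvature one has Perelman's asymptotic volume ratio $=0$ statement and the equivalence (via the reduced volume, or via the fact that $\mathrm{Rm}\ge 0$ ancient solutions are automatically $\kappa$-noncollapsed on all scales when they do not split — this is essentially in Perelman and refined by Chen–Zhu / Ni) of noncollapsing with a lower volume-ratio bound that can be read off from one time slice. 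Alternatively, and more robustly, choose the time-normalization at $t=0$ to pin the volume of a unit ball at a basepoint; then monotonicity of Perelman's reduced volume along each $g_k$ transports this to a uniform $\kappa$ on the limit. I expect this noncollapsing bookkeeping to be the main obstacle: one must be careful that the normalization used to extract the limit is compatible with a \emph{scale-invariant} lower volume bound, so that $\kappa_\infty>0$ rather than merely $\kappa_k>0$ for each $k$. The curvature-operator positivity is what saves the argument, since it forces the asymptotic shrinker at $t\to-\infty$ to be a round cylinder or round sphere quotient, ruling out the collapsed (e.g.\ cigar-type) behavior that would destroy noncollapsing.
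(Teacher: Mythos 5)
Your proposal follows the same overall architecture as the paper's proof (which is itself modelled on the Perelman sausage construction in Chapter~19 of \cite{RicciIII}): take a one-parameter family of pancake-shaped $SO(2)\times SO(3)$-invariant initial metrics with nonnegative curvature operator, flow each one, suitably rescale, and extract a smooth ancient limit via Hamilton--Cheeger--Gromov compactness. You also correctly identify the $\kappa$-noncollapsing as the crux. However, you do not actually resolve it, and the two mechanisms you offer are flawed. The claim that an ancient solution with bounded nonnegative curvature operator that does not split is automatically $\kappa$-noncollapsed is not a theorem; Perelman's asymptotic-volume-ratio statement goes the other way (from noncollapsedness to a vanishing asymptotic volume ratio, not vice versa). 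And the appeal to the asymptotic shrinker ``being a round cylinder or sphere, hence noncollapsed'' is circular: extracting a shrinking soliton as the $t\to -\infty$ blow-down limit already requires the noncollapsing hypothesis you are trying to establish.

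The paper's resolution is to prove a \emph{uniform} $\kappa_0$-noncollapsing bound on the \emph{initial} data $g_0$, independent of the length parameter $L$, before the flow ever starts. This is a concrete geometric comparison: decompose $\mathbb{S}^4 = \overline{A\cup B}$, where $A$ is a fixed-size cap near the $\mathbb{S}^1$ singular orbit that embeds isometrically into a fixed ($L$-independent) warped product $\mathbb{S}^1\times \mathbb{S}^3$, and $B$ is the long cylindrical part that embeds isometrically into the standard $\mathbb{R}^2\times\mathbb{S}^2$; both model spaces have a uniform lower bound on $\mathrm{Vol}(B_{r_0}(x_0))/r_0^4$ for $r_0$ in the relevant compact range of scales fixed by the curvature bound. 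Once the initial metrics are uniformly $\kappa_0$-noncollapsed, Perelman's no-local-collapsing theorem (Theorem~19.52 of \cite{RicciIII}) produces a uniform $\kappa$ for the flows at later times, and this $\kappa$ passes to the limit. Your ``reduced volume'' alternative is in the right spirit but it must be coupled to exactly this kind of initial-time volume estimate to yield a scale-invariant bound.

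One further gap: the assertion that the normalized existence intervals satisfy $a_k\to\infty$ is presented as a heuristic (``the pancake takes a long time to round off''). The paper proves this quantitatively: it uses Hamilton's trace Harnack inequality (after fixing the rescaling so that the curvature pinching is $1+\delta$ at rescaled time $-1$) to bound $\hat{S}(t)$ from above on the whole interval, and then a diameter-derivative estimate forces $\mathrm{diam}(\hat{g}_L(t_0(L)))\ge L/\sqrt{T_L-t_L}$ to be small only if $-t_0(L)$ is large; as $L\to\infty$ this forces $t_0(L)\to -\infty$. Without this you cannot conclude the limit is ancient.
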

The proof of this result is broken up into several steps. Apart from the first step, 
the construction of this ancient Ricci flow solution is almost identical to that of the Perelman ancient `sausage' solution; 
the details are available in Chapter 19 of \cite{RicciIII}. 
\begin{proof}
\textbf{Step One: a sequence of initial Riemannian metrics.}
For each large $L>10$, choose an $SO(2)\times SO(3)$-invariant Riemannian metric $g_0$ on $\mathbb{S}^4$ of the form \eqref{metricform} with $T=L+1$ and 
\begin{align*}
 \tilde{f}_1(r)=\begin{cases}
         L \ &\text{if}  \ 0<r<1\\
         (L+1-r) \ &\text{if}  \ 1\le r<L+1
        \end{cases}, \qquad 
        \tilde{f}_2(r)=\begin{cases}
        \sin(\frac{\pi r}{2}) \ &\text{if} \ 0<r<1\\
        1 \ &\text{if} \ 1\le r<L+1. 
       \end{cases}
\end{align*}
This metric clearly satisfies the smoothness conditions at the singular orbits, but it does fail to be smooth at $r=1$. However, we can mollify the two functions 
$\tilde{f}_1,\tilde{f}_2$ on $(-\frac{1}{2},\frac{3}{2})$ so that the resulting functions $f_1,f_2$ are smooth. 
Recall that the Riemann curvatures of this Riemannian metric are 
\begin{align*}
 -\frac{f_1''}{f_1}, \qquad -\frac{f_2''}{f_2}, \qquad \frac{1-(f_2')^2}{f_2^2}, \qquad \frac{-f_1'f_2'}{f_1f_2};
\end{align*}
it is clear that after mollification, these curvatures are all non-negative. 
Since we mollify on $(\frac{1}{2},\frac{3}{2})$, the quantity $-\frac{f_2''}{f_2}+\frac{1-(f_2')^2}{f_2^2}$ is uniformly bounded from below, independently of 
$L$ because $f_2$ does not depend on $L$ in this region. Also the supremum of all four eigenvalues is uniformly bounded from above ($L$ does not affect the $f_2$ terms, and only 
makes $f_1$ large so the corresponding curvatures can only get smaller). Therefore, 
\begin{align}\label{initialcest}
 \textit{there exists a} \  C>0 \  \textit{so that} \ \frac{1}{C}\le S(g_0)=\left|\mathcal{R}(g_0)\right|\le C \ \textit{for all} \ L>10. 
\end{align}
Now we claim that there is a $\kappa_0>0$ so that $(M,g_0)$ is $\kappa_0$-noncollapsed on all scales $r_0>0$, and for all large $L>10$. To see this, we must show that any geodesic ball $B_{r_0}(x_0)$ on which $S(g_0)\le \frac{1}{r_0^2}$ has volume at least $\kappa_0 r_0^4$. By the lower bound on the scalar curvature \eqref{initialcest}, it suffices to find a $\kappa_0>0$ so that the volume of any geodesic ball $B_{r_0}(x_0)$ is at least $\kappa_0 r_0^4$ whenever $0<r_0^2\le C$.  
To this end, 
take an arbitrary geodesic ball $B_{r_0}(x_0)\subseteq \mathbb{S}^4$ and use the manifold decomposition:
 \begin{align*}
  \mathbb{S}^4=\overline{A\cup B},
 \end{align*}
where $A=(0,\frac{3}{2}]\times \mathbb{S}^1\times \mathbb{S}^2$ and $B=[\frac{3}{2},L+1)\times \mathbb{S}^1\times \mathbb{S}^2$. 
Now for each $L>10$, $B$ is isometrically contained in $\mathbb{R}^2\times \mathbb{S}^2$ equipped 
 with the standard metric, so we have 
\begin{align}\label{gaussiankappa}
\inf \Bigg\{ \  \frac{\text{Vol}(B_{r_0}(x_0))}{r_0^4} \  \vert\  B_{r_0}(x_0)\subseteq B, r_0\in (0,C]\ \Bigg\}\ge \inf \Bigg\{ \  \frac{\text{Vol}(B_{r_0}(x_0))}{r_0^4} \  \vert\  B_{r_0}(x_0)\subseteq \mathbb{R}^2\times \mathbb{S}^2, r_0\in (0,C] \ \Bigg\}.
\end{align}
On the other hand, 
let $\overline{g}$ be an $SO(3)$-invariant Riemannian metric on $\mathbb{S}^3$ found by extending $f_2$ on $(0,\frac{3}{2})$ 
smoothly to a function on $(0,2)$ with the appropriate smoothness conditions at $2$. Also let $\overline{f_1}(r)$ be a 
smooth extension of $f_1:(0,\frac{3}{2})\to \mathbb{R}$ to a function with domain $(0,2)$ and appropriate smoothness conditions at $r=2$. 
Note that $\overline{g}$ is independent of $L$, 
and $A$ is isometrically embedded in the warped product manifold $\mathbb{S}^1\times \mathbb{S}^3$ with metric $(\overline{f_1}(r)^2d\theta^2,\overline{g})$, where $d\theta$ 
 is the standard one-form on $\mathbb{S}^1$. 
 Therefore, 
\begin{align}\label{warpedkappa}
\inf \Bigg\{ \  \frac{\text{Vol}(B_{r_0}(x_0))}{r_0^4} \  \vert\  B_{r_0}(x_0)\subseteq A, r_0\in (0,C] \ \Bigg\}\ge \inf \Bigg\{ \  \frac{\text{Vol}(B_{r_0}(x_0))}{r_0^4} \  \vert\  B_{r_0}(x_0)\subseteq \mathbb{S}^1\times \mathbb{S}^3, r_0\in (0,C] \ \Bigg\}.
\end{align} 
 Now it is well-known that the right hand side of \eqref{gaussiankappa}, which is independent of $L$, is strictly positive. Also, we can arrange it so that $\overline{f_1}(r)\ge 1$ for all $r\in (0,2)$ and $L>10$, so it is also clear that the right hand side of \eqref{warpedkappa} is strictly positive for all $L>10$, and can be bounded from below by a positive number, independently of $L>10$. 
 
Now any other geodesic ball of radius $r_0$ contains a geodesic 
 ball of radius $\frac{r_0}{2}$ which is entirely contained in at least one of $A$ or $B$, so the existence of a $\kappa_0>0$ independent of $L$ follows from \eqref{gaussiankappa} and \eqref{warpedkappa}. 
 
\textbf{Step Two: Ricci flows from the initial metrics.}
Since the Riemann curvature of $g_0$ is non-negative, for each $L>10$ there exists a 
$T_L$ so that the Ricci flow starting at $g_0$ becomes singular for the first time at $T_L$, and with the round sphere 
as a singularity model. By \eqref{initialcest}, there exists a uniform $C'>0$ so that $T_L\in (\frac{1}{C'},C')$ for all $L>10$. Furthermore, by \eqref{initialcest} 
and the fact that each $g_0$ is $\kappa_0$-noncollapsed on all scales, Theorem 19.52 (no local collapsing) of \cite{RicciIII} implies the existence of a $\kappa>0$ 
so that all of the Ricci flows on $(\frac{2T_L}{3},T_L)$ are uniformly $\kappa$-noncollapsed on all scales $r^2<\frac{T_L}{3}$. 
Note that we can apply this theorem because we can uniformly bound $\left|\mathcal{R}\right|$ on the time interval $[0,\frac{1}{16\mathcal{C}}]$ 
independently of $L$; this is a result of the evolution equations for $\mathcal{R}$ discussed in the proof of Proposition \ref{scf1}.

For a given small $\delta>0$, let $t_L$ be the unique time at which
the ratio of largest to smallest Riemann curvature eigenvalues is 
$1+\delta$, and so that the ratio is less than or 
equal to $1+\delta$ on $(t_L,T_L)$. 

\textbf{Step Three: rescaled flows.}
Consider the rescaled Ricci flows 
\begin{align*}
 \hat{g}_L(t)=\frac{1}{T_L-t_L}g_L(T_L+(T_L-t_L)t)
\end{align*}
which start at time $t_0(L)=-\frac{T_L}{T_L-t_l}$, have sectional curvature ratio $1+\delta$ at time $-1$, become singular at time $0$, 
and are uniformly $\kappa$-noncollapsed on $(-\frac{T_L}{3(T_L-t_L)},0)$ for scales $r^2\le \frac{-t_0(L)}{3}$. 

We now show that $\lim_{L\to \infty}t_0(L)=-\infty$. To see this, note that for small enough $\delta>0$, we have 
\begin{align}\label{scalacurvaturebound-1}
 \hat{S}(-1)\le 3,
\end{align}
thanks to the evolution equation 
\begin{align*}
 \frac{\partial \hat{S}}{\partial t}\ge \Delta_{\hat{g}(t)}\hat{S}+\frac{\hat{S}^2}{2}
\end{align*}
and the pinching of $1+\delta$ that we have already established. 
The estimate \eqref{scalacurvaturebound-1} coupled with 
Hamilton's trace Harnack inequality (Corollary 15.3 in \cite{RicciII}) then 
implies that $\hat{S}(t)\le 3\frac{-1-t_0(L)}{t-t_0(L)}$ for $t\in (t_0(L),-1]$. 
Therefore, 
\begin{align*}
 \frac{L}{\sqrt{T_L-t_l}}&\le \text{diam} (\hat{g}_L(t_0(L)))\\
 &= \text{diam} (\hat{g}_L(-1))-\int_{t_0(L)}^{-1}\frac{\partial \text{diam}(\hat{g}_L(t))}{\partial t}dt\\
 &\le \text{diam} (\hat{g}_L(-1))+12\int_{t_0(L)}^{-1}\sqrt{\frac{-1-t_0(L)}{t-t_0(L)}}dt\\
 &=\text{diam} (\hat{g}_L(-1))+24(-1-t_0(L))\\
 &\le 24\frac{T_L}{T_L-t_l}.
\end{align*}
Note that in the last computation, we have used the following facts:
\begin{itemize}
 \item For any $x,y\in \mathbb{S}^4$, $\frac{\partial d(x,y)}{\partial t}\ge -12 \sqrt{\frac{-1-t_0(L)}{t-t_0(L)}}$, which follows from the Ricci curvature upper bound.
 \item By making $\delta$ small, we can force $\hat{g}_L(-1)$ to be arbitrarily close to a round sphere, and the scalar curvature of this particular sphere must be $2$ 
 because the remaining time until blow up is exactly $1$. Therefore, Myer's theorem implies that 
 $\text{diam} (\hat{g}_L(-1))\le 2\sqrt{3}\pi$. 
\end{itemize}
The estimate $\frac{L}{\sqrt{T_L-t_l}}\le 24\frac{T_L}{T_L-t_l}$ implies that $\lim_{L\to \infty}T_L-t_L= 0$ as well, so 
$\lim_{L\to \infty}t_0(L)=-\infty$, because of our uniform estimates on $T_L$ itself. 

\textbf{Step Four: convergence.}
We are now in a position to take limits. Indeed, the uniform $\kappa$-noncollapsing and curvature bounds are enough to get smooth convergence to an ancient 
Ricci flow by Theorem 3.10 in \cite{RicciI}; the ancient Ricci flow must have non-negative (hence positive) Riemann curvature. Since $\delta>0$
was eventually fixed, we can examine the sequence at time $-1$ to find that the ancient Ricci flow solution is on $\mathbb{S}^4$, but it is not the round sphere. 
By following the arguments in Chapter 19 of \cite{RicciI}, we find that 
$SO(2)\times SO(3)$ acts via isometries on this ancient solution on $\mathbb{S}^4\subset \mathbb{R}^5=\mathbb{R}^2\oplus \mathbb{R}^3$ in the obvious way. 
\end{proof}

We conclude this section by observing that, not only is this $SO(2)\times SO(3)$-invariant ancient solution non-round, 
but it is not isometric to Perelman's rotationally-invariant $\kappa$ solution either. 
\begin{prop}\label{so432}
 Let $SO(2)\times SO(3)$ act on $\mathbb{S}^4\subset \mathbb{R}^5=\mathbb{R}^2\oplus \mathbb{R}^3$ in the obvious way. Any $SO(2)\times SO(3)$-invariant continuous 
 function 
 $f:\mathbb{S}^4\to \mathbb{R}$ which is also invariant under an $SO(4)$ rotation group action must be constant. 
\end{prop}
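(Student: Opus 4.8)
The plan is to use the fact that an $SO(2)\times SO(3)$-invariant function on $\mathbb{S}^4$ is really a function of one radial variable, and then to locate a single orbit of the $SO(4)$-action on which that variable already attains every value. Write a point of $\mathbb{S}^4\subset\mathbb{R}^5=\mathbb{R}^2\oplus\mathbb{R}^3$ as $p=(x,y)$ with $|x|^2+|y|^2=1$. Since $SO(2)$ acts transitively on each circle $\{|x|=r\}\subset\mathbb{R}^2$ and $SO(3)$ transitively on each sphere $\{|y|=\rho\}\subset\mathbb{R}^3$, the $SO(2)\times SO(3)$-orbit of $p$ is exactly $\{(x',y')\in\mathbb{S}^4:|x'|=|x|\}$, so any $SO(2)\times SO(3)$-invariant $f$ is constant on these orbits and hence has the form $f(p)=F(|x|)$ for a well-defined function $F:[0,1]\to\mathbb{R}$ (continuous if $f$ is, although I will not need continuity below). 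On the other side, an ``$SO(4)$ rotation group action'' on $\mathbb{S}^4\subset\mathbb{R}^5$ is, after choosing a unit vector $v\in\mathbb{R}^5$ spanning the fixed line, the action fixing $\pm v$ and rotating the orthogonal hyperplane $v^\perp\cong\mathbb{R}^4$; in particular the equator $\Sigma:=\mathbb{S}^4\cap v^\perp$ is a single $SO(4)$-orbit (a $3$-sphere), so $f$ is constant on $\Sigma$.

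The key step is then the claim that the radial function $p=(x,y)\mapsto|x|$ is \emph{surjective} from $\Sigma$ onto $[0,1]$. Granting this, for each $r\in[0,1]$ I pick $p_r\in\Sigma$ whose $\mathbb{R}^2$-component has norm $r$; then $F(r)=f(p_r)=f|_{\Sigma}$ is independent of $r$, so $F$ is constant and therefore so is $f$. To prove the claim, decompose $v=(a,b)$ with $a\in\mathbb{R}^2$, $b\in\mathbb{R}^3$, fix $r\in[0,1]$, and set $r'=\sqrt{1-r^2}$. Choose $x\in\mathbb{R}^2$ with $|x|=r$ lying in $a^\perp$ (a nonzero subspace of $\mathbb{R}^2$ when $a\neq 0$; any $x$ with $|x|=r$ if $a=0$), and choose $y\in\mathbb{R}^3$ with $|y|=r'$ lying in $b^\perp$ (a subspace of $\mathbb{R}^3$ of dimension at least $2$ when $b\neq 0$; any $y$ with $|y|=r'$ if $b=0$). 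Then $(x,y)\in\mathbb{S}^4$ and $\langle(x,y),v\rangle=\langle x,a\rangle+\langle y,b\rangle=0$, so $(x,y)\in\Sigma$ with radial coordinate $r$, as required. Note that the degenerate alignments $a=0$ or $b=0$ are harmless: they only make the choice of $x$ or of $y$ \emph{more} free.

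The one point I would be most careful to justify is the reduction of ``an $SO(4)$ rotation group action on $\mathbb{S}^4\subset\mathbb{R}^5$'' to the stabilizer of a unit vector $v$. This rests on the standard fact that the only faithful $5$-dimensional real orthogonal representation of $SO(4)$ is the standard $4$-dimensional one together with a trivial summand, so that an effective isometric action of $SO(4)$ on $\mathbb{S}^4$ always fixes a line in $\mathbb{R}^5$ and $v$ can be taken to span it. Everything else in the argument is elementary linear algebra. A slicker but less self-contained alternative avoids naming $v$ altogether: decomposing $\mathfrak{so}(5)=\mathfrak{so}(2)\oplus\mathfrak{so}(3)\oplus(\mathbb{R}^2\otimes\mathbb{R}^3)$, the off-diagonal summand is irreducible over $\mathbb{R}$ under $SO(2)\times SO(3)$, so $SO(2)\times SO(3)$ is a maximal connected subgroup of $SO(5)$; an effective $SO(4)$-action has $6$-dimensional (hence strictly larger) image in $SO(5)$, so the subgroup of $SO(5)$ generated by the two actions has identity component $SO(5)$, which acts transitively on $\mathbb{S}^4$, forcing $f$ to be constant.
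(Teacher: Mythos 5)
Your proposal is correct, and it takes a genuinely different route from the paper. The paper's proof is topological: it introduces parametrizing functions $r_1,r_2:\mathbb{S}^4\to[0,1]$ whose level sets are the $SO(4)$- and $SO(2)\times SO(3)$-orbits, shows that $f$ is constant on a neighbourhood of each $SO(4)$-pole (since the $SO(2)\times SO(3)$-saturation of a pole is a connected positive-dimensional set), and then, assuming $f$ is not globally constant, partitions $\mathbb{S}^4$ into three $SO(2)\times SO(3)$-invariant pieces and derives a contradiction because the middle piece would have to be simultaneously homeomorphic to $(a,c)\times\mathbb{S}^1\times\mathbb{S}^2$ and $(\tilde\epsilon_0,\tilde\epsilon_1)\times\mathbb{S}^3$, which have different fundamental groups. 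Your primary argument instead replaces all of this with a single piece of linear algebra: after reducing $f$ to a function $F(\lvert x\rvert)$ of the $\mathbb{R}^2$-radius, you identify a single $SO(4)$-orbit --- the equatorial $3$-sphere $\Sigma=\mathbb{S}^4\cap v^\perp$ --- on which $\lvert x\rvert$ already attains every value in $[0,1]$, by choosing $x\in a^\perp\subset\mathbb{R}^2$ and $y\in b^\perp\subset\mathbb{R}^3$ of the right lengths where $v=(a,b)$. That explicit surjectivity claim immediately forces $F$ to be constant, with no topology needed. The only structural point both arguments share is the implicit reduction of an ``$SO(4)$ rotation group action'' to the stabilizer of a line, which you address explicitly via the classification of faithful $5$-dimensional orthogonal representations of $SO(4)$, while the paper encodes it silently in the assumed orbit structure of $r_1$. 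Your alternative second argument, via maximality of $SO(2)\times SO(3)$ in $SO(5)$ coming from irreducibility of the $\mathbb{R}^2\otimes\mathbb{R}^3$ summand of $\mathfrak{so}(5)$, is also valid and is arguably the cleanest of the three: it shows the two symmetry groups together generate a transitive group, so invariance under both forces constancy without any case analysis at all. Both of your arguments are shorter and more self-contained than the paper's; the paper's topological argument has the mild advantage of not invoking the representation theory needed to pin down the shape of the $SO(4)$ action, but in practice it relies on exactly the same assumed orbit structure, so nothing is really gained there.
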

\begin{proof}
Let $r_1:\mathbb{S}^4\to [0,1]$ be a continuous function which parametrises the $SO(4)$ action, and let $r_2:\mathbb{S}^4\to [0,1]$ be a continuous function which 
parametrises the $SO(2)\times SO(3)$ action. Note that, almost by construction, we have the following:
\begin{itemize}
 \item $r_1^{-1}(0)$ and $r_1^{-1}(1)$ are single points;
 \item $r_2^{-1}(0)$ is a copy of $\mathbb{S}^1$;
 \item $r_2^{-1}(1)$ is a copy of $\mathbb{S}^2$;
 \item $r_1^{-1}(k)$ is a copy of $\mathbb{S}^3$ for each $k\in (0,1)$;
 \item $r_2^{-1}(k)$ is a copy of $\mathbb{S}^1\times \mathbb{S}^2$ for each $k\in (0,1)$;
 \item the function $f$ is constant on the level sets $r_i^{-1}(k)$ for each $i=1,2$ and $k\in [0,1]$. 
\end{itemize}
We claim that there is an $\epsilon>0$ so that $f$ is constant on $r_1^{-1}([0,\epsilon])$ and $r_1^{-1}([\epsilon,1])$. To see this, note that $f$ must be constant on 
the two submanifolds $(SO(2)\times SO(3))\cdot r_1^{-1}(0)$ and $(SO(2)\times SO(3))\cdot r_1^{-1}(1)$. Since these submanifolds are compact, connected 
and at least one-dimensional, their images under 
the continuous function $r_1$ must be non-trivial closed subintervals of $[0,1]$ containing $0$ or $1$, respectively.  
However, since $f$ is constant on the level sets of $r_1$, $f$ must actually be constant on the $r_1$ pre-images of these closed subintervals. 

Choose $\tilde{\epsilon}_0\in (0,1]$ so that $[0,\tilde{\epsilon}_0]$ is the maximal connected subinterval of $[0,1]$ containing $0$ so that 
$f$ is constant on $[0,\tilde{\epsilon}_0]$. Similarly, choose $\tilde{\epsilon}_1\in [0,1)$ so that $[\epsilon_1,1]$ is the maximal connected subinterval of $[0,1]$ 
containing $1$ so that $f$ is constant on $[\epsilon_1,1]$. The argument in the last paragraph shows that $\tilde{\epsilon}_0$ and $\tilde{\epsilon}_1$ both exist. 
If $\tilde{\epsilon}_1\le \tilde{\epsilon}_0$, the proof will be complete, so we assume that $0<\tilde{\epsilon}_0<\tilde{\epsilon}_1<1$. 
Define the pairwise-disjoint connected sets $A=r_1^{-1}([0,\tilde{\epsilon_0}])$, 
$B=r_1^{-1}((\tilde{\epsilon}_0,\tilde{\epsilon}_1))$, $C=r_1^{-1}([\tilde{\epsilon}_1,1])$. 
It is clear that $\mathbb{S}^4=A\cup B\cup C$. 
Also note that both $A$ and $C$ are $SO(2)\times SO(3)$-invariant because any orbit containing 
points both in and out of either of the sets would violate the maximality of $[0,\tilde{\epsilon_0}]$ or $[\tilde{\epsilon}_1,1]$. This implies that 
$B$ is $SO(2)\times SO(3)$-invariant as well. 
Now $r_2(A),r_2(B)$ and $r_2(C)$ are connected subintervals which must cover $[0,1]$; they must be pairwise disjoint 
because $SO(2)\times SO(3)$ orbits stay in exactly one of $A$, $B$ or $C$. 
Since $A$ and $C$ are compact, we find that $r_2(A)$ and $r_2(C)$ are closed, so 
$r_2(B)$ must be of the form $(a,c)$ for some $0<a<c<1$. Therefore, $B$ must be homeomorphic to both $(a,c)\times \mathbb{S}^1\times \mathbb{S}^2$ and 
$(\tilde{\epsilon}_0,\tilde{\epsilon}_1)\times \mathbb{S}^3$, which is a contradiction (these two topological spaces have non-isomorphic fundamental groups).

\end{proof}

\appendix

\section{The Bryant soliton revisited}\label{BryantAppendix}
The Bryant steady soliton is a rotationally-invariant metric on $\mathbb{R}^3$ of the form $g=dt\otimes dt+f(t)^2 Q$, where 
$Q$ is the standard metric on $\mathbb{S}^2$ with Ricci curvature $1$, and $f:(0,\infty)\to (0,\infty)$ is smooth, and can be extended 
to a smooth and odd function on $(-\infty,\infty)$ with $f'(0)=1$. A detailed construction of this metric is given in \cite{RicciI}, 
where the authors also show that the Bryant soliton has 
positive Riemann curvature everywhere. 
If we let $R=\frac{1}{f}$, $L_2=\frac{f'}{f}$ and $\xi=2L_2-u'$, where $u$ is the potential function, then 
\begin{align}
\begin{split}\label{Bryantsolutionnonrescaled}
 \xi'&=-2L_2^2,\\
 L_2'&=-\xi L_2+R^2,\\
 R'&=-L_2 R.
 \end{split}
\end{align}
For any $p>0$, solutions of \eqref{Bryantsolutionnonrescaled} are invariant under the transformation that sends a function $f(t)$ to $pf(pt)$, so 
to uniquely specify the Bryant soliton, we need to prescribe the value $\delta_1:=\lim_{t\to 0}\left(\frac{R(t)-\frac{1}{t}}{t}\right).$

Let $x=\frac{L_2}{R}$, $y=\frac{R}{\xi}$ and $z=\frac{1}{R}$, then 
\begin{align}\label{bryantfirstchange}
\begin{split}
 x'&=\frac{1}{yz}\left(-x+y+yx^2\right),\\
 y'&=\frac{-xy^2+2x^2y^3}{yz},\\
 z'&=x.
 \end{split}
\end{align}
The following facts about the Bryant soliton curve $(x(t),y(t),z(t))$ are well-known (they are also discussed in \cite{RicciI}):
\begin{itemize}
 \item $(x(0),y(0))=(1,\frac{1}{2})$;
 \item  $\lim_{t\to \infty}(x(t),y(t))=(0,0)$;
 \item $x'(t),y'(t)< 0$ for all $t\in (0,\infty)$;
 \item $\lim_{t\to \infty}\frac{y(t)}{x(t)}=1$. 
\end{itemize}
It is therefore clear that there is a function $f:[0,1]\to [0,\frac{1}{2}]$ so that $y(t)=f(x(t))$ along the Bryant soliton, and this function does not 
depend on the choice of $\delta_1>0$ (this parameter only affects how quickly one travels through the curve $y=f(x)$). The following propositions tell us 
some valuable information about this function. 
\begin{prop}\label{initialestaimtesxy}
 The function $f$ satisfies $\frac{x}{2}\le f(x)\le \frac{x}{2}+(1-x)^2$ for all $x\in [\frac{3}{4},1]$. In fact, $\frac{x}{2}\le f(x)$ for all $x\in [0,1]$. 
\end{prop}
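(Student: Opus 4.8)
The plan is to work with the trajectory $(x(t),y(t),z(t))$, $t\in(0,\infty)$, of \eqref{bryantfirstchange} that produces the Bryant soliton. By the facts recalled just before the statement, $x(0^+)=1$, $y(0^+)=\tfrac12$, $x',y'<0$, $(x,y)\to(0,0)$ and $y/x\to1$, so $x\in(0,1)$, $y\in(0,\tfrac12)$ and $z>0$ on $(0,\infty)$, and $y=f(x)$ is exactly the graph of this curve over $x$. Each of the two inequalities will be obtained by choosing a barrier quantity and differentiating it along \eqref{bryantfirstchange}.

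\textbf{Lower bound.} To prove $f(x)\ge x/2$, I would study $\psi:=2y-x$, which tends to $0$ at both ends of $(0,\infty)$ (using the boundary values of $x,y$ and $y/x\to1$). A direct computation from \eqref{bryantfirstchange} gives $\psi'=\tfrac1{yz}P(x,y)$ with $P(x,y)=x-y-x^2y-2xy^2+4x^2y^3$, and $P(x,x/2)=\tfrac x2(1-x^2)^2$. Since $\partial_yP=12x^2y^2-4xy-(1+x^2)$ is an upward-opening parabola in $y$ that is negative at $y=0$ and at $y=x/2$, it is negative throughout $(0,x/2]$, so $P(x,y)\ge P(x,x/2)>0$; hence $\psi'>0$ on the region $\{0<x<1,\ 0<y\le x/2\}$. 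If $\psi<0$ somewhere, then since $\psi\to0$ at both ends it attains a negative interior minimum at some $t_*\in(0,\infty)$, where $\psi'(t_*)=0$; but $(x(t_*),y(t_*))$ lies in that region, forcing $\psi'(t_*)>0$, a contradiction. So $\psi\ge0$, and $f(x)\ge x/2$ on $[0,1]$ by continuity.

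\textbf{Upper bound on $[\tfrac34,1]$.} Here I would use $\phi:=y-\tfrac x2-(1-x)^2$, noting $\phi\to0$ as $t\to0^+$. First the behaviour near $x=1$: the curve $y=f(x)$ is an integral curve of $\tfrac{dy}{dx}=\tfrac{-xy^2+2x^2y^3}{-x+y+yx^2}$ with singular point $(1,\tfrac12)$, and linearising the planar vector field $\dot x=-x+y+yx^2$, $\dot y=-xy^2+2x^2y^3$ (obtained from the first two lines of \eqref{bryantfirstchange} by removing the common factor $(yz)^{-1}$, i.e.\ reparametrising time) shows $(1,\tfrac12)$ is a saddle whose unstable eigendirection has slope $\tfrac12$; computing the unstable manifold to second order gives $f(x)=\tfrac x2+\tfrac25(1-x)^2+O((1-x)^3)$, so $\phi=-\tfrac35(1-x)^2+O((1-x)^3)<0$ for $x$ in a punctured left neighbourhood of $1$. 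To propagate this to $x=\tfrac34$, compute $\phi'=\tfrac1{yz}Q(x,y)$ with $Q(x,y)=-xy^2+2x^2y^3-(y(1+x^2)-x)(2x-\tfrac32)$; substituting $x=1-u$ and $y=\tfrac x2+(1-x)^2$ one finds
\[
Q=\tfrac{u^2}{4}\bigl(-6+8u+23u^2-49u^3+50u^4-28u^5+8u^6\bigr)<0\quad\text{for }u\in(0,\tfrac14],
\]
since the bracket is at most $-6+2+\tfrac{23}{16}+\tfrac{50}{256}+\tfrac{8}{4096}<0$ there. Now on $I=\{t:x(t)\in(\tfrac34,1)\}$: if $\phi>0$ somewhere in $I$, let $(a,b)$ be the component of $\{\phi>0\}\cap I$ containing that point; the expansion near $1$ forces $a\neq0$, hence $\phi(a)=0$, and then $\phi'(a)=\tfrac1{yz}Q(x(a),\tfrac{x(a)}2+(1-x(a))^2)<0$ (as $x(a)>\tfrac34$), contradicting that $(a,b)$ is a component of $\{\phi>0\}$. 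Thus $\phi\le0$ on $I$, i.e.\ $f(x)\le\tfrac x2+(1-x)^2$ on $[\tfrac34,1]$.

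\textbf{Main obstacle.} The lower bound is routine. The difficulty is the upper bound, and precisely the tangency at $(1,\tfrac12)$: there $\phi$, $\phi'$ and the naive comparison all vanish, so one must extract the leading $(1-x)^2$-behaviour, which is exactly the second-order unstable-manifold computation yielding the constant $\tfrac25<1$. One can instead bypass that computation by verifying $Q\le0$ directly on the thin region $\{\tfrac34\le x\le1,\ \tfrac x2+(1-x)^2\le y\le\tfrac12\}$, which also disposes of the case $a=0$; either way the content is an explicit but somewhat delicate polynomial estimate. Everything else — the formulas for $\psi'$ and $Q$, and the signs of the one-variable polynomials on $(0,\tfrac14]$ — is straightforward.
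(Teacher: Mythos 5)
Your proof is correct and follows essentially the same route as the paper's: the same barrier quantities ($\psi=2M_1$ and $\phi=M_2$ in the paper's notation), the same second-order expansion of the unstable manifold at $(1,\tfrac12)$ yielding the coefficient $\tfrac25$, and the same sign check for $M_2'$ along $y=\tfrac x2+(1-x)^2$ on $x\in[\tfrac34,1]$. The only (cosmetic) difference is that for the lower bound you exploit $\psi\to 0$ at both ends together with an interior-minimum argument instead of invoking the near-$x=1$ expansion, and you verify $\partial_y P<0$ on the whole strip $0<y\le x/2$ rather than just evaluating on the curve $y=x/2$.
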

\begin{proof}
Consider the system 
\begin{align}\label{xysystmebryant}
\begin{split}
 x'&=-x+y+yx^2,\\
 y'&=-xy^2+2x^2y^3;
 \end{split}
\end{align}
the critical point $(1,\frac{1}{2})$ has a one-dimensional unstable manifold. The part of the unstable manifold lying in $[0,1]\times [0,\frac{1}{2}]$ consists 
of exactly those points of the form $(x,f(x))$ for $x\in [0,1]$. It therefore suffices to show that any point in the unstable manifold with 
$x\in [\frac{3}{4},1]$ satisfies
\begin{align}\label{bryantyxestimatesinitial}
 \frac{x}{2}\le y\le \frac{x}{2}+(1-x)^2.
\end{align}

Our first step in verifying \eqref{bryantyxestimatesinitial} for $x \in [\frac{3}{4},1]$ is to find an $\epsilon>0$ so that \eqref{bryantyxestimatesinitial} holds 
on $[1-\epsilon,1]$. To find such an $\epsilon>0$, note that $f$ must be smooth close to $x=1$ because it describes an unstable manifold of a hyperbolic critical point of a 
smooth vector field. By looking at the linearisation of \eqref{xysystmebryant} at $(1,\frac{1}{2})$, we see that the unstable manifold 
points in the direction of $(2,1)$, so $f'(1)=\frac{1}{2}$. 
Now the function $f$ also satisfies 
\begin{align}\label{unstablemanifoldequation}
(-x+f(x)+f(x)x^2)f'(x)=-xf(x)^2+2x^2f(x)^3
\end{align}
for $x$ close to $1$. Using the equalities $f(1)=\frac{1}{2}$ and $f'(1)=\frac{1}{2}$, we can write $f(x)=\frac{x}{2}+a(x-1)^2+O(x-1)^3$. Then 
\eqref{unstablemanifoldequation} becomes 
\begin{align*}
\frac{x-1}{2}+(x-1)^2\left(\frac{3}{4}+3a\right) +O(x-1)^3=\frac{x-1}{2}+(x-1)^2\left(\frac{7}{4}+\frac{a}{2}\right)+O(x-1)^3,
\end{align*}
so $a=\frac{2}{5}\in (0,1)$. The existence of the required $\epsilon>0$ follows. 

Consider the quantities $M_1=y-\frac{x}{2}$ and $M_2=y-\frac{x}{2}-(x-1)^2$. 
We find that that $y'=0$ whenever $M_1=0$, so that $M_1'\ge 0$ whenever $M_1=0$ and $x\in [0,1]$. On the other hand, we have 
\begin{align*}
 M_2'=-xy^2+2x^2y^3-(\frac{1}{2}+2(x-1))(-x+y+yx^2),
\end{align*}
which is non-positive whenever $M_2=0$ and $x\in [\frac{3}{4},1]$.
The required estimates follow. 
\end{proof}
\begin{prop}\label{FinalEstff}
 We have $x-x^2\le f(x)\le x$ for $x\in [0,\frac{1}{4}]$. In fact, $f(x)\le x$ for all $x\in [0,1]$
\end{prop}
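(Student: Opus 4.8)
The plan is to argue exactly as in the proof of Proposition \ref{initialestaimtesxy}: the curve $(x,f(x))$ is the trajectory of \eqref{xysystmebryant} through $(1,\tfrac12)$ (traced with $x$ decreasing from $1$ to $0$), and to show it stays on the correct side of an explicit comparison curve one checks the sign of the relevant quantity along this trajectory at its zeros, together with a base case. For the upper bound I would take $N_1=x-y$; along \eqref{xysystmebryant}, at a point with $N_1=0$ one has $x'=x^3$ and $y'=-x^3+2x^5$, so $N_1'=x'-y'=2x^3(1-x^2)\ge 0$, strictly for $x\in(0,1)$. Since $N_1=\tfrac12>0$ at $(1,\tfrac12)$ and $N_1\to0$ as the trajectory limits to $(0,0)$, a first-zero argument (a first zero $t_0$ would force $N_1'(t_0)\le0$, contradicting $N_1'(t_0)>0$ as $x(t_0)\in(0,1)$) gives $f(x)<x$ on $(0,1)$; the endpoints $x=0,1$ are immediate. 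This settles $f(x)\le x$ for all $x\in[0,1]$.

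For the lower bound the relevant quantity is $N_2:=y-x+x^2$, and I want $N_2\ge0$ at all trajectory points with $x\in(0,\tfrac14]$. A computation along \eqref{xysystmebryant} shows that at a point with $N_2=0$ one has $N_2'=y'+(2x-1)x'=x^2 g(x)$, where $g(x)=x(1-x)^2\big(2x^2(1-x)-1\big)+(1-2x)(1-x+x^2)$; for $x\in(0,\tfrac14]$ the first summand is at least $-x\ge-\tfrac14$ while the second is at least $\tfrac12\cdot\tfrac{13}{16}$, so $g\ge\tfrac5{32}>0$ there. Consequently, running the first-zero argument forward in $t$: if $N_2\ge0$ at the time $t_1$ when $x=\tfrac14$, then $N_2\ge0$ for all later times, i.e. for all $x\in(0,\tfrac14]$ (and $N_2=0$ at $x=0$). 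Thus everything reduces to the single boundary estimate $N_2(t_1)\ge0$, i.e. $f(\tfrac14)\ge\tfrac3{16}$. As base input I would use Proposition \ref{initialestaimtesxy}: from $f(x)\ge\tfrac x2$ we get $N_2\ge x^2-\tfrac x2=x(x-\tfrac12)\ge0$ for all $x\in[\tfrac12,1]$, so $N_2\ge0$ already holds at every trajectory point with $x\ge\tfrac12$, and it remains only to carry this down the short interval $x\in[\tfrac14,\tfrac12]$; there the gap between $\tfrac x2$ and $x-x^2$ is at most $\tfrac1{16}$, which I would control either by comparison with a secant line through $(\tfrac12,f(\tfrac12))$ and $(\tfrac14,\tfrac3{16})$ or by feeding $f\ge\tfrac x2$ once through \eqref{xysystmebryant} to bound $f'$ on $[\tfrac14,\tfrac12]$. (One can double-check the whole picture against the local expansion $f(x)=x-2x^3+O(x^4)$ read off \eqref{xysystmebryant} from $f(0)=0$, $f'(0)=1$, which shows $x-x^2<f(x)<x$ near $x=0$ directly.)

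The genuinely hard step is this last one: the polynomial $g$ above is positive on $(0,x^\ast)$ but negative on $(x^\ast,1)$ for some $x^\ast\approx0.42$, so a single global barrier with comparison curve $x-x^2$ does not close the argument on $[\tfrac14,1]$, and one must either chain barriers across $[\tfrac14,1]$ or obtain a quantitative bound on $f'$ on $[\tfrac14,1]$ from the two-sided estimates of Proposition \ref{initialestaimtesxy}. This is the analogue of the Taylor/$\epsilon$-interval step in the proof of Proposition \ref{initialestaimtesxy}, and it is where I expect the bulk of the work (and the only real subtlety) to lie; the remaining sign checks are all elementary polynomial inequalities.
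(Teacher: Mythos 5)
Your upper bound argument (with $N_1=x-y$, i.e.\ the negative of the paper's $M_1=y-x$) is the same as the paper's, and your main lower-bound barrier ($N_2=y-x+x^2$, which is $x M_2$ in the paper's notation, where $M_2=\frac{y}{x}-1+x$) is again essentially the paper's. You have also correctly isolated the one step that remains: a base case giving $N_2\ge 0$ at (or to the right of) $x=\frac14$, since the barrier inequality $N_2'>0$ at $N_2=0$ fails once $x$ exceeds $x^\ast\approx 0.43$, while the input $f\ge x/2$ only gives $N_2\ge 0$ for $x\ge\frac12$. But this step is exactly where your proposal stops: you list possible strategies (a secant line, a bound on $f'$ from the two-sided estimates, chaining barriers) without committing to one or verifying it, and the specific secant you name passes through the unknown value $f(\frac12)$ and the target $\frac3{16}$, so it is not itself a usable barrier. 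That is a genuine gap.

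The paper closes it with one more explicit affine barrier. It first establishes the $M_2$ barrier on $x\in(0,\frac{3}{10}]$ (your $g>0$ range comfortably contains this), so it suffices to know $y>0.21$ at $x=0.3$. For that it introduces $M_3=(y-0.21)+\frac{0.3-x}{5}$, whose zero set is the line $y=0.15+0.2x$, and checks that $M_3'>0$ whenever $M_3=0$ and $x\in[0.3,1]$; since $M_3(1,\tfrac12)=0.15>0$, a first-zero argument forces $M_3>0$ on the whole trajectory down to $x=0.3$, giving $y(0.3)>0.21$ and hence $M_2(0.3)>0$. Concretely, then, the right instantiation of your ``secant line'' idea is not a line through trajectory points, but a fixed affine lower barrier strictly below the trajectory; you would need to choose its coefficients and verify the sign of the derivative of the corresponding defect quantity along the zero set of the barrier, which is the same kind of polynomial sign check you carried out for $g$.
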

\begin{proof}
As before, it suffices to consider the unstable trajectory of the system \eqref{xysystmebryant} which travels from $(1,\frac{1}{2})$ to $(0,0)$. 
The estimate $f(x)\le x$ follows from the fact that $f(1)<1$, and the observation that whenever the quantity $M_1=y-x$ vanishes and $x\in (0,1)$, we have  
\begin{align*}
 M_1'&=xy^2(2xy-1)-M-yx^2\\
 &=2x^3\left(x^2-1\right)<0\\
\end{align*}
so that $M_1\le 0$ is preserved. On the other hand, consider the quantity
$M_2=\frac{y}{x}-1+x$, so that 
\begin{align*}
 M_2'&=\frac{y'x-x'y}{x^2}+x'\\
 &=\frac{(-xy^2+2x^2y^3)x-(-x+y+yx^2)y}{x^2}-x+y+yx^2\\
 &=-2y^2+2xy^3+(1-\frac{y}{x}-yx)\frac{y}{x}-x+y+yx^2\\
 &=-2y^2+2xy^3+(x-M_2-yx)(M_2+1-x)-x+y+yx^2.
\end{align*}
Whenever $y=x-x^2$ and $x\in (0,\frac{3}{10}]$, we find that $M_2'> 0$. It therefore suffices to show that $y>0.21$ when $x=0.3$. 
To show this point, we consider the quantity $M_3=(y-0.21)+\frac{0.3-x}{5}$. 
Then 
\begin{align*}
 M_3'=-xy^2+2x^2y^3+\frac{x-y-yx^2}{5}
\end{align*}
so that whenever $M_3=0$ and $x\in [0.3,1]$, we have $M_3'>0$. Since $M_3>0$ at the point $(1,\frac{1}{2})$, we have that $M_3>0$ when $x=0.3$, i.e., 
our solution curve has $y>0.21$ when $x=0.3$. 
\end{proof}
We conclude this appendix with some short-time estimates on the original functions solving \eqref{bryantfirstchange} with $\delta_1=1$. 
\begin{thm}\label{BryantSolitonsmalltime}
 If $\delta_1=1$, then up until time $\frac{1}{9}$, 
 we have 
 \begin{align*}
  \frac{\sin(\sqrt{6}t)}{\sqrt{6}}&\le z(t)\le t,\\
  1-2 \tan^2\left(\sqrt{\frac{3}{2}}t\right)&\le x(t)\le 1-3t^2e^{-9t^2}.
 \end{align*} 
\end{thm}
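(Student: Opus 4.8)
The plan is to work along the Bryant trajectory, where by the Appendix $y=f(x)$ for the function of Proposition \ref{initialestaimtesxy}. Since $x$ is decreasing with $x(0)=1$ it stays in $(0,1]$, which immediately gives $z(t)=\int_0^t x\le t$, the right-hand inequality for $z$; moreover on $[0,\frac19]$ we will see $x$ never leaves $[\frac34,1]$, so both halves of Proposition \ref{initialestaimtesxy} apply. Writing $R=\frac1z$ and $y=\frac{R}{\xi}$, the system \eqref{bryantfirstchange} becomes $z'=x$ and $x'=R(1+x^2)-\xi x=-\frac{G(x)}{z}$, where $G(x):=\frac{x}{f(x)}-(1+x^2)\ge 0$; from $f(x)\ge\frac x2$ we get $G(x)\le 1-x^2$, and from $f(x)\le\frac x2+(1-x)^2$ we get $G(x)\ge 2(1-x)-\frac{19}{3}(1-x)^2$ for $x\in[\frac34,1]$.

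For the lower bounds on $x$ and $z$, I would introduce the quantity $Q:=6-\frac{1-x^2}{z^2}$. Using the $\delta_1=1$ asymptotics $x=1-3t^2+o(t^2)$, $z=t+o(t)$ near $t=0$ one has $Q(0^+)=0$, while a short computation gives $Q'=\frac{2x}{z^3}\bigl((1-x^2)-G(x)\bigr)=\frac{2x}{z^3}\bigl(2-\frac{x}{f(x)}\bigr)\ge 0$, the last step being just $f(x)\ge\frac x2$. Hence $Q\ge 0$, i.e. $z^2\ge\frac{1-x^2}{6}$, and feeding this into $x'=-\frac{G(x)}{z}\ge-\frac{1-x^2}{z}$ gives $x'\ge-\sqrt{6(1-x^2)}$, i.e. $\frac{d}{dt}\arcsin(x)\ge-\sqrt6$. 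Integrating from $0$ (where $\arcsin x\to\frac\pi2$) yields $x(t)\ge\cos(\sqrt6\,t)$ on $[0,\frac{\pi}{2\sqrt6}]\supseteq[0,\frac19]$; in particular $x\in[\frac34,1]$ there, and the elementary chain $\cos(\sqrt6\,t)\ge 1-3t^2\ge 1-2\tan^2(\sqrt{3/2}\,t)$ gives the claimed lower bound on $x$, while $z(t)=\int_0^t x\ge\int_0^t\cos(\sqrt6\,s)\,ds=\frac{\sin(\sqrt6\,t)}{\sqrt6}$.

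The upper bound $x\le 1-3t^2 e^{-9t^2}$, equivalently $u:=1-x\ge 3t^2 e^{-9t^2}$, needs a short two-stage argument since the barrier is tangent to $x$ to second order at $0$. First, $z\le t$ and $G(x)\ge 2u-\frac{19}{3}u^2$ give $u'\ge\frac{u(2-\frac{19}{3}u)}{t}$; rewriting this in $v=\frac1u$ and integrating (with $\lim_{t\to0}t^2 v=\frac13$) yields the crude estimate $u\ge\frac{3t^2}{1+\frac{19}{2}t^2}\ge 2.6\,t^2$ on $[0,\frac19]$, whence $z=t-\int_0^t u\le t(1-0.8\,t^2)$ and so $R=\frac1z\ge\frac{1+0.8\,t^2}{t}$. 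Second, putting this improved lower bound for $R$ into $u'=R\,G(x)\ge R(2u-\frac{19}{3}u^2)$, one checks that $W:=\ln\frac{u}{t^2}+9t^2$ satisfies $W(0^+)=\ln 3$ and, with $A:=2-\frac{19}{3}u\in[1.76,2]$, $W'=\frac{u'}{u}-\frac2t+18t\ge\frac{A-2}{t}+0.8\,tA+18t$; using the already-proven bound $u\le 3t^2$ to write $\frac{A-2}{t}=-\frac{19u}{3t}\ge-19t$ gives $W'\ge(-19+1.4+18)t>0$. Hence $W\ge\ln 3$, which is exactly $u\ge 3t^2 e^{-9t^2}$.

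The main obstacle is this last step. Because $1-3t^2e^{-9t^2}=1-3t^2+O(t^4)$ agrees with $x=1-3t^2+O(t^4)$ to second order at $t=0$, the differential inequality obtained from the crude bound $z\le t$ is exactly critical — the coefficient of $t^{-1}$ in $(\ln u)'$ is precisely $2$ — and one genuinely needs both the sharpened bound $R\ge\frac{1+ct^2}{t}$ (hence the bootstrap, which in turn requires the preliminary lower bound on $u$) and the exact constant $\frac{19}{3}$ coming from Proposition \ref{initialestaimtesxy}; the margins are small on $[0,\frac19]$, so the bookkeeping must be carried out honestly. By contrast, the lower bounds drop out cleanly from the single monotone quantity $Q$.
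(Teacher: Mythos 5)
Your proof is correct, but it departs from the paper's in two interesting ways.

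\textbf{Lower bounds.} The paper proves $z(t)\ge \frac{\sin(\sqrt6 t)}{\sqrt6}$ by deriving the second-order differential inequality $z''\ge \frac{(z')^2-1}{z}$ and comparing against the one-parameter family $\frac{\sin(\sqrt p t)}{\sqrt p}$ in the $(z,z')$ phase plane; it then feeds the $z$ bound into $x'$ to get $1-x\le 2\tan^2(\sqrt{3/2}\,t)$. You instead identify the monotone quantity $Q=6-\frac{1-x^2}{z^2}$, whose monotonicity follows in one line from $f(x)\ge x/2$, and from $Q\ge 0$ you deduce $\frac{d}{dt}\arcsin x\ge -\sqrt6$, hence $x\ge\cos(\sqrt6 t)$, and then $z\ge\int\cos(\sqrt6 s)\,ds$. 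This is cleaner than the paper's phase-plane comparison, avoids the delicacies of a singular second-order ODE with initial data only determined at third order, and in fact gives a strictly sharper lower bound on $x$ (since $\cos(\sqrt6 t)\ge 1-3t^2\ge 1-2\tan^2(\sqrt{3/2}\,t)$).

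\textbf{Upper bound on $x$.} Here the two proofs differ in what makes the borderline estimate close. The paper verifies algebraically, via the identity $6-27x+46x^2-35x^3+10x^4=(x-1)^2(10x^2-15x+6)$ with $10x^2-15x+6>0$, that $G(x)\ge 2u-6u^2$ exactly (with $u=1-x$). With this sharp coefficient $6$ and $z\le t$, one already gets $(\ln(u/t^2)+9t^2)'\ge 0$ once $u\le 3t^2$, so no bootstrap of the $z$ bound is needed. You use the cruder $G(x)\ge 2u-\frac{19}{3}u^2$ (coming from the linear estimate $\frac{2}{1+s}\ge 2-2s$), for which the same computation yields only $W'\ge -t$; to rescue it you bootstrap $z\le t(1-0.8t^2)$, sharpening $R\ge\frac{1+0.8t^2}{t}$, which supplies the missing $+0.8tA$. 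This works and your bookkeeping checks out, but your closing remark attributes the need for the bootstrap to an intrinsic criticality of the estimate. In fact the paper's sharper $G$ bound (coefficient $6$, not $\frac{19}{3}$) closes the argument with $z\le t$ alone, so the bootstrap is a workaround for the cruder intermediate inequality rather than an unavoidable feature. Both routes are valid; the paper's buys brevity by pushing the algebra into an explicit factorization, yours buys transparency (the $Q$ quantity and the two-stage estimate) at the cost of one extra iteration.
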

\begin{proof}
For the $z$ estimate, note that $z(t)\le t$ follows from curvature positivity of the Bryant soliton. 
For the lower bound, we use Proposition \ref{initialestaimtesxy} to conclude that $y\ge \frac{x}{2}$, so that 
\begin{align*}
 z''&=\frac{1}{yz}\left(-x+y+yx^2\right)\\
 &\ge \frac{x}{yz}\left(-1+\frac{(x^2+1)}{2}\right)\\
 &\ge \frac{2}{z}\left(-1+\frac{(x^2+1)}{2}\right)\\
 &=\frac{(z')^2-1}{z}.
\end{align*}
Now since $R(t)=\frac{1}{t}+t+O(t^3)$, we find that 
$z(0)=0=z''(0)$, and $z'(0)=1$ with $z'''(0)=-6$. We therefore claim that 
\begin{align}\label{zlower}
 z(t)\ge \frac{\sin(\sqrt{6}t)}{\sqrt{6}},
\end{align}
 provided $t\in [0,\frac{1}{9}]$. To see this, note that the solution $\frac{\sin(\sqrt{p}t)}{\sqrt{p}}$ for $\sqrt{p}t\in [0,\frac{\pi}{2}]$ corresponds 
 to a curve in $(z,z')$ space starting at $(0,1)$, with $z$ increasing and $z'$ decreasing to the point $(\frac{1}{\sqrt{p}},0)$. We write this curve as $z'=f_p(z)$ for 
 each $p>0$. 
 The equality $z'''(0)=-6$ then implies that $z'\ge f_6(z)$, 
 as long as $z\in [0,\frac{1}{\sqrt{6}}]$. The solution of $z'=f_6(z)$ is precisely $\frac{\sin(\sqrt{6}t)}{\sqrt{6}}$; estimate \eqref{zlower} 
 follows for all $t$ so that $\sqrt{6}t\in [0,\frac{\pi}{2}]$.

We now move on to the $x$ estimates. Using $z(t)\le t$ and Proposition \ref{initialestaimtesxy}, we find that 
\begin{align*}
 x'&\le \frac{1}{t}\left(\frac{(x-1)(2x^3-x^2+3x-2)}{2\left(\frac{x}{2}+(1-x)^2\right)}\right)\\
 &\le \frac{2(x-1)}{t}\left(1-3(1-x)\right),
\end{align*}
provided $x\in [\frac{3}{4},1]$. Using \eqref{Bryantsolutionnonrescaled} and the fact 
that $R(t)=\frac{1}{t}+t+O(t^3)$, we find that $L_2(t)=\frac{1}{t}-2t+O(t^3)$. One can then verify that $x(0)=1,x'(0)=0,x''(0)=-6$. 
It is clear that $(1-x)\ge x_*(t)$, where $x_*(t)'=\frac{2x_*(t)}{t}(1-3x_*(t))$ with $x_*(0)=x_*'(0)=0$, $x_*''(0)=6$. We can easily show that 
$x^*(t)\le 3t^2$ for all $t\ge 0$, so that 
$x_*(t)'\ge \frac{2x_*(t)}{t}(1-9t^2)$; solving this implies that $(1-x)(t)\ge 3t^2e^{-9t^2}$. 
For the $x$ lower bound, we estimate 
\begin{align*}
 x'&\ge \frac{-x+\frac{x}{2}(1+x^2)}{yz}\\
 &\ge \frac{-x+\frac{x}{2}(1+x^2)}{\frac{xz}{2}}\\
 &=\frac{x^2-1}{z}\\
 &\ge \frac{\sqrt{6}(x^2-1)}{\sin(\sqrt{6}t)}\\
 &\ge \frac{2\sqrt{6}(x-1)}{\sin(\sqrt{6}t)},
\end{align*}
which implies that $(1-x)(t)\le 2 \tan^2(\sqrt{\frac{3}{2}}t)$. 
\end{proof}

\end{document}